\newtheorem{theorem}{Theorem}[section]
\newtheorem{corollary}[theorem]{Corollary}
\newtheorem{lemma}[theorem]{Lemma}
\newtheorem{definition}[theorem]{Definition}
\newtheorem{proposition}[theorem]{Proposition}
\newtheorem{question}[theorem]{Question}
\newtheorem{remark}[theorem]{Remark}
\numberwithin{equation}{section}
\newcommand{\R}{\mathbb{R}}
\newcommand{\Z}{\mathbb{Z}}
\newcommand{\capa}{\text{Cap}}
\newcommand{\F}{\mathcal{F}}
\newcommand{\C}{\mathbb{C}}
\newcommand{\ddc}{dd^c}
\newcommand{\psh}{\text{PSH}(\Omega)}
\newcommand{\pshn}{\text{PSH}^-(\Omega)}
\newcommand{\om}{\omega}
\newcommand{\Om}{\Omega}
\newcommand{\NP}[1]{\textnormal{NP}({dd^c#1})^n}
\newcommand{\NNP}{\mathcal{N}_\textnormal{NP}}
\begin{document}
	\author{Thai Duong Do\textit{$^{1}$}, Hoang-Son Do\textit{$^{2}$}, Van Tu Le\textit{$^{3}$}, Ngoc Thanh Cong Pham\textit{$^{4}$}}
	\address{\textit{$^{1}$} Department of Mathematics, National University of Singapore - 10, Lower Kent Ridge Road - Singapore 119076\footnote[5]{On leave from Institute of Mathematics, Vietnam Academy of Science and Technology}}
	\email{duongdothai.vn@gmail.com}
	\address{\textit{$^{2}$} Institute of Mathematics, Vietnam Academy of Science and Technology, 18, Hoang Quoc Viet, Hanoi, Viet Nam}
	\email{hoangson.do.vn@gmail.com, dhson@math.ac.vn}
	\address{\textit{$^{3}$} Faculty of Mathematics and Informatics, Hanoi University of Science and Technology, No. 1 Dai Co Viet,
		 Hai Ba Trung, Ha Noi, Vietnam}
	\email{tu.levan@hust.edu.vn}
	\address{\textit{$^{4}$} Institute of Mathematics, Vietnam Academy of Science and Technology, 18, Hoang Quoc Viet, Hanoi, Viet Nam}
	\email{cong.pnt.math@gmail.com, phamngocthanhcong1997@gmail.com}
	\title{A Dirichlet type problem for non-pluripolar complex Monge-Amp\`ere equations}
	
	
	\keywords{complex Monge - Amp\`ere equations, pluripolar sets, non-pluripolar measures, model plurisubharmonic functions,
		the comparison principle}.
	\date{\today\\
	 2020	{\it Mathematics Subject Classification.} 32U15, 32W20.}
	\begin{abstract} In this paper, we study  a Dirichlet type problem for the non-pluripolar complex Monge - Amp\`ere equation  with
		 prescribed singularity on a bounded domain of $\C^n$.  We provide a local version for an existence and uniqueness theorem proved by  Darvas, Di Nezza and Lu in \cite{DDL21}.
	Our work also extends a result of \AA hag, Cegrell, Czy\.z and Pham in \cite{ACCP}.
	\end{abstract}

	\maketitle
	\tableofcontents
	
	\section{Introduction}
	Let $\Omega$ be a bounded domain in $\C^n$. For each  smooth plurisubharmonic function $u$ on $\Om$, the complex Monge-Amp\`ere operator of $u$ is defined by 
	$$(dd^c u)^n =C_n \det(Hu) dV,$$ 
	where  $Hu$ is the complex Hessian of  $u$, $dV$ is the standard volume form and  $C_n>0$ is a constant depending only on $n$.

	Bedford and Taylor \cite{BT76, BT82}  have extended the concept of the complex Monge-Amp\`ere  operator for bounded plurisubharmonic function, whereby $(dd^c u)^n$ is a Radon measure satisfying the following property: If $u_j$ is a sequence of smooth plurisubharmonic functions decreasing to $u$ then $(dd^c u_j)^n$ converges weakly to $(dd^c u)^n$.  The set $\mathcal{D}(\Om)$ of plurisubharmonic functions whose Monge-Amp\`ere operator can be defined as above is called the domain of definition of  Monge-Amp\`ere operator. The characteristics of  the domain of definition of  Monge-Amp\`ere operator  were  studied by Cegrell \cite{Ceg04} and Blocki \cite{Blo06}.
	When $\Om$ is a hyperconvex domain, the class $\pshn\cap\mathcal{D} (\Om)$ is also denoted by
	$\mathcal{E}(\Om)$.
	
	By the comparison principle \cite{BT82}, a bounded plurisubharmonic function is uniquely determined by its Monge-Amp\`ere operator
	and its boundary behavior. Here, we say that $u$ and $v$ have the same boundary behavior if $\lim_{z\to\partial\Om}(u-v)(z)=0$. 
	In particular, the Dirichlet problem
	\begin{equation}\label{Dirichletbounded}
	\begin{cases}
	u\in\psh\cap L^{\infty}(\Om),\\
	(dd^cu)^n=\mu,\\
	\lim_{z\to z_0}u(z)=\varphi (z_0), z_0\in\partial\Om,
	\end{cases}
	\end{equation}
	has at most one solution for every Radon measure $\mu$ on $\Om$ and for every bounded function $\varphi: \partial\Om\rightarrow\R$.
	When $\Om$ is strongly pseudoconvex and $\mu=f dV$ ($f\in L^p(\Om), p>1$), the problem
	\eqref{Dirichletbounded} has a unique continuous (resp., H\"older continuous) solution, provided that $\varphi$ is
	continuous (resp., H\"older continuous) \cite{Kol98, GKZ08, BKPZ16}.  
	
	A generalized Dirichlet problem for Monge-Amp\`ere equation in the class $\mathcal{D}(\Om)$ has been studied
	in \cite{ACCP} (see also \cite{Ceg98, Ceg04, Ahag07}). Assume that $\Omega$ is a bounded hyperconvex domain
	and $H\in\mathcal{E}(\Om)$ is a maximal plurisubharmonic function. By \cite{ACCP}, the Dirichlet problem
	\begin{equation}\label{DirichletCegrell}
		\begin{cases}
			u\in\mathcal{N}(H),\\
			(dd^cu)^n=\mu,
		\end{cases}
	\end{equation}
	has a solution, provided that there exists $\underline{u}\in\mathcal{N}(\Om)$ such that
	$(dd^c \underline{u})^n\geq\mu$. Here, one defines by $\mathcal{N}(\Om)$ the set of functions $v\in\mathcal{E}(\Om)$ with smallest maximal plurisubharmonic majorant identically zero, and
$$\mathcal{N}(H):=\left\{w\in PSH^-(\Om): v+H\leq w\leq H\,\mbox{ for some }\, v\in\mathcal{N}(\Om) \right\}.$$
 It is shown in an example that the problem \eqref{DirichletCegrell} may have many solutions  \cite[page 591]{Zer97}.
However, if $\mu$ vanishes on pluripolar set then the solution to  \eqref{DirichletCegrell} is unique \cite[Theorem 3.7]{ACCP}. 
 It is easy to see that $\limsup_{\Om\setminus N\ni z\to \partial\Om}(u-H)(z)=0$ for every $u\in\mathcal{N}(H)$, 
 where $N=\{H=-\infty\}$, but the converse is not true: if the  condition ``$u\in\mathcal{N}(H)$'' is replaced by
 {\it ``$\limsup_{\Om\setminus N\ni z\to \partial\Om}(u-H)(z)=0$''} then the uniqueness of solution is lost even in the case where $\mu=0$ and $H=0$
 (see \cite{BPT09, BST21}).

	In \cite{BT87}, Bedford and Taylor have studied the plurifine topology, which was first introduced by Fuglede in \cite{Fu86} as the weakest topology in which all plurisubharmonic functions are continuous, and defined the non-pluripolar complex Monge-Amp\`ere measure (also known as ``the non-pluripolar part of Monge-Amp\`ere operator'') {\it for every plurisubharmonic function}. If  $u$ is a negative plurisubharmonic function then  its non-pluripolar Monge-Amp\`ere measure $\NP{u}$  is defined as the limit of the sequence of measures $\mathbb{1}_{\{u>-M\}}(dd^c \max\{u, -M\})^n$ as $M\rightarrow\infty$.  This measure is a Borel measure which puts no mass on pluripolar subsets and may have a locally unbounded mass. If $u$ is a negative plurisubharmonic function belonging to the domain of the complex Monge-Amp\`ere operator then  $\NP{u}= \mathbb{1}_{\{u>-\infty\}}(\ddc u)^n$ (see \cite{BGZ09}). 
	
	The idea behind the definition of the non-pluripolar complex Monge-Amp\`ere measure in the local setting  has been adapted to the case of K\"ahler manifold  \cite{GZ07, BEGZ10}. 
	Consider a complex compact K\"ahler manifold $(X,\omega)$ and let $\theta$ be a closed smooth real $(1,1)$-form  on $X$ such that its cohomology class $\alpha$ is big.
	In \cite{BEGZ10}, Boucksom, Eyssidieux, Guedj and Zeriahi have defined the non-pluripolar complex Monge-Amp\`ere measure $(\theta+dd^cu)^n$ for every $\theta$-plurisubharmonic function $u$. The value of $\int_X\theta_u^n$ is bounded by the volume
	of  $\alpha$ and depends on the singularity of $u$.
	In \cite{DDL18, DDL21}, Darvas, Di Nezza and Lu have studied the complex Monge-Amp\`ere equation with prescribed singularity type:
	\begin{equation}\label{MA}
	\begin{cases}
	(\theta+dd^cu)^n=f\omega^n,\\
	[u]=[\phi],
	\end{cases}
	\end{equation}
	where $\phi$ is a given $\theta$-plurisubharmonic function  and $f\geq0$ is a $L^p$ function $(p>1)$
	satisfying $\int_Xf\om^n=\int_X\theta_{\phi}^n>0$. The notation $[u]=[\phi]$ means that $u$ and $v$ have the
	same singularity type, i.e., $u=v+O(1)$. Darvas, Di Nezza and Lu have introduced the notion of the model potential, the model-type singularity and shown that this equation is well-posed only for potentials $\phi$ with model type singularities, i.e., $[\phi]=[P_{\theta}[\phi]]$,
	where 
	$$P_{\theta}[\phi]=(\sup\{\psi\in PSH(X, \theta): \psi\leq 0, \psi\leq \phi+O(1) \})^*.$$
	They have also emphasized that requiring $\phi$ to be a model potential is not only sufficient, but also a necessary condition for the solvability of (\ref{MA}) for every choice of $f$. Furthermore, they have shown the existence and uniqueness (up to a constant) of solution to the following problem, which is a general form of \eqref{MA} (see \cite[Theorem 4.7]{DDL21}):
	\begin{equation}
	\begin{cases}
	(\theta+dd^cu)^n=\mu,\\
	P_{\theta}[u]=\phi,
	\end{cases}
	\end{equation}
	where $\phi=P_{\theta}[\phi]$ is a model $\theta$-plurisubharmonic function and $\mu$ is a non-pluripolar positive Radon measure on $X$
	satisfying $\int_X\theta_{\phi}^n=\int_Xd\mu>0$. Here, we say that a  measure $\mu$ is  non-pluripolar if it vanishes on every pluripolar
	set. Roughly speaking, the result of Darvas, Di Nezza and Lu tells us that
	 a $\theta$-plurisubharmonic function $u$
	with $\int_X \theta_u^n>0$ is completely determined through $\theta_u^n$, $\sup_X u$ and $P_{\theta}[u]$. The condition on $P_{\theta}[u]$ can be regarded as a mild condition on the singularity of $u$: if $u$ and $v$ have  the same singularity type then 
	$P_{\theta}[u]=P_{\theta}[v]$. We have the following very natural question:
	\begin{question}
		Is every plurisubharmonic function $u\in\pshn$ completely determined through its non-pluripolar Monge-Amp\`ere measure and some mild conditions	on its singularity and boundary behavior? 
	\end{question}
	Inspired of \cite{DDL21},
	we say that a function $u\in\pshn$ is  model if $u=P[u],$ where
	\begin{multline*}
	P[u]=\big(\sup\{v\in\pshn:\ v\leq u+ O(1) \text{ on }\Omega,\ \liminf\limits_{\Omega\setminus N\ni z\rightarrow\xi_0}(u(z) -v(z))\geq0\ \forall\xi_0\in\partial\Omega \}\big)^*,
	\end{multline*}
	and $N=\{u=-\infty\}$. 
	A negative plurisubharmonic function $\phi$ is model iff $\NP{\phi}=0$. Moreover, if $u$ is a negative plurisubharmonic function then the smallest model plurisubharmonic majorant of $u$ is $P[u]$. We refer the reader to Theorem \ref{the: model} below
	for more details.
	
	In this paper, we study the existence and uniqueness of solution to the following Dirichlet type problem for the non-pluripolar
	Monge-Amp\`ere equation 
	\begin{equation}\label{NPMA}
	\begin{cases}
	\NP{u}=\mu,\\
	P[u]=\phi,
	\end{cases}
	\end{equation}
	where $\mu$ is a non-pluripolar positive Borel measure on $\Om$ and $\phi$ is a model plurisubharmonic function on $\Om$.
	
	Denote by $\mathcal{N}_{NP}(\Om)$ (or $\mathcal{N}_{NP}$ for short) the set of negative plurisubharmonic functions $u$ on $\Om$ with smallest model plurisubharmonic majorant
	identically zero (i.e., $P[u]=0$). The main result of this paper is as follows:
	
	\begin{theorem}\label{main}
		Assume that there exists $v\in\pshn$ such that $\NP{v}\geq\mu$ and $P[v]=\phi$. Denote 
	$$S=\{w\in\pshn: w\leq  \phi, \NP{w}\geq\mu\}.$$
	Then $u_S:=(\sup\{w: w\in S\})^*$ is a solution of the problem \eqref{NPMA}. Moreover, if there exists $\psi\in\NNP$ such that 
	$\NP{\psi}\geq\mu$ then $u_S$ is the unique solution of \eqref{NPMA}.
	\end{theorem}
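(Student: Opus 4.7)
My plan is to adapt the Perron method to the non-pluripolar Monge--Amp\`ere setting. First I would verify that $S$ is non-empty by showing $v \in S$: from the definition of $P[\,\cdot\,]$, every $w \in \pshn$ is itself a candidate in the envelope defining $P[w]$ (take the $O(1)$ constant equal to $0$), so $w \leq P[w]$. Applied to $v$: $v \leq P[v] = \phi$, hence $v \in S$. Consequently $u_S \geq v$ is not identically $-\infty$, and as the regularized supremum of functions dominated by $\phi \leq 0$, we have $u_S \in \pshn$ with $u_S \leq \phi$.

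Next I would establish $\NP{u_S} = \mu$. For $\NP{u_S} \geq \mu$, I would invoke the plurifine locality of $\NP{\,\cdot\,}$ to show that $S$ is closed under pairwise maxima, then apply a Choquet-type argument to extract an increasing sequence $(w_j) \subset S$ with $(\sup_j w_j)^* = u_S$, and pass to the limit using the convergence theorem for non-pluripolar Monge--Amp\`ere measures along increasing sequences. For the reverse inequality $\NP{u_S} \leq \mu$, I would run a local balayage: on a small ball $B \subset\subset \Omega$, solve the local Dirichlet problem with measure $\mu|_B$ and boundary data $\max(u_S, -M)|_{\partial B}$ via Ko\l odziej \cite{Kol98}, glue with $u_S$ outside $B$, and let $M \to \infty$. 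The glued function should lie in $S$ and dominate $u_S$, so maximality forces equality on $B$, giving $\NP{u_S} = \mu$ there; a covering argument then yields the global identity. I expect this reverse inequality to be the main obstacle, since one must ensure that each local perturbation stays below $\phi$ and preserves the singularity envelope $P[\,\cdot\,] = \phi$; the characterization $\NP{\phi} = 0$ for model $\phi$ together with the plurifine cut-offs $\{u_S > -M\}$ should be what makes this work.

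For the identity $P[u_S] = \phi$, the monotonicity $u \leq u' \Rightarrow P[u] \leq P[u']$ follows directly from the defining envelope, and the sandwich $v \leq u_S \leq \phi$ then gives $\phi = P[v] \leq P[u_S] \leq P[\phi] = \phi$, using that $\phi$ is model.

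Finally, for uniqueness, any other solution $u'$ lies in $S$ (since $u' \leq P[u'] = \phi$ and $\NP{u'} = \mu$), hence $u' \leq u_S$. For the reverse direction, I would invoke a comparison-principle argument in the spirit of \cite[Theorem 4.7]{DDL21}, transplanted to the local setting: the auxiliary $\psi \in \NNP$ with $\NP{\psi} \geq \mu$ controls the total non-pluripolar mass of $\mu$ and the boundary behavior of the difference $u_S - u'$, and combined with $P[u_S] = P[u'] = \phi$ and $\NP{u_S} = \NP{u'} = \mu$ it should force $u_S = u'$ by testing on the plurifine open set $\{u' < u_S - \varepsilon\}$ and letting $\varepsilon \to 0^+$.
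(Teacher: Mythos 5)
Your overall strategy mirrors the paper's: Perron envelope, closure of $S$ under maxima with Choquet plus a convergence theorem to get $\NP{u_S}\geq\mu$, local balayage for the reverse inequality, monotonicity of $P[\cdot]$ for the singularity type, and a comparison principle for uniqueness. The first and third points you execute essentially as in the paper (your Choquet argument is the paper's Lemma \ref{cor envelop}; your monotonicity sandwich is exactly the paper's opening paragraph). But the core obstacle you flag as the ``main'' one---the reverse inequality $\NP{u_S}\leq\mu$---is left to a hopeful remark, and that is where a genuine gap sits.

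The difficulty is concrete: when you solve on a ball $B$ with boundary data $\max(u_S,-M)|_{\partial B}$ and glue, the glued competitor equals some balayage $w_B$ inside $B$, dominated by the smallest maximal majorant $u_B$ of $u_S$ on $B$. For the glued function to lie in your $S$ you need it to stay $\leq\phi$ everywhere, hence $w_B\leq\phi$ on $B$. But $\NP{\phi}=0$ does \emph{not} make $\phi$ maximal on $B$ when $\phi$ is unbounded (e.g.\ $\phi=\log|z|$), so $u_B$---and thus $w_B$---can exceed $\phi$ in the interior. The paper avoids this by never working with $S$ directly in the balayage. Instead it introduces the relaxed families $S_{j,k}=\{w\leq\phi\text{ on }\Om\setminus U_k,\ \NP{w}\geq\mathbb 1_{U_j}\mu\}$, whose envelopes $u_{j,k}$ are unconstrained on the plurifine open set $U_k$; the balayage (Theorem \ref{the: NPMA eq on plurifine open}) gives $\mathbb 1_{U_k}\NP{u_{j,k}}=\mathbb 1_{U_j}\mu$, and only \emph{afterwards}, in the iterated decreasing limit, does one verify (using that $\phi$ is model) that the limit is $\leq\phi$ and hence agrees with $u_S$. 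That two-stage reduction, plus the approximation $H_k=\max\{H,-k\}$ for unbounded model majorants in Theorem \ref{the NPMA of envelope}, is precisely the mechanism that makes your ``plurifine cut-offs should make this work'' into an actual argument, and it is absent from your proposal.

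Two smaller points. First, Ko\l odziej's theorem in \cite{Kol98} is for continuous boundary data; $\max(u_S,-M)|_{\partial B}$ is only u.s.c., so the paper instead invokes the Cegrell/\AA hag framework (Lemma \ref{lemAhag}, built on \cite[Theorem 3.4]{Ahag07}). Second, on uniqueness: your observation that any solution $u'$ lies in $S$ and hence $u'\leq u_S$ is correct and is the easy half, but your sketch for $u_S\leq u'$ cannot directly run a comparison on $\{u'<u_S-\varepsilon\}$ because the boundary condition for the Xing-type principle (Theorem \ref{lem: compa 1}) requires either a $\liminf$ bound at $\partial\Om$ or membership in $\NNP(\phi)$. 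The paper first shows every solution $\mathfrak u$ satisfies $\phi+\psi\leq\mathfrak u\leq\phi$ (via a second envelope approximation $\mathfrak u_j$ and Corollary \ref{cor: comparison 1}), and only then applies the strong comparison Corollary \ref{cor: comparison wrt NPMA}; you would need to supply this intermediate $\NNP(\phi)$-membership step.
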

	
	\vspace{0.2cm}
	We stress that in the above theorem, $\phi$ does not necessarily to belong in the class $\mathcal{D}(\Om)$ and $\Omega$ 
	does not need to be hyperconvex. 
	In the case where $\Om$ is hyperconvex, Cegrell has shown that if $\mu$ is a non-pluripolar positive Radon measure on $\Om$ with
	$\mu(\Om)<\infty$ then there exists a unique function $u\in\mathcal{F}$ satisfying $(dd^cu)^n=\mu$ (see
	\cite[Lemma 5.14]{Ceg04}). Here, the class $\mathcal{F}$
	is defined as in \cite[Definition 4.6]{Ceg04}. Actually, $\mathcal{F}(\Om)$ is the set
	of all the functions in $\mathcal{D}(\Om)$ with smallest maximal plurisubharmonic majorant
	identically zero and with finite total Monge-Amp\`ere mass (see, for example, \cite[page 17]{DD21}). By Remark \ref{rmkF} below,
	if $u\in\mathcal{F}$ and $(dd^cu)^n$ vanishes on pluripolar sets then $u\in\NNP$.
	
	Using Theorem \ref{main} and \cite[Lemma 5.14]{Ceg04}, we obtain immediately the following result which can be seen as a local version
	of \cite[Theorem 4.7]{DDL21}:
	\begin{corollary}
		Assume that $\Omega$ is hyperconvex and $\mu$ is a non-pluripolar positive Radon measure on $\Om$ satisfying $\mu (\Om)<\infty$.
		Then, 	there exists a unique plurisubharmonic function $u$ satisfying \eqref{NPMA}. Moreover, $\phi+v\leq u\leq \phi$ for some
		$v\in\mathcal{F}^a(\Om)$.
	\end{corollary}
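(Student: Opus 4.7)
The plan is to apply Theorem \ref{main} with the test function $v:=v_0+\phi$, where $v_0$ is supplied by \cite[Lemma 5.14]{Ceg04}. Since $\Om$ is hyperconvex and $\mu(\Om)<\infty$, Cegrell's lemma produces a (unique) $v_0\in\mathcal{F}(\Om)$ with $(\ddc v_0)^n=\mu$. Because $\mu$ charges no pluripolar set, $v_0\in\mathcal{F}^a(\Om)\subset\NNP(\Om)$ by Remark \ref{rmkF}, so $P[v_0]=0$; and since $v_0\in\mathcal{D}(\Om)$, $\NP{v_0}=\mathbb{1}_{\{v_0>-\infty\}}(\ddc v_0)^n=\mu$.

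I would then verify the two hypotheses of Theorem \ref{main} for $v:=v_0+\phi\in\pshn$. Subadditivity of the non-pluripolar Monge--Amp\`ere operator gives $\NP{v_0+\phi}\geq \NP{v_0}+\NP{\phi}=\mu+0=\mu$, using $\NP{\phi}=0$ (which holds because $\phi$ is model). The identity $P[v_0+\phi]=\phi$ follows by a two-sided bound: monotonicity of $P[\cdot]$ together with $v_0+\phi\leq\phi$ and $\phi=P[\phi]$ yields $P[v_0+\phi]\leq\phi$; conversely, summing candidates (if $w_i\leq u_i+O(1)$ with the boundary conditions required in the definition of $P[u_i]$, then $w_1+w_2$ is a candidate for $P[u_1+u_2]$) gives the super-additivity $P[v_0+\phi]\geq P[v_0]+P[\phi]=0+\phi=\phi$.

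With the hypotheses verified, Theorem \ref{main} produces $u:=u_S\in\pshn$ solving \eqref{NPMA}. Its uniqueness clause, applied with $\psi:=v_0\in\NNP$ satisfying $\NP{\psi}\geq\mu$, gives uniqueness of $u$. The sandwich $\phi+v_0\leq u\leq\phi$ is then read off directly from $S$: $u_S\leq\phi$ since every $w\in S$ satisfies $w\leq\phi$, and $u_S\geq v_0+\phi$ since $v_0+\phi\in S$. The only mild obstacle I anticipate is justifying the super-additivity of $P[\cdot]$ on candidate pairs (together with taking the upper semicontinuous regularization), but this should follow routinely from the sup definition of $P[u]$.
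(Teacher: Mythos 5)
Your proof is correct and supplies exactly the details the paper leaves implicit when it says the corollary follows ``immediately'' from Theorem \ref{main} and Cegrell's lemma. The key move — testing Theorem \ref{main} against $v := v_0 + \phi$, where $v_0 \in \mathcal{F}(\Om)$ is Cegrell's solution of $(\ddc v_0)^n = \mu$ — is the intended one; $v_0 \in \mathcal{F}^a(\Om) \subset \NNP(\Om)$ by Remark \ref{rmkF}(ii) (so $P[v_0]=0$ and $v_0$ also serves as the $\psi$ in the uniqueness clause), $\NP{v_0} = \mu$ since $v_0 \in \mathcal{D}(\Om)$ and $\mu$ is non-pluripolar, and the verification $P[v_0+\phi]=\phi$ via the two-sided bound (monotonicity from one side, pairing candidates from the other) is sound once one checks, as you anticipate, that sums of candidates are candidates. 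Two small remarks: the inequality $\NP{(v_0+\phi)} \geq \NP{v_0} + \NP{\phi}$ (Lemma \ref{NP u+v > NP u}) is \emph{super}-additivity, not subadditivity, of the non-pluripolar Monge--Amp\`ere operator; and when checking that $w_1+w_2$ satisfies the boundary $\liminf$ condition for $P[u_1+u_2]$, note that the polar set $N=\{u_1+u_2=-\infty\}$ contains both $\{u_i=-\infty\}$, so each $\liminf_{\Om\setminus N}(u_i-w_i)$ is no smaller than the corresponding $\liminf_{\Om\setminus\{u_i=-\infty\}}(u_i-w_i)\geq 0$, which makes the step go through without surprises.
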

	 For every $H\in\pshn$, we denote
	$$\mathcal{N}_{NP}(H)=\{w\in\pshn: \mbox{there exists } v\in\mathcal{N}_{NP} \mbox{ such that } v+H\leq w\leq H \},$$
and
		$$\mathcal{N}^a(H)=\{w\in\pshn: \mbox{there exists } v\in\mathcal{N}^a \mbox{ such that } v+H\leq w\leq H \},$$
where	$\mathcal{N}^a$ is the set of functions $v\in\mathcal{D}(\Om)$ with smallest maximal plurisubharmonic majorant identically zero
	and with $(dd^c v)^n$ vanishes on pluripolar sets. It is easy to check that $\mathcal{N}^a\subset\NNP$. 
	The following result, which has been proven first by  \AA hag-Cegrell-Czy\.z-Pham, can be considered as a corollary of Theorem \ref{main}:
	\begin{corollary}\label{cormain}\cite[Theorem 3.7]{ACCP}
		Assume that $\mu$ is a non-negative measure defined on $\Om$ by $\mu=(dd^c\varphi)^n$ for some $\varphi\in\mathcal{N}^a$.
		Then, for every $H\in\mathcal{D}(\Om)$ with $(dd^c H)^n\leq \mu$, there exists a unique function $u\in\mathcal{N}^a(H)$ such that
		$(dd^cu)^n=\mu$ on $\Om$.
	\end{corollary}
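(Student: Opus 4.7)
My plan is to apply Theorem \ref{main} with the model potential $\phi := P[H]$ and then refine the resulting solution into an element of $\mathcal{N}^a(H)$ via the comparison principle. Since $\va \in \mathcal{N}^a \subseteq \NNP$, we have $P[\va] = 0$ and $\NP{\va} = (\ddc \va)^n = \mu$; in particular $\va$ supplies the witness $\psi$ required for uniqueness in Theorem \ref{main}. For the subsolution, I take $v := H + \va$: stability of $\mathcal{D}(\Om)$ under sums places $v \in \pshn \cap \mathcal{D}(\Om)$, the standard expansion of the Monge--Amp\`ere operator yields $(\ddc v)^n \ge (\ddc \va)^n = \mu$, and since $\mu$ puts no mass on pluripolar sets this lifts to $\NP{v} \ge \mu$. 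Moreover $v = H + \va \le H \le P[H] = \phi$, so $v$ lies in the set $S$ of Theorem \ref{main}.

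The technical heart of the argument is the identity $P[v] = P[H + \va] = P[H]$, which verifies the hypothesis $P[v] = \phi$. Monotonicity of $P$ (immediate from its definition as an envelope) combined with $H + \va \le H$ yields $P[H + \va] \le P[H]$. The reverse inequality encodes the intuition that an $\mathcal{N}^a$-function acts as a $P$-negligible perturbation: for any competitor $w$ in the envelope defining $P[H]$, I would construct a competitor $\tilde w$ for $P[H + \va]$ (using a truncation or modification near the pole set of $\va$), employing $P[\va] = 0$ to control the boundary condition $\liminf_{\Om \setminus N \ni z \to \partial \Om}(H + \va - \tilde w)(z) \ge 0$ on the complement of $\{\va = -\infty\}$. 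This is the local analog of the Darvas--Di Nezza--Lu envelope identity for perturbations by potentials of minimal singularities, and it is the main obstacle of the proof.

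With Theorem \ref{main} available, we obtain a unique $u \in \pshn$ with $\NP{u} = \mu$ and $P[u] = P[H]$, together with $u \ge v = H + \va$. To upgrade the upper bound $u \le P[H]$ to $u \le H$, I invoke the comparison principle in $\mathcal{D}(\Om)$: the inequality $(\ddc u)^n \ge \mu \ge (\ddc H)^n$, combined with the boundary behavior inherited from $P[u] = P[H]$, forces $u \le H$. Thus $H + \va \le u \le H$ with $\va \in \mathcal{N}^a$, exhibiting $u \in \mathcal{N}^a(H) \subset \mathcal{D}(\Om)$. The measure $(\ddc u)^n$ is then well-defined, and since $u$ is sandwiched between $\mathcal{D}(\Om)$-functions whose Monge--Amp\`ere masses vanish on pluripolar sets, $(\ddc u)^n$ inherits this property, yielding $(\ddc u)^n = \NP{u} = \mu$. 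Uniqueness in $\mathcal{N}^a(H)$ is inherited from Theorem \ref{main}: any other solution $u' \in \mathcal{N}^a(H)$ with $(\ddc u')^n = \mu$ satisfies $\NP{u'} = \mu$ and, applying the envelope identity above to the witness of $u'$, $P[u'] = P[H]$; hence $u' = u$.
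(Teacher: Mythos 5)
Your overall structure (apply Theorem \ref{main} with $\phi := P[H]$, sandwich the resulting solution between $H+\varphi$ and $H$, and conclude it lies in $\mathcal{N}^a(H)$) matches the paper, and your choice of subsolution $v = H + \varphi$ is fine: the identity $P[H+\varphi] = P[H]$ that you flag as the ``main obstacle'' is in fact an easy consequence of superadditivity of the envelope, $P[a]+P[b] \leq P[a+b]$, together with $P[\varphi]=0$ and monotonicity, exactly as the paper itself invokes in the uniqueness part (``$P[H]=P[H]+P[\psi]\leq P[H+\psi]\leq P[H]$''). So that part is not actually a serious obstacle.

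The genuine gap is in your step upgrading $u \leq P[H]$ to $u \leq H$. You invoke ``the comparison principle,'' but neither version available here applies. Corollary \ref{cor: comparison 1} requires the hypothesis $u \leq H + O(1)$, and this fails in general: from Theorem \ref{main} you only control $u$ from above by $P[H]$, and $P[H]-H$ is typically unbounded. A concrete instance: take $H = \tfrac12\varphi$; then $(dd^c H)^n = 2^{-n}\mu \leq \mu$, $P[H]=0$, the solution is $u=\varphi$, and $u-H=\tfrac12\varphi$ is unbounded below, so the $O(1)$ hypothesis is simply false. Corollary \ref{cor: comparison wrt NPMA} would work if one knew $H\in\mathcal{N}_{\mathrm{NP}}(P[H])$, but that membership is not available a priori --- it is exactly what the paper extracts by running Theorem \ref{main} a \emph{second} time. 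The paper's argument is: $H$ itself solves $\NP{w}=(dd^cH)^n$, $P[w]=\phi$ (with witness $\varphi$, since $\NP{\varphi}=\mu\geq(dd^cH)^n$), hence by the uniqueness of Theorem \ref{main}, $H$ coincides with the envelope $(\sup\{w:P[w]=\phi,\ (dd^cw)^n\geq (dd^cH)^n\})^*$; monotonicity of these envelopes in the measure then gives $u\leq H$ directly, with no bounded-difference assumption. You need this second invocation of Theorem \ref{main} (or an equivalent argument producing $H\in\mathcal{N}_{\mathrm{NP}}(\phi)$); the comparison principle alone cannot close the gap.
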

	\vspace{0.2cm}
	The paper is organized as follows. In Section \ref{secpre}, we recall auxiliary facts about the plurifine topology and the non-pluripolar Monge-Amp\`ere measure. In Sections \ref{secsta} and \ref{secenvelope}, we introduce some important tools for the
	proof of the existence of solution to \eqref{NPMA}. In Section \ref{sec: comparison}, we prove two Xing-type comparison principles
	and some related results. Theorem \ref{main} and Corollary \ref{cormain} are proved in Section \ref{secproof}.
	\section{Preliminaries}\label{secpre}
	In this section, we recall some basic concepts and properties about the plurifine topology and the non-pluripolar Monge-Amp\`ere measure.
	The reader can find more details in \cite{BT87}.
	\subsection{The plurifine topology}
		The plurifine topology on an open set $\Omega$ in $\C^n$ is the smallest topology on $\Omega$ for which all the plurisubharmonic functions are continuous.
	A basis $\mathcal{B}$ of the plurifine topology on $\Omega$ consists of the sets of the following form:
	\[U\cap\{u>0\}\]
	where $U$ is an open subset in $\Omega,\ u\in\text{PSH}(U)$.
	
The plurifine topology
has the following quasi-Lindel\"of property:
	\begin{theorem}\cite[Theorem 2.7]{BT87}\label{Lindelof}
		An arbitrary union of plurifine open subsets differs from a countable subunion by at most a pluripolar set.
	\end{theorem}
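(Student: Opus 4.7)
The plan is to reduce the arbitrary union to basis sets indexed by a countable Euclidean basis on which the defining plurisubharmonic functions are uniformly bounded above, and then apply Choquet's lemma together with the Bedford--Taylor negligible sets theorem for families of plurisubharmonic functions.

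Any plurifine open set is, by definition of the basis $\mathcal{B}$, a union of sets of the form $U \cap \{u > 0\}$ with $U \subset \Omega$ Euclidean open and $u \in \text{PSH}(U)$, so any arbitrary union of plurifine open sets may be written as
\[
E \;=\; \bigcup_{\alpha \in A} U_\alpha \cap \{u_\alpha > 0\}.
\]
Fix a countable basis $\{B_j\}_{j \in \mathbb{N}}$ of the Euclidean topology on $\Omega$ (e.g.\ rational balls). Because every plurisubharmonic function is upper semicontinuous and nowhere equal to $+\infty$, the set $U_\alpha \cap \{u_\alpha < N\}$ is Euclidean open for each $N \in \mathbb{N}$, and
\[
U_\alpha \cap \{u_\alpha > 0\} \;=\; \bigcup_{N \in \mathbb{N}} \;\bigcup_{\substack{j \in \mathbb{N} \\ B_j \subset U_\alpha,\ u_\alpha|_{B_j} < N}} B_j \cap \{u_\alpha > 0\}.
\]
Reindexing gives $E = \bigcup_{(j,N) \in \mathbb{N}^2} E_{j,N}$, where $E_{j,N} := \bigcup_{\alpha \in A_{j,N}} B_j \cap \{u_\alpha > 0\}$ and $A_{j,N} := \{\alpha \in A : B_j \subset U_\alpha \text{ and } u_\alpha < N \text{ on } B_j\}$. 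Since a countable union of pluripolar sets is pluripolar, it suffices to extract, for each $(j,N)$, a countable subfamily of $A_{j,N}$ whose contribution to $E_{j,N}$ differs from $E_{j,N}$ by a pluripolar set.

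Fix $(j, N)$. The family $\{u_\alpha\}_{\alpha \in A_{j,N}} \subset \text{PSH}(B_j)$ is uniformly bounded above by $N$. By Choquet's lemma there is a countable subfamily $\{u_{\alpha_k}\}_{k \in \mathbb{N}} \subset \{u_\alpha\}_{\alpha \in A_{j,N}}$ whose pointwise supremum has the same upper-semicontinuous regularization as that of the whole family; by the Bedford--Taylor negligible sets theorem, the upper-semicontinuous regularization of any such locally bounded family of plurisubharmonic functions coincides with the pointwise supremum off a pluripolar set. Combining these two facts, there is a pluripolar set $P_{j,N} \subset B_j$ outside of which $\sup_{k} u_{\alpha_k} = \sup_{\alpha \in A_{j,N}} u_\alpha$. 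Since $E_{j,N} = \{\sup_{\alpha \in A_{j,N}} u_\alpha > 0\} \cap B_j$ and $\bigcup_k B_j \cap \{u_{\alpha_k} > 0\} = \{\sup_k u_{\alpha_k} > 0\} \cap B_j$, these two sets differ by at most a subset of $P_{j,N}$.

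Assembling, over $(j, N) \in \mathbb{N}^2$, the chosen countable subfamilies yields a countable subunion of the original union that differs from $E$ by at most $\bigcup_{j,N} P_{j,N}$, which is pluripolar. The only genuine obstacle is supplying the local uniform upper bound that Choquet's lemma and the negligible sets theorem both require; that is precisely the role of the decomposition through the sublevel sets $\{u_\alpha < N\}$ combined with the countable basis of the Euclidean topology, so beyond this packaging the argument reduces to two classical pluripotential-theoretic facts.
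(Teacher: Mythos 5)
Your argument is correct. Note that the paper does not prove this statement at all --- it is quoted verbatim from Bedford--Taylor \cite[Theorem 2.7]{BT87} --- so there is no internal proof to compare against; your reduction to basis sets over a countable Euclidean basis with the sublevel-set trick $\{u_\alpha<N\}$ to secure local uniform upper bounds, followed by Choquet's lemma and the negligible-sets theorem, is essentially the classical proof of the quasi-Lindel\"of property. The only point worth making explicit is the final bookkeeping step: your countable subunion consists of sets $B_j\cap\{u_{\alpha_k}>0\}$, which are subsets of (rather than members of) the original family, so one should pass to the countably many original plurifine open sets containing them, which is immediate.
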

	By the quasi-Lindel\"of property, one get the following lemma:
	\begin{lemma}\label{open subsets decreasing to plurifine open set}
		Let $\mathcal{O}$ be a plurifine open subset of $\Omega$. Then there exists a decreasing sequence $\{V_l\}_l$ of open subsets of $\Omega$
		 such that $V_l$ contains $\mathcal{O}$ for every $l$ and $\cap_{l=1}^\infty V_l\setminus\mathcal{O}$ is a pluripolar set.
	\end{lemma}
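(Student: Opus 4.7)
The plan is to combine the quasi-Lindel\"of property with the quasi-continuity of plurisubharmonic functions with respect to the Monge-Amp\`ere capacity (a standard consequence of the results of Bedford and Taylor \cite{BT87}). By Theorem \ref{Lindelof} and the description of the basis $\mathcal{B}$, I would first write $\mathcal{O}=\bigcup_{j=1}^\infty B_j\cup E$, where $B_j=U_j\cap\{u_j>0\}$ are basic plurifine open sets and $E$ is pluripolar. Choosing a function $v\in\pshn$ with $E\subset\{v=-\infty\}$ and setting $W_l:=\{v<-l\}$, one obtains Euclidean open sets (since $v$ is upper semicontinuous) that decrease in $l$, contain $E$, and satisfy $\bigcap_l W_l=\{v=-\infty\}$ pluripolar.

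For each $j$, quasi-continuity produces a Euclidean open $\mathcal{U}_j^{(l)}\subset\Omega$ with Monge-Amp\`ere capacity at most $2^{-(j+l)}$ such that $u_j$ is continuous on $U_j\setminus\mathcal{U}_j^{(l)}$ in the subspace topology; replacing $\mathcal{U}_j^{(l)}$ by $\bigcup_{k\geq l}\mathcal{U}_j^{(k)}$ makes it decreasing in $l$ at the cost of only a factor of two in the capacity bound. I would then define $V_j^{(l)}:=B_j\cup\mathcal{U}_j^{(l)}$. The crucial check is that $V_j^{(l)}$ is Euclidean open in $\Omega$: for $z\in B_j\setminus\mathcal{U}_j^{(l)}$, the subspace continuity of $u_j$ at $z$ yields a Euclidean ball $B_\delta(z)\subset U_j$ on which $u_j>0$ off $\mathcal{U}_j^{(l)}$, so $B_\delta(z)\subset B_j\cup\mathcal{U}_j^{(l)}=V_j^{(l)}$, while points of $\mathcal{U}_j^{(l)}$ are trivially interior.

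Finally, set $V_l:=\bigcup_{j=1}^\infty V_j^{(l)}\cup W_l$. This is Euclidean open, decreasing in $l$, and contains $\mathcal{O}$. Writing $A:=\bigcup_j B_j$ and $C_l:=\bigcup_j\mathcal{U}_j^{(l)}$, we have $V_l=A\cup C_l\cup W_l$, hence $\bigcap_l V_l=A\cup\bigcap_l(C_l\cup W_l)$; because $C_l$ and $W_l$ are both decreasing in $l$, a short case analysis gives $\bigcap_l(C_l\cup W_l)\subset\bigcap_l C_l\cup\bigcap_l W_l$. The first set is pluripolar since countable sub-additivity of capacity yields $\capa(C_l)\leq 2^{1-l}\to 0$, and the second is pluripolar by construction. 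Since $A\subset\mathcal{O}$, the set $\bigcap_l V_l\setminus\mathcal{O}$ is pluripolar.

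The principal obstacle is the openness of $V_j^{(l)}$: quasi-continuity only provides continuity in the subspace topology of $U_j\setminus\mathcal{U}_j^{(l)}$, and the whole point of enlarging $B_j$ by the capacity-small set $\mathcal{U}_j^{(l)}$ is to absorb the possible Euclidean discontinuities of $u_j$ near its zero set, thereby upgrading subspace continuity to a genuine Euclidean neighborhood. The diagonal weighting $2^{-(j+l)}$ is also needed so that summing over $j$ keeps the capacity of $C_l$ small enough for $\bigcap_l C_l$ to remain pluripolar.
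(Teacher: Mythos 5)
Your proposal is correct and follows essentially the same route as the paper: quasi-Lindel\"of decomposition of $\mathcal{O}$ into basic plurifine open sets plus a pluripolar remainder, quasi-continuity of each $u_j$ to produce Euclidean-open exceptional sets of small capacity, passing to tails $\bigcup_{k\geq l}$ to obtain decreasing sequences, and then a capacity bound showing the residual intersection is pluripolar. The one cosmetic difference is that the paper establishes openness of $\mathcal{O}_j\cup V_{j,l}$ by invoking Tietze to extend $u_j|_{U_j\setminus W_{j,l}}$ to a continuous $f_{j,l}$ on $U_j$ and writing the set as $V_{j,l}\cup\{f_{j,l}>0\}$, whereas you argue directly from subspace continuity at a point $z\in B_j\setminus\mathcal{U}_j^{(l)}$; both are valid and yours is marginally more elementary.
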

	\begin{proof}
		Since we can write $\mathcal{O} = \cup \{ \mathcal{O}_i \in \mathcal{B}, i \in I\}$, it follows from Theorem~\ref{Lindelof} 
		that there exist a sequence $\{\mathcal{O}_j\}_j\subset\mathcal{B}$ and a pluripolar set $N$ such that 
		\begin{equation}\label{O=cup Oj cup N}
		\mathcal{O}=\cup_{j=1}^\infty\mathcal{O}_j\cup N.
		\end{equation}
		By the definition of $\mathcal{B},$  for each $j$, there exist  an open subset $U_j$ of $\Omega$ and  a plurisubharmonic function $u_{j}\in\text{PSH}(U_j)$ such that
		$$\mathcal{O}_j=\{z\in U_j:\ u_{j}(z)>0\}.$$

		Since $u_j$ is quasi-continuous on $U_j$, for every $l\in\Z^+$, there exists an open subset $W_{j,l}$ of $U_j$ such that
		 $\capa(W_{j,l}, U_j) < 2^{-1-l-j}$ and $u_j \in C(U_j \setminus W_{j,l})$. By Tietze's theorem, we can find a continuous extension
		 $f_{j,l}$ of $u_j$  on $U_j$. Set
			\begin{equation}\label{Vjl deacreasing}
			V_{j,l} = \bigcup\limits_{s = l}^{\infty} W_{j,s}.
			\end{equation}
			Then, the sequence $\{V_{j,l}\}_l$ is decreasing and 
			\begin{equation}\label{capa Vjl}
			\capa(V_{j,l},U_j) \leq \sum\limits_{s = l}^{\infty} \capa(W_{j,s}, U_j) \leq \sum\limits_{s = l}^\infty \dfrac{1}{2^{1+s+j}} = 2^{-j-l} \sum\limits_{s = 1}^\infty \dfrac{1}{2^s} = 2^{-j-l}.
			\end{equation}

		Observe that
		\begin{align*}
		\mathcal{O}_j\cup V_{j,l}=&V_{j,l}\cup\{z\in U_j:\ u_{j}(z)>0\}\\
		=&V_{j,l}\cup \{z\in U_j\setminus V_{j,l}:\ u_{j}(z)>0\}\\
		=&V_{j,l}\cup\{z\in U_j\setminus V_{j,l}:\ f_{j,l}(z)>0 \}\\
		=&V_{j,l}\cup \{z\in U_j:\ f_{j,l}(z)>0 \},
		\end{align*}
		which implies that	$\mathcal{O}_j\cup V_{j,l}$  is open.
		
		Let $u\in\pshn$ such that 
		\begin{equation}\label{eq2 lem open}
		N\subset \{u=-\infty\}.
		\end{equation}
		 For each $l\in\Z^+$, we denote $N_l=\{u<-l\}$. We have $N_l$ is open and
		\begin{equation}\label{eq3 lem open}
		\lim_{l\to\infty}	\capa(N_l, \Om)=0.
		\end{equation}
		 Now, for every $l\in\Z^+$, we define
		 $$V_l=N_l\cup_{j=1}^\infty (\mathcal{O}_j\cup V_{j,l}).$$
		Then $\{V_l\}_l$ is a decreasing sequence of open sets. By (\ref{O=cup Oj cup N}) and \eqref{eq2 lem open}, 
		 $\cap_{l=1}^{\infty}V_l$ contains
		 $\mathcal{O}$. Moreover, by (\ref{capa Vjl}), for every $l_0\in\Z^+$,
		 we have
		\begin{align*}
		\capa(\cap_{l=1}^\infty V_l\setminus\mathcal{O},\Omega)
		\leq \capa( V_{l_0}\setminus\mathcal{O},\Omega)
	&\leq \capa(N_{l_0}\cup_{j=1}^\infty  V_{j,l_0},\Omega)\\
 &\leq \capa(N_{l_0}, \Om)+\sum_{j=1}^\infty \capa(V_{j,l_0},\Omega) \\
	&	\leq	\capa(N_{l_0}, \Om)+ \sum_{j=1}^\infty \capa(V_{j,l_0},U_j)\\
		&	\leq	\capa(N_{l_0}, \Om)+\sum_{j=1}^\infty2^{-(l_0+j)}\\
		&=	\capa(N_{l_0}, \Om)+ 2^{-l_0}.
		\end{align*}
		Letting $l_0\rightarrow\infty$ and using  \eqref{eq3 lem open}, we obtain
	$$	\capa(\cap_{l=1}^\infty V_l\setminus\mathcal{O},\Omega)=0.$$
	Hence $\cap_{l=1}^\infty V_l\setminus\mathcal{O}$ is a pluripolar set.
	
		The proof is completed.
	\end{proof}
	\subsection{The non-pluripolar complex Monge-Amp\`ere measure}
	We recall the definition of the non-pluripolar complex Monge-Amp\`ere measures.
	\begin{definition}\label{def NPMA}\cite{BT87}
		If $u\in\psh$ then the non-pluripolar complex Monge-Amp\`ere measure of $u$ is the measure $\NP{u}$ satisfying 
		$$\int\limits_E\NP{u}=\lim\limits_{j\rightarrow\infty}\int\limits_{E\cap\{u>-j\}}\left(\ddc \max\{u,-j\} \right)^n,$$
		for every Borel set $E\subset\Om.$
	\end{definition}
	
	\begin{remark}\label{remk NPMA}
		\begin{itemize}
			\item[i.]If $E\subset\{u>-k\},$ then it follows from \cite[Corollary 4.3]{BT87} that $$\int\limits_E(\ddc\max\{u, -j\})^n=\int\limits_E(\ddc\max\{u,-k\})^n, \text{ for every }j\geq k.$$
			In particular, 
			$$\int_{E\cap\{u>-k\}}\NP{u}=\int\limits_{E\cap\{u>-k\}}(\ddc\max\{u,-k\})^n,$$
			for every $k>0$ and for every Borel set $E\subset\Om$.
			\item[ii.] $\NP{u}$ vanishes on every pluripolar sets.
			\item[iii.] If $\Omega$ is the open unit ball and $u$ is defined by
			$$u(z)=\left(-\log |z_1|\right)^{1/n}(|z_2|^2+...+|z_n|^2-1),$$
			 then $\NP{u}$ is not locally finite (see \cite{Kis84}). 
		\end{itemize}
	\end{remark}
	The following results are classical. We present the proof here for the convenience of the reader.
		\begin{lemma}\label{lem: NP of max}
			Let $u,v \in \pshn$ and $\mu$ be a positive Borel measure that vanishes on pluripolar sets. If $\NP{ u} \geq \mu, \NP{ v} \geq \mu$ then $\NP{\max \{u,v \}} \geq \mu.$
		\end{lemma}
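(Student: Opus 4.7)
The plan is to establish the following sharpening: for every Borel set $E\subset\{u\ge v\}$, $\NP{\max\{u,v\}}(E)\ge \NP{u}(E)$, and the symmetric inequality with the roles of $u$ and $v$ interchanged. Once this is in place, decomposing an arbitrary Borel set $E$ as the disjoint union $(E\cap\{u\ge v\})\cup(E\cap\{v>u\})$, together with the hypotheses $\NP{u}\ge\mu$ and $\NP{v}\ge\mu$, immediately yields $\NP{\max\{u,v\}}(E)\ge \mu(E)$.

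To prove the sharpened inequality I would first reduce to the bounded setting via the canonical truncations $u_k:=\max\{u,-k\}$, $v_k:=\max\{v,-k\}$, and $w_k:=\max\{u_k,v_k\}$, and prove the bounded analogue $(dd^c w_k)^n(E)\ge (dd^c u_k)^n(E)$ for every Borel $E\subset\{u_k\ge v_k\}$. The crucial tool is the perturbation $w_{k,\ve}:=\max\{u_k+\ve,v_k\}$, which decreases to $w_k$ as $\ve\searrow 0$. On the plurifine open set $\{u_k+\ve>v_k\}$, which contains $\{u_k\ge v_k\}$, the functions $w_{k,\ve}$ and $u_k+\ve$ coincide; so the plurifine locality of the Bedford--Taylor Monge--Amp\`ere operator for bounded plurisubharmonic functions yields $(dd^c w_{k,\ve})^n=(dd^c u_k)^n$ on that set. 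Combining this with the weak continuity of the Bedford--Taylor operator along decreasing bounded sequences and the Portmanteau upper estimate on compact sets gives $(dd^c w_k)^n(K)\ge (dd^c u_k)^n(K)$ for every compact $K\subset\{u_k\ge v_k\}$; inner regularity of both measures then promotes this to every Borel subset.

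To translate the bounded estimate back into a statement about $\NP{\cdot}$, I would use Definition~\ref{def NPMA} together with Remark~\ref{remk NPMA}(i): on $\{u>-k\}$ the measure $\NP{u}$ coincides with $(dd^c u_k)^n$, and analogously for $w:=\max\{u,v\}$. The elementary inclusion $\{u\ge v\}\cap\{u>-k\}\subset\{u_k\ge v_k\}$ then converts the bounded inequality into
\[
\NP{w}\bigl(E\cap\{u\ge v\}\cap\{u>-k\}\bigr)\ge\mu\bigl(E\cap\{u\ge v\}\cap\{u>-k\}\bigr),
\]
and its symmetric counterpart holds on $\{v>u\}\cap\{v>-k\}$. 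Letting $k\to\infty$ and noting that $\mu$ vanishes on the pluripolar set $\{u=v=-\infty\}$ finishes the proof.

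The main obstacle is the contact set $\{u=v\}$: it is plurifine closed but not plurifine open, so a direct appeal to plurifine locality on the disjoint plurifine open sets $\{u>v\}$ and $\{v>u\}$ leaves the positive-mass portion $\{u=v>-\infty\}$ unaccounted for. The small translation $u_k\mapsto u_k+\ve$ is precisely what enlarges the relevant plurifine open set to swallow the whole of $\{u_k\ge v_k\}$, and thereby bridges this gap.
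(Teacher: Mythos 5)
Your proof is correct and follows the same overall strategy as the paper's: decompose $E$ according to $\{u\ge v\}$ versus $\{u<v\}$, pass to bounded truncations $u_k=\max\{u,-k\}$, $v_k=\max\{v,-k\}$ via Definition~\ref{def NPMA} and Remark~\ref{remk NPMA}(i), and invoke the key bounded inequality
\[
\bigl(dd^c\max\{u_k,v_k\}\bigr)^n\;\ge\;\mathbb{1}_{\{u_k\ge v_k\}}\,(dd^c u_k)^n+\mathbb{1}_{\{u_k< v_k\}}\,(dd^c v_k)^n.
\]
The genuine difference lies in how this inequality is justified. The paper simply cites Demailly's Proposition~11.9 (see also \cite[Proposition 4.3]{NP09}), whereas you re-derive it from scratch via the perturbation $\max\{u_k+\ve,v_k\}$, the plurifine locality \cite[Corollary 4.3]{BT87} of the Bedford--Taylor operator on the plurifine open set $\{u_k+\ve>v_k\}\supset\{u_k\ge v_k\}$, weak convergence of $(dd^c\,\cdot\,)^n$ along decreasing bounded sequences, the Portmanteau upper estimate on compact sets, and inner regularity. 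Both routes are sound; yours is more self-contained (and is in fact essentially the standard proof of the cited inequality), at the cost of being longer. One cosmetic difference: the paper runs the truncation through a single increasing exhaustion $E_{j_0}=E\cap\{u+v>-j_0\}$ that controls $u$, $v$ and $\max\{u,v\}$ simultaneously, while you truncate the two pieces $\{u\ge v\}$ and $\{v>u\}$ separately via $\{u>-k\}$ and $\{v>-k\}$ and then invoke $\mu(\{u=-\infty\})=\mu(\{v=-\infty\})=0$ at the end; this changes nothing essential.
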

		\begin{proof}
			Since $\mu$ is Borel which does not charge the set $\{ u + v = -\infty\}$, we only need to show that
				$$\int\limits_{E} \NP{\max \{ u,v\}} \geq \int\limits_{E} d\mu,$$
			 for every Borel set $E \subset \{ u+ v >-\infty\}$. Note that $E = \bigcup\limits_{j \geq 1} E_j$ where $E_j = E \cap \{ u + v >-j\}$. We will show that
			\[
			\int\limits_{E_{j_0}} \NP{\max \{ u,v\}} \geq \int\limits_{E_{j_0}} d\mu,
			\]
			for every $j_0\geq 1$.
			
			Since $\max \{u,v\} \geq \min\{u, v\}\geq u+v$, we have 
			$$E_{j_0}\subset\{u+v > -j_0\}\subset \{u>-j\}\cap\{v>-j\} \subset \{\max \{u,v\} > -j \},$$ 
			for every $j > j_0$.  Hence, by Definition \ref{def NPMA} and Remark \ref{remk NPMA} (i), we have
		\begin{equation}\label{eq1lemNPmax}
			\int\limits_{E_{j_0}} \NP{w} = \int\limits_{E_{j_0} } \Big(\ddc \max\{ w, -j \} \Big)^n,
		\end{equation}
			for $w\in\{u, v, \max\{u, v\}\}$ and for every $j>j_0$.
			
			Denote $u_j=\max\{u, -j\}$, $v_j=\max\{v, -j\}$ and $\phi_j=\max\{\max\{u, v\}, -j\}$.
		Observe that $\phi_j = \max \{u_j, v_j\}$. By applying 
			\cite[Proposition 11.9]{Dem89}
			 (see also \cite[Proposition 4.3]{NP09}), we have
			 \begin{equation}\label{eq1.1lemNPmax}
			 (\ddc\phi_j)^n\geq \mathbb{1}_{\{u_j\geq v_j\}}(\ddc u_j)^n+\mathbb{1}_{\{u_j< v_j\}}(\ddc v_j)^n.
			 \end{equation}
			Note $E_{j_0}\cap \{u_j\geq v_j\}=E_{j_0}\cap \{u\geq v\}$ and  $E_{j_0}\cap \{u_j<v_j\}=E_{j_0}\cap \{u< v\}$ for every
			$j>j_0$. Hence, it follows from \eqref{eq1.1lemNPmax} that
			 \begin{equation}\label{eq2lemNPmax}
			 \int\limits_{E_{j_0} } (\ddc\phi_j)^n 
			 \geq \int\limits_{E_{j_0}\cap \{u\geq v\}}(\ddc u_j)^n 
			 +  \int\limits_{E_{j_0}\cap \{u<v\}} (\ddc v_j)^n,
			 \end{equation}
			for every $j>j_0$.
			
			Combining \eqref{eq1lemNPmax} and \eqref{eq2lemNPmax}, we get 
			$$\int\limits_{E_{j_0}} \NP{\max\{u, v\}}\geq
			 \int\limits_{E_{j_0}\cap \{u\geq v\}} \NP{u}+\int\limits_{E_{j_0}\cap \{u<v\}} \NP{v}.$$
			 Thus, by the facts  $\NP{ u} \geq \mu$ and $\NP{v}\geq \mu$, we have
			 	$$\int\limits_{E_{j_0}} \NP{\max\{u, v\}}\geq \int\limits_{E_{j_0}} d\mu.$$
		Letting $j_0\rightarrow\infty$, we obtain 
		$$\int\limits_{E} \NP{\max \{ u,v\}} \geq \int\limits_{E} d\mu.$$
		The proof is completed.
		\end{proof}
	\begin{lemma}\label{NP u+v > NP u}
		Let $u,v\in\pshn$. Then $\NP{(u+v)}\geq\NP{u}+\NP{v}.$
	\end{lemma}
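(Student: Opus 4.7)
The plan is to reduce to bounded truncations via plurifine locality and then use the positivity of the cross terms in the multinomial expansion of $(dd^c(u_j+v_j))^n$.

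First I would observe that since $\text{NP}(dd^c u)^n$, $\text{NP}(dd^c v)^n$, and $\text{NP}(dd^c(u+v))^n$ all vanish on pluripolar sets (Remark \ref{remk NPMA}(ii)), and since $\{u=-\infty\}\cup\{v=-\infty\}$ is pluripolar, it suffices to check the inequality on Borel sets $E\subset\{u>-\infty\}\cap\{v>-\infty\}$. Writing $E=\bigcup_{j\geq1}E_j$ with $E_j=E\cap\{u>-j\}\cap\{v>-j\}$, monotone convergence reduces the problem to proving
\[
\int_{E_{j_0}}\NP{(u+v)}\ \geq\ \int_{E_{j_0}}\NP{u}\ +\ \int_{E_{j_0}}\NP{v}
\]
for each fixed $j_0$.

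Now I would exploit plurifine locality. Set $u_{j_0}=\max\{u,-j_0\}$ and $v_{j_0}=\max\{v,-j_0\}$. On the plurifine open set $\{u>-j_0\}\cap\{v>-j_0\}$ one has $u=u_{j_0}$, $v=v_{j_0}$, and $u+v>-2j_0=\max\{u+v,-2j_0\}$ on this set. By Remark \ref{remk NPMA}(i) applied to $u$, $v$, and to $u+v$ (with the truncation level $2j_0$),
\[
\int_{E_{j_0}}\NP{u}=\int_{E_{j_0}}(\ddc u_{j_0})^n,\quad \int_{E_{j_0}}\NP{v}=\int_{E_{j_0}}(\ddc v_{j_0})^n,
\]
\[
\int_{E_{j_0}}\NP{(u+v)}=\int_{E_{j_0}}\bigl(\ddc\max\{u+v,-2j_0\}\bigr)^n.
\]
By the plurifine locality of the Monge--Amp\`ere operator of bounded psh functions (\cite[Corollary 4.3]{BT87}), two bounded psh functions that coincide on a plurifine open set have Monge--Amp\`ere measures agreeing there, so
\[
\int_{E_{j_0}}\bigl(\ddc\max\{u+v,-2j_0\}\bigr)^n=\int_{E_{j_0}}\bigl(\ddc(u_{j_0}+v_{j_0})\bigr)^n.
\]

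Finally, since $u_{j_0}$ and $v_{j_0}$ are bounded plurisubharmonic, the multinomial expansion gives
\[
\bigl(\ddc(u_{j_0}+v_{j_0})\bigr)^n=\sum_{k=0}^n\binom{n}{k}(\ddc u_{j_0})^k\wedge(\ddc v_{j_0})^{n-k}\geq (\ddc u_{j_0})^n+(\ddc v_{j_0})^n,
\]
the inequality holding because the dropped mixed currents are positive. Integrating over $E_{j_0}$ and chaining the previous identities yields the desired inequality on $E_{j_0}$, and letting $j_0\to\infty$ finishes the proof.

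The main obstacle is the bookkeeping surrounding plurifine locality: one must justify that on a plurifine open set where $u,v$ are already bounded, the non-pluripolar Monge--Amp\`ere mass of the unbounded $u+v$ can be replaced by the classical Bedford--Taylor mass of the bounded sum $u_{j_0}+v_{j_0}$. Once this identification is in place, the positivity of the cross terms is immediate; the rest is monotone convergence and the pluripolarity of $\{u+v=-\infty\}$.
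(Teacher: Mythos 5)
Your proof is correct and follows essentially the same route as the paper's: truncate, invoke plurifine locality (\cite[Corollary 4.3]{BT87}) to replace the truncated sum $\max\{u+v,-k\}$ by $u_{j_0}+v_{j_0}$ on the relevant plurifine open set, then use positivity of the mixed Monge--Amp\`ere terms. The only cosmetic difference is the choice of exhausting sets ($\{u>-j_0\}\cap\{v>-j_0\}$ with truncation level $2j_0$ for the sum, versus the paper's $\{u+v>-j\}$ with a single level $j$), which does not change the substance of the argument.
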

	\begin{proof}
		We  need to show that 
		$$\int\limits_{E} \NP{(u+v)}\geq \int\limits_{E} \NP{u}+ \int\limits_{E} \NP{v},$$
		for every Borel set $E\subset\Om\setminus\{u+v=-\infty\}$.  For $j_0\in\Z^+$, we denote 
		$E_{j_0}=E\cap\{u+v>-j_0\}$. Note that
		\begin{equation}\label{eq0lem NP u+v > NP u}
		E_{j_0}\subset\{u+v>-j\}\subset \{u>-j\}\cap\{v>-j\},
		\end{equation}
		for every $j>j_0$.
		 Hence, by Definition \ref{def NPMA} and Remark \ref{remk NPMA} (i), we have
		 \begin{equation}\label{eq1lem NP u+v > NP u}
		 \int\limits_{E_{j_0}} \NP{w} = \int\limits_{E_{j_0} } \Big(\ddc \max\{ w, -j \} \Big)^n,
		 \end{equation}
		 for $w\in\{u, v, u+v\}$ and for every $j>j_0$. 
		 
		 Denote $u_j=\max\{u, -j\}$, $v_j=\max\{v, -j\}$ and $\phi_j=\max\{u+v, -j\}$. For every  $z\in \{u+v>-j\}$,
		  we have $u_j(z)=u(z)$, $v_j(z)=v(z)$ and $\phi_j(z)=u(z)+v(z)$. Hence
		  $$\phi_j =u_j+v_j,$$
	on the plurifine open set $ \{u+v>-j\}$. Hence, it follows from \cite[Corollary 4.3]{BT87} that
	\begin{equation}\label{eq2lem NP u+v > NP u}
	(\ddc \phi_j)^n|_{\{u+v>-j\}}=(\ddc (u_j+v_j))^n|_{\{u+v>-j\}}\geq \Big((\ddc u_j)^n+(\ddc v_j)^n\Big)|_{\{u+v>-j\}}.
	\end{equation}
	Combining \eqref{eq0lem NP u+v > NP u}, \eqref{eq1lem NP u+v > NP u} and \eqref{eq2lem NP u+v > NP u}, we have
	\begin{align*}
	 \int\limits_{E_{j_0}} \NP{(u+v)}&= \int\limits_{E_{j_0}} (\ddc \phi_j)^n\\
	 & \geq \int\limits_{E_{j_0}}(\ddc u_j)^n+\int\limits_{E_{j_0}}(\ddc v_j)^n\\
	 &= \int\limits_{E_{j_0}} \NP{u}+ \int\limits_{E_{j_0}} \NP{v},
	\end{align*}
	for every $j>j_0$.
	
	Letting $j_0\rightarrow \infty$, we obtain 
	$$ \int\limits_{E} \NP{(u+v)}\geq \int\limits_{E} \NP{u}+ \int\limits_{E} \NP{v}.$$
	The proof is completed.
	\end{proof}
	\section{Stability of subsolutions and supersolutions}\label{secsta}
	The goal of this section is to prove Lemmas \ref{lem: NP of limsup} and \ref{thm NPddcu leq u>-infty mu} which are important tools
	for the proof of the main theorem. First, we need the following lemma:
		\begin{lemma}\label{lemma1}
			Let $u, u_j $ ($j\in\Z^+$) be  negative plurisubharmonic functions on $\Omega$ such that $\{u_j\}_{j\geq 1}$ is monotone and 
			$u=(\lim_{j\to\infty}u_j)^*$.
			 Assume $f, f_j$ are bounded, quasi-continuous on $\Omega$ satisfying $0\leq f,f_j\leq 1$, $f_j$ converges monotonically to $f$ quasi-everywhere. Suppose that $\{f\neq 0\}\subset\{ u\geq -M\}$ and
			 $\{f_j\neq 0\}\subset\{ u_j\geq -M\}$  for every $j$, where $M>0$ is a constant.
			  Then $f_j \NP{u_j}$ converges weakly to $f\NP{u}$ as $j\to\infty$.
		\end{lemma}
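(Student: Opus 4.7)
The plan is to reduce to the classical Bedford--Taylor setting for uniformly bounded plurisubharmonic functions and then pass the cutoffs through a quasi-continuity argument.

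First, I would set $v := \max\{u,-M-1\}$ and $v_j := \max\{u_j,-M-1\}$, so that these are uniformly bounded in $[-M-1,0]$, $\{v_j\}$ is monotone, and $v = (\lim v_j)^*$ by construction. Since $\{f_j \neq 0\} \subset \{u_j \geq -M\} \subset \{u_j > -M-1\}$, on which $v_j = u_j$, Remark~\ref{remk NPMA}(i) yields, for any Borel set $E \subset \Omega$,
\[
\int_E f_j\,\NP{u_j} = \int_{E\cap\{f_j\neq 0\}} f_j\,\NP{u_j} = \int_{E\cap\{f_j\neq 0\}} f_j\,(\ddc v_j)^n = \int_E f_j\,(\ddc v_j)^n,
\]
and analogously $f\,\NP{u} = f\,(\ddc v)^n$ as Borel measures. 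It therefore suffices to prove that $f_j (\ddc v_j)^n \to f (\ddc v)^n$ weakly under the now-classical assumption that the $v_j$ are uniformly bounded.

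By the Bedford--Taylor monotone convergence theorem, $(\ddc v_j)^n \to (\ddc v)^n$ weakly. For $\chi \in C_0(\Omega)$, I would write
\[
\int \chi f_j (\ddc v_j)^n - \int \chi f (\ddc v)^n = \int \chi (f_j-f)(\ddc v_j)^n + \int \chi f\bigl[(\ddc v_j)^n - (\ddc v)^n\bigr]
\]
and bound both terms using quasi-continuity of $\{f,f_j\}$: given $\varepsilon>0$, choose an open set $U$ with $\capa(U,\Omega)<\varepsilon$ on whose complement all $f$ and $f_j$ are (simultaneously) continuous, using that the relevant family is countable. On $U$ the Chern--Levine--Nirenberg estimate for uniformly bounded plurisubharmonic functions gives $\int_U (\ddc v_j)^n + \int_U (\ddc v)^n \le C\varepsilon$ uniformly in $j$, so the contributions supported in $U$ are $O(\varepsilon)$. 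On $\Omega\setminus U$, monotonicity together with Dini's theorem promotes $f_j \to f$ to locally uniform convergence, which handles the first term; for the second, I would approximate $\chi f\mathbb{1}_{\Omega\setminus U}$ by a genuinely continuous function compactly supported in $\Omega\setminus\overline{U}$ and apply weak convergence of $(\ddc v_j)^n$.

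The main obstacle is exactly this last step: $\chi f\mathbb{1}_{\Omega\setminus U}$ is not continuous on $\Omega$, so weak convergence of $(\ddc v_j)^n$ does not apply to it directly. The resolution is to balance the small-capacity error against the Chern--Levine--Nirenberg bound, approximating on a slightly shrunken set so that the approximation error is itself $O(\varepsilon)$. Once both terms are controlled by a constant multiple of $\varepsilon$, letting $\varepsilon\to 0$ closes the argument. This interplay between quasi-continuity and Bedford--Taylor weak convergence is the technical heart of the proof, whereas all other steps are routine reductions.
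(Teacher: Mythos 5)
Your reduction to the bounded case is exactly what the paper does: setting $v_j=\max\{u_j,-M-1\}$, $v=\max\{u,-M-1\}$, and using Remark~\ref{remk NPMA}(i) together with $\{f_j\neq 0\}\subset\{u_j>-M-1\}$ to rewrite $f_j\,\NP{u_j}=f_j\,(\ddc v_j)^n$ and $f\,\NP{u}=f\,(\ddc v)^n$. Where the two proofs diverge is the final convergence step: the paper simply invokes \cite[Theorem~3.2($4\Rightarrow 3$)]{BT87}, which says precisely that if $\mu_j\to\mu$ weakly (with uniform local mass control) and $f_j\to f$ quasi-everywhere with $f_j,f$ uniformly bounded and quasi-continuous, then $f_j\mu_j\to f\mu$ weakly. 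You instead re-derive that implication from scratch, using the countable exhaustion of quasi-continuity, the bound $\int_U(\ddc v_j)^n\leq (M+1)^n\,\capa(U,\Omega)$ for an open $U$ of small capacity (which is really the capacity bound, not the Chern--Levine--Nirenberg estimate you name), Dini's theorem on the good set, and then an approximation of $\chi f\mathbb{1}_{\Omega\setminus U}$ by a continuous function. Your argument is correct in spirit, but the last step can be stated more cleanly via a Tietze extension: extend $f|_{\Omega\setminus U}$ to a continuous $\tilde f$ on $\Omega$ with $0\leq\tilde f\leq 1$, apply Bedford--Taylor weak convergence to the continuous compactly supported test function $\chi\tilde f$, and absorb the error $\chi(f-\tilde f)$ --- which is supported in $U$ --- by the capacity bound; this avoids the somewhat vague ``slightly shrunken set'' step. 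Net effect: same reduction, same underlying machinery, but you re-prove the cited convergence theorem rather than using it as a black box; that is perfectly valid and gives a self-contained argument at the cost of length.
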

		\begin{proof} 
			By the definition, we have
			$$\mathbb{1}_{\{u>-M-1\}}\NP{u}=\mathbb{1}_{\{u>-M-1\}}(\ddc \max\{u,-k\})^n,$$
			for every $k\geq M+1$. Since $\{f\neq0\}\subset\{u>-M-1\}$, it follows that
			\begin{equation}\label{eq1}
		f\NP{u}=f(\ddc\max\{u,-M-1\})^n.
			\end{equation}
		Similar, we also have
			\begin{equation}\label{eq2}
			f_j\NP{u_j}=f_j(\ddc \max \{u_j,-M-1\})^n \text{ for every }j.
			\end{equation}
			Since $u_j$ converges monotonically to $u$, we have $(\ddc \max \{u_j,-M-1\})^n$ converges weakly to 
			$(\ddc\max\{u,-M-1\})^n$ as $j\rightarrow\infty$. Hence, it follows from
			 \cite[Theorem 3.2($4 \Rightarrow 3$)]{BT87} that
			\begin{equation}\label{eq3}
			f_j(\ddc\max\{u_j,-M-1\})^n\stackrel{\text{w}}{\rightarrow} f(\ddc\max\{u,-M-1\})^n.
			\end{equation}
			Combining (\ref{eq1}), (\ref{eq2}) and (\ref{eq3}), we get
			$$f_j \NP{u_j}\stackrel{\text{w}}{\rightarrow} f\NP{u},$$
			as desired.
		\end{proof}
		\begin{lemma}\label{lem: NP of limsup}
			Let $u_j$ be a monotone sequence of negative plurisubharmonic functions on $\Omega$ and let  $\mu$ be a positive Borel measure on $\Om$ such that $\NP{u_j} \geq \mu$ for every $j\in\Z^+$.
			 Assume that  $u := \Big(\lim\limits_{j \to \infty} u_j \Big)^*$ is not identically $-\infty$. Then $\NP{u} \geq \mu$.
		\end{lemma}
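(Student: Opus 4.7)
The plan is to reduce the statement to a weak-convergence argument on the super-level sets $\{u > -M\}$, by applying Lemma~\ref{lemma1} with suitable quasi-continuous cutoffs, and then let $M \to \infty$.

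First, I observe that $\mu$ vanishes on pluripolar sets, since $\mu \leq \NP{u_1}$ and the latter does by Remark~\ref{remk NPMA}(ii). As $u \in \pshn$ is not identically $-\infty$, the set $\{u = -\infty\}$ is pluripolar, hence $\mu$-null. Writing $\{u > -\infty\} = \bigcup_{M \in \Z^+} \{u > -M\}$, it therefore suffices to prove
\[\mathbb{1}_{\{u > -M\}} \mu \leq \mathbb{1}_{\{u > -M\}} \NP{u}\]
for each $M > 0$.

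Fix $M > 0$. For each $k \in \Z^+$, choose a continuous nondecreasing $g_k : \R \to [0,1]$ vanishing on $(-\infty, -M]$ and equal to $1$ on $[-M + 1/k, 0]$. Set $f := g_k(u)$ and $f_j := g_k(u_j)$; these are quasi-continuous, with $\{f \neq 0\} \subset \{u > -M\} \subset \{u \geq -M\}$ and $\{f_j \neq 0\} \subset \{u_j > -M\} \subset \{u_j \geq -M\}$, and $f_j \to f$ monotonically quasi-everywhere because both $g_k$ and the sequence $\{u_j\}$ are monotone. Lemma~\ref{lemma1} therefore delivers the weak convergence $f_j \NP{u_j} \to f \NP{u}$. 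For any nonnegative continuous $\psi$ with compact support in $\Om$, the hypothesis $\NP{u_j} \geq \mu$ yields
\[\int \psi f_j \, d\mu \leq \int \psi f_j \NP{u_j}.\]
Sending $j \to \infty$, the right-hand side tends to $\int \psi f \NP{u}$ by the weak convergence, whereas the left-hand side tends to $\int \psi f \, d\mu$ by monotone convergence in the increasing case and by dominated convergence with integrable majorant $\psi f_1$ in the decreasing case (the finiteness $\int \psi f_1 \, d\mu \leq \int \psi f_1 \NP{u_1} < \infty$ follows from the local finiteness of $\mathbb{1}_{\{u_1 > -M\}}(\ddc \max\{u_1, -M\})^n$). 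Varying $\psi$ yields $f \mu \leq f \NP{u}$ as Borel measures. Finally, letting $k \to \infty$, $f = g_k(u)$ increases to $\mathbb{1}_{\{u > -M\}}$ quasi-everywhere, and monotone convergence gives the desired inequality on $\{u > -M\}$.

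The main technical hurdle is the passage from the inequality $\NP{u_j} \geq \mu$ (valid on arbitrary Borel sets) to the corresponding inequality for $\NP{u}$, across a convergence that only holds in a weak sense. Lemma~\ref{lemma1} is designed precisely to bridge this gap by replacing indicator functions of super-level sets with their quasi-continuous counterparts, at which point the Bedford--Taylor-type weak convergence suffices.
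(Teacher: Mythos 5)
Your proof is correct and takes essentially the same approach as the paper: reduce to super-level sets, introduce quasi-continuous cutoff functions, invoke Lemma~\ref{lemma1} to pass the inequality $\NP{u_j}\geq\mu$ through the weak limit, and then let the cutoff parameter tend to infinity. The only difference is a minor implementation detail — the paper uses a single cutoff $f_k=\min\{\max\{u_1+k+1,0\},1\}$ built from the extremal term $u_1$ and hence constant in $j$, whereas you take cutoffs $g_k(u_j)$ varying with $j$ — but Lemma~\ref{lemma1} is stated precisely to accommodate both.
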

		\begin{proof}
			We give the proof for the case where $(u_j)_j$ is increasing. The case of decreasing sequence is similar and we leave it for
			the readers.
			
			For each $k\in\Z^+$, we denote 
			$$f_k=\min\left\lbrace \max\{u_1+k+1,0\},1\right\rbrace .$$
			Then $0\leq f_k\leq 1,$ $f_k|_{\{u_1\geq -k\}}=1$, $f_k|_{\{u_1\leq -k-1\}}=0$ and $f_k$ is continuous in plurifine topology.
			Since $u_1\leq u_2\leq...\leq u_k\leq...\leq u$, we have $\{f_k\neq 0\}\subset \{u_j>-k-1\}\cap\{u>-k-1\}$ for every $j$. Hence,
			it follows from Lemma \ref{lemma1} that
			$f_k\NP{u_j}$ converges weakly to $f_k\NP{u}$ as $j\rightarrow\infty$. Then, by the assumption $\NP{u_j} \geq \mu$, we have
			$$f_k\NP{u}\geq f_k\mu.$$
			Letting $k\rightarrow\infty$, we get
			\begin{equation}\label{eq1lem: NP of limsup}
			\NP{u}\geq \mathbb{1}_{\{u_1>-\infty\}}\mu.
			\end{equation}
			Moreover, the assumption $\NP{u_j} \geq \mu$ implies that $\mu$ vanishes on pluripolar sets. In particular, 
			\begin{equation}\label{eq2lem: NP of limsup}
			\mu=\mathbb{1}_{\{u_1>-\infty\}}\mu.
			\end{equation}
			Combining \eqref{eq1lem: NP of limsup} and \eqref{eq2lem: NP of limsup}, we obtain
			$$	\NP{u}\geq \mu.$$
			The proof is completed.
		\end{proof}
	\begin{lemma}\label{thm NPddcu leq u>-infty mu}
			Let $u_j$ be a monotone sequence of negative plurisubharmonic functions on $\Omega$ such that
			$u := \Big(\lim\limits_{j \to \infty} u_j \Big)^*$ is not identically $-\infty$. 	Let  $\mu$ be a positive Borel measure on $\Om$.
			Assume that there exists a plurifine open subset $U$ of $\Om$ such that 
			$$\mathbb{1}_{U}\NP{u_j}\leq \mu,$$
			for every $j$. Then
			$$\mathbb{1}_{U}\NP{u}\leq \mu.$$
	\end{lemma}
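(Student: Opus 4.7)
The plan is to adapt the proof of Lemma~\ref{lem: NP of limsup}, replacing its single cutoff $f_k$ by a product $g_l f_k$, where $g_l$ is a quasi-continuous approximation of $\mathbb{1}_U$ from below. This extra layer is needed because Lemma~\ref{lemma1} requires quasi-continuous test functions, while $\mathbb{1}_U$ is only plurifine continuous. The limits will then be taken in the order $j\to\infty$ (via Lemma~\ref{lemma1}), then $l\to\infty$, then $k\to\infty$.

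For the cutoff, I take $f_k=\min(\max(u+k+1,0),1)$ when $u_j\downarrow u$, and $f_k=\min(\max(u_1+k+1,0),1)$ when $u_j\uparrow u$; in either case $\{f_k\neq 0\}$ is contained in $\{u\geq -k-1\}\cap\{u_j\geq -k-1\}$ for every $j$. To build $g_l$, I invoke the quasi-Lindel\"of property (Theorem~\ref{Lindelof}) to write $U=\bigcup_{j\geq 1}O_j\cup N$ with $N$ pluripolar and each $O_j=W_j\cap\{w_j>0\}$ basic, where $W_j$ is Euclidean open and $w_j\in\text{PSH}(W_j)$. For each $j,l$, I pick a continuous cutoff $\eta_{j,l}\in C_c(W_j,[0,1])$ with $\eta_{j,l}\nearrow\mathbb{1}_{W_j}$ as $l\to\infty$, and set $h_{j,l}:=\eta_{j,l}\cdot\min(\max(lw_j,0),1)$ on $W_j$, extended by zero outside. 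Each $h_{j,l}$ is quasi-continuous on $\Omega$ (continuous composition with the quasi-continuous $w_j$, with $\eta_{j,l}$ damping at $\partial W_j$), bounded by $\mathbb{1}_{O_j}\leq\mathbb{1}_U$, and $h_{j,l}\nearrow\mathbb{1}_{O_j}$ as $l\to\infty$. Setting $g_l:=\max_{j\leq l}h_{j,l}$, with parameters arranged to make the double-indexed family monotone in $l$, I obtain a quasi-continuous increasing sequence with $g_l\leq\mathbb{1}_U$ and $g_l\nearrow\mathbb{1}_U$ off the pluripolar set $N$.

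With these in place, I apply Lemma~\ref{lemma1} with $f=f_j=g_lf_k$ (constant in $j$): the support condition holds by construction, and $g_lf_k$ is quasi-continuous bounded by $1$, so $g_lf_k\NP{u_j}\to g_lf_k\NP{u}$ weakly. Combining $g_lf_k\leq\mathbb{1}_U$ with the hypothesis yields $g_lf_k\NP{u_j}\leq\mathbb{1}_U\NP{u_j}\leq\mu$, which passes to the weak limit as $g_lf_k\NP{u}\leq\mu$ (for any test $\phi\in C_c(\Omega),\,\phi\geq 0$, we get $\int\phi\,d(g_lf_k\NP{u})\leq\int\phi\,d\mu$, hence the bound as measures). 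Sending $l\to\infty$ (monotone convergence, using that $\NP{u}$ does not charge pluripolar sets) gives $\mathbb{1}_Uf_k\NP{u}\leq\mu$, and sending $k\to\infty$ (with $f_k\nearrow 1$ off a pluripolar set) yields $\mathbb{1}_U\NP{u}\leq\mu$. The main obstacle is the construction of $g_l$: one must simultaneously secure quasi-continuity, the upper bound $g_l\leq\mathbb{1}_U$, and the monotone limit $g_l\nearrow\mathbb{1}_U$ quasi-everywhere, which is exactly what the compact cutoffs $\eta_{j,l}$ buy us; once this is in hand, the limiting chase is routine.
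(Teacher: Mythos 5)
Your proposal is correct and follows essentially the same strategy as the paper's proof: both invoke the quasi-Lindel\"of property to access the basic plurifine open pieces of $U$, build quasi-continuous cutoffs approximating $\mathbb{1}_U$ from below (the paper uses $\chi\,\max\{\min\{4^k v-2^k,1\},0\}$ after first reducing to a single $U\in\mathcal{B}$, while you assemble a global $g_l$ from the $h_{j,l}$), feed them together with the $f_k$ truncation into Lemma~\ref{lemma1}, and pass to the limit by monotone convergence using that $\NP{u}$ charges no pluripolar set. The only difference is bookkeeping --- reduce-then-approximate versus approximate-globally --- and both rest on the same ingredients.
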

	\begin{proof}
			We give the proof for the case where $(u_j)_j$ is decreasing. The case of increasing sequence is similar and we leave it for	the readers. 
			
			By the quasi-Lindel\"of property of plurifine topology (see Theorem \ref{Lindelof}) and by the fact
			that $\mathcal{B}$ is a basis of plurifine topology, the problem is reduced to the case 
			$U\in\mathcal{B}$, i.e.,
			$$U=\{z\in V: v(z)>0\},$$
			where $V$ is an open subset of $\Om$ and $v$ is a plurisubharmonic function on $V$.
			
			Let $\chi\in C_c(V)$ such that $0\leq\chi\leq 1$ and denote
			$$g_{\chi, k}=\chi \max\{ \min\{4^{k}v-2^k, 1\}, 0\},$$
			for every $k\in\Z^+$. We have $g_{\chi, k}$ is a quasi continuous function on $\Om$.
			
				Denote 
				$$f_k=\min\left\lbrace \max\{u+k+1,0\},1\right\rbrace .$$
				Then $0\leq f_k\leq 1,$ $f_k|_{\{u\geq -k\}}=1$, $f_k|_{\{u\leq -k-1\}}=0$ and $f_k$ is quasi-continuous.
				Since $u_1\geq u_2\geq...\geq u_k\geq...\geq u$, we have $\{f_k\neq 0\}\subset \{u_j>-k-1\}\cap\{u>-k-1\}$ for every $j$. Hence, it follows from Lemma \ref{lemma1} that
				$f_kg_{\chi, k}\NP{u_j}$ converges weakly to $f_kg_{\chi, k}\NP{u}$ as $j\to\infty$.
				Moreover, since $\mbox{supp} g_{\chi, k}\subset U$ and $0\leq f_k, g_{\chi, k}\leq 1$, we have
				 $f_kg_{\chi, k}\NP{u_j}\leq\mu$ for every $j$. Then
				 $$f_kg_{\chi, k}\NP{u}\leq \mu.$$
				 Letting $k\to\infty$ and $\chi\nearrow \mathbb{1}_V$, we get
				 $$\mathbb{1}_U\NP{u}\leq \mu.$$
				 The proof is completed.
	\end{proof} 
	\section{An envelope of plurisubharmonic functions}\label{secenvelope}
	The main result of this section is as follows:
	\begin{theorem}\label{the NPMA of envelope}
		Let $\mu$ be a positive Borel measure on $\Om$ and let $U\subset\Om$ be a plurifine open set such that
		$$\NP{\varphi}\geq \mathbb{1}_U\mu,$$
		for some $\varphi\in\pshn$. Denote $$u=\left(\sup\{w\in\pshn: w\leq H \mbox{ on } \Om\setminus U, \NP{w}\geq \mathbb{1}_U\mu \}\right)^*,$$
		where $H$ is a negative plurisubharmonic function on $\Om$.
		Then $\mathbb{1}_U\NP{u}=\mathbb{1}_U\mu.$
	\end{theorem}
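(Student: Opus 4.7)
The plan is to prove the inequalities $\mathbb{1}_U \NP{u} \geq \mathbb{1}_U \mu$ and $\mathbb{1}_U \NP{u} \leq \mathbb{1}_U \mu$ separately. The first will follow by showing that the envelope itself is admissible, and the second by a local balayage argument contradicting the maximality of $u$.

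For the lower bound, denote
\[
\mathcal{S}:=\{w\in\pshn:\ w\leq H\ \text{on}\ \Om\setminus U,\ \NP{w}\geq \mathbb{1}_U\mu\}.
\]
Choquet's lemma supplies a countable family $\{w_j\}\subset \mathcal{S}$ with $(\sup_j w_j)^*=u$. Replacing $w_j$ by $\max(w_1,\dots,w_j)$ and invoking Lemma \ref{lem: NP of max} keeps the sequence in $\mathcal{S}$ while making it pointwise increasing, so Lemma \ref{lem: NP of limsup} applied to its regularized limit $u$ yields $\NP{u}\geq \mathbb{1}_U\mu$ on $\Om$, hence in particular on $U$.

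For the reverse inequality, assume for contradiction that $\mathbb{1}_U \NP{u}(E_0)>\mathbb{1}_U \mu(E_0)$ for some Borel set $E_0\subset U$. Since $\NP{u}$ is carried by $\{u>-\infty\}$, $E_0$ may be shrunk into $U\cap\{u>-M\}$ for some $M>0$, and on this plurifine open set $\NP{u}$ coincides with the classical Bedford--Taylor measure $(\ddc u_M)^n$ where $u_M=\max(u,-M)$. Using Lemma \ref{open subsets decreasing to plurifine open set}, choose a Euclidean ball $B\subset\subset\Om$ so that, after excising a pluripolar set, $B$ lies essentially in $U\cap\{u>-M\}$ and $(\ddc u_M)^n(B)>\mathbb{1}_U\mu(B)$. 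On $B$ solve the classical Dirichlet problem for the bounded Monge-Amp\`ere equation with boundary data $u_M|_{\partial B}$ and right-hand side $\mathbb{1}_B\mu$, whose mass is finite because it is dominated by $(\ddc u_M)^n$ on $B$. This produces a bounded $v\in\text{PSH}(B)$ with $(\ddc v)^n=\mathbb{1}_B\mu$ and $v=u_M$ on $\partial B$. The comparison principle forces $v\geq u_M$ on $B$, and the strict domination rules out $v\equiv u_M$, so $v>u_M$ on a set of positive Lebesgue measure. Gluing $\hat u:=v$ on $B$ and $\hat u:=u$ on $\Om\setminus B$, and using the plurifine locality of the non-pluripolar measure (Definition \ref{def NPMA}, Remark \ref{remk NPMA}(i)), one checks that $\hat u\in\pshn$, $\hat u\in\mathcal{S}$, and $\hat u>u$ on a non-pluripolar subset of $B$, contradicting $u=(\sup\mathcal{S})^*$.

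The main obstacle is that $U$ is only plurifine open, so it may have empty Euclidean interior and one cannot freely pick a Euclidean ball $B\subset U$ on which to solve the Dirichlet problem. A secondary subtlety is preserving the constraint $\hat u\leq H$ on $\Om\setminus U$ through the modification: the ball $B$ may meet $\Om\setminus U$, and although Lemma \ref{open subsets decreasing to plurifine open set} should allow one to arrange that $B\setminus U$ is pluripolar, one must then argue carefully that the non-pluripolar measure of the glued function and the membership in $\mathcal{S}$ survive this minor leakage. Overcoming this tension between the plurifine nature of $U$ and the Euclidean topology required by the classical Dirichlet solver is the technical heart of the argument.
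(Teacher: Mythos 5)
Your lower-bound argument (Choquet's lemma combined with Lemmas~\ref{lem: NP of max} and \ref{lem: NP of limsup}) matches the paper's Lemma~\ref{cor envelop} and is correct. The upper bound, however, has genuine gaps beyond what you flag as a ``technical heart.''

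First, the ball-selection step is not merely delicate: it is impossible in general. A nonempty plurifine open set $U$ need not contain any Euclidean ball even modulo a pluripolar set. For instance, take a dense sequence $\{a_k\}$ in $\Om$ and $\varphi = \sum c_k \log|z-a_k|$ with $\sum c_k$ small; then $U=\{\varphi>-1\}$ is plurifine open, but every Euclidean ball meets some Euclidean open neighbourhood of an $a_k$ contained in $\{\varphi\le -1\}$, which is not pluripolar. So the set $B\setminus U$ cannot be made pluripolar. The paper sidesteps this entirely by approximating $U$ \emph{from the outside}: Lemma~\ref{open subsets decreasing to plurifine open set} produces a decreasing sequence of Euclidean open $U_j\supset U$ with $\bigcap_j U_j\setminus U$ pluripolar. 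One proves the Euclidean-open case first (there one \emph{can} take balls $B\Subset U_j$), then lets $j\to\infty$ and uses Lemmas~\ref{lem: NP of limsup} and \ref{thm NPddcu leq u>-infty mu} to pass the equality $\mathbb{1}_{U_j}\NP{u_j}=\mathbb{1}_U\mu$ through the decreasing limit, followed by a Josefson-type argument to show the decreasing limit equals the envelope. A contradiction argument inside $U$ is not needed.

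Second, even in the Euclidean-open case your gluing step is set up incorrectly. You solve the Dirichlet problem on $B$ with boundary data $u_M|_{\partial B}$, but $u_M\ge u$, so the glued function need not be plurisubharmonic across $\partial B$: one would need $\limsup_{B\ni z\to z_0}v(z)\le u(z_0)$ for $z_0\in\partial B$, and $u_M(z_0)$ can exceed $u(z_0)$. The paper instead uses the balayage $u_B$ (the smallest maximal plurisubharmonic majorant of $u$ in $B$) as the reference potential, solves $(\ddc w_B)^n=\mathbb{1}_B\mu$ in the Cegrell class $\mathcal{N}(B,u_B)$ via Lemma~\ref{lemAhag} (which also handles non-continuous boundary data and nontrivial total mass), and uses $\limsup_{B\ni z\to z_0}u_B(z)=u(z_0)$ to guarantee the glued function is plurisubharmonic. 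Note moreover that the paper does not need a contradiction hypothesis: it shows directly that the glued function lies in $S$, hence equals $u$ on $B$, which gives the equality of measures.

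Third, you say nothing about the general case. Your Dirichlet solver requires $\mu(B)<\infty$ and bounded boundary data, but in the theorem $\mu$ may have locally infinite mass and $H$ is an arbitrary negative psh function. The paper handles this with a double truncation: replace $\mu$ by $\mathbb{1}_{U_j}\mu$ with $U_j=\{z\in U:\ d(z,\partial\Om)>2^{-j},\ \varphi(z)>-2^j\}$ (finite mass thanks to the hypothesis $\NP{\varphi}\ge\mathbb{1}_U\mu$), replace $H$ by $H_k=\max\{H,-k\}$, apply the bounded/finite-mass case (Theorem~\ref{the: NPMA eq on plurifine open}), and pass to the double limit using Lemmas~\ref{lem: NP of limsup}, \ref{thm NPddcu leq u>-infty mu}, \ref{lem upper reg}, \ref{NP u+v > NP u} and Josefson's theorem. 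Without this reduction your argument does not address the statement as given.
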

		In order to prove the above theorem, we need the following lemmas:
			\begin{lemma}\label{cor envelop}
				Let $S$ be a family of negative plurisubharmonic functions on $\Omega$ and let  $\mu$ be a positive Borel measure on $\Om$ such that
				$\NP{w} \geq \mu$ for every $w\in S$. Denote
				$$u_S=\left(\sup\{w: w\in S\}\right)^*.$$
				Then $\NP{u_S}\geq\mu$.
			\end{lemma}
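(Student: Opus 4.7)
The plan is to reduce to an increasing countable limit and then invoke Lemma \ref{lem: NP of limsup}. If $S$ is empty the statement is vacuous, so assume $S\neq\emptyset$; then $u_S\geq w$ for any $w\in S$, so in particular $u_S$ is not identically $-\infty$.

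First I would apply Choquet's lemma to extract a sequence $\{w_j\}_{j\geq 1}\subset S$ with
$$\bigl(\sup_{j\geq 1} w_j\bigr)^{*}=u_S.$$
Next I would pass to the sequence of finite maxima
$$v_k:=\max\{w_1,\ldots,w_k\},\qquad k\geq 1,$$
which is an increasing sequence of negative plurisubharmonic functions on $\Omega$ with $(\lim_{k\to\infty} v_k)^{*}=u_S$.

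The key step is to check that $\NP{v_k}\geq\mu$ for every $k$. This follows by induction on $k$: the base case $k=1$ is the hypothesis $\NP{w_1}\geq\mu$, and the inductive step uses Lemma \ref{lem: NP of max} applied to $v_{k-1}$ and $w_k$, both of which satisfy $\NP{\cdot}\geq\mu$, to conclude $\NP{\max\{v_{k-1},w_k\}}\geq\mu$, i.e. $\NP{v_k}\geq\mu$.

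Finally, since $\{v_k\}$ is an increasing sequence of negative plurisubharmonic functions with upper semicontinuous regularization $u_S\not\equiv -\infty$ and $\NP{v_k}\geq\mu$ for every $k$, Lemma \ref{lem: NP of limsup} yields $\NP{u_S}\geq\mu$, completing the proof. The only subtlety is ensuring the monotone version of Lemma \ref{lem: NP of limsup} is applied in the increasing direction, but this is precisely the case explicitly treated in that lemma.
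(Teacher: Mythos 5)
Your proof is correct and follows essentially the same route as the paper's: Choquet's lemma to extract a countable subfamily, passing to finite maxima to get an increasing sequence, Lemma \ref{lem: NP of max} inductively, and then Lemma \ref{lem: NP of limsup}. The only (minor) addition is your explicit handling of the vacuous case $S=\emptyset$.
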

			\begin{proof}
				By Choquet's lemma \cite[Lemma 2.3.4]{Kli91}, there exists a sequence $\{u_j\}_{j\in\Z^+}\subset S$ such that
				$$u_S=\left(\sup\{u_j: j\in\Z^+\}\right)^*.$$
				For every $j\in\Z^+$, we denote
				$$v_j=\max\{u_1, u_2,..., u_j\}.$$
				Then $\{v_j\}$ is an increasing sequence  and  $u_S=(\lim_{j\to\infty}v_j)^*$. Moreover, it follows from Lemma \ref{lem: NP of max} that for every $j \in \Z^+$,
				$$\NP{v_j}\geq\mu.$$
				 Hence, by Lemma \ref{lem: NP of limsup}, we have 
				$$\NP{u_S}\geq\mu.$$
			\end{proof}
				\begin{lemma}\label{lem upper reg}
					Let $S$ be a family of negative plurisubharmonic functions on $\Omega$. Assume that there exist a set $W\subset\Om$ and a function
					$H: W\rightarrow\R$ such that $w|_W\leq H$ for every $w\in S$. Put
					$$u_S=(\sup\{w: w\in S\})^*.$$
					Then, there exists a pluripolar set $N\subset \Om$ such that $u_S\leq H$ on $W\setminus N$. 
				\end{lemma}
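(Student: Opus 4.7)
The plan is to exploit Choquet's lemma to reduce the envelope $u_S$ to the upper semicontinuous regularization of an increasing countable subfamily, and then invoke the classical Bedford--Taylor theorem that the ``negligible set'' where a supremum of plurisubharmonic functions differs from its usc regularization is pluripolar.

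First I would apply Choquet's lemma \cite[Lemma 2.3.4]{Kli91} to obtain a countable subfamily $\{u_j\}_{j\in\Z^+}\subset S$ with
$$u_S=\bigl(\sup_{j}u_j\bigr)^*.$$
Then I would replace this by the increasing sequence $v_j:=\max\{u_1,\dots,u_j\}\in\pshn$. Because each $u_i\leq H$ on $W$, the finite pointwise maximum still satisfies $v_j\leq H$ on $W$. A quick check shows $(\sup_j v_j)^*=u_S$: the inequality ``$\geq$'' is clear since $v_j\geq u_j$, and ``$\leq$'' holds because each $u_i\leq u_S$ implies $v_j\leq u_S$, after which usc regularization of an inequality between psh functions preserves it.

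Next I would set $v:=\lim_{j\to\infty}v_j=\sup_j v_j$, so that $u_S=v^*$ and $v\leq H$ on $W$. By the classical Bedford--Taylor theorem on negligible sets (see e.g.\ \cite[Proposition 4.7.2]{Kli91}), the set
$$N:=\{z\in\Om:\ v(z)<v^*(z)\}$$
is pluripolar. For every $z\in W\setminus N$ one then has $u_S(z)=v^*(z)=v(z)\leq H(z)$, which is the desired conclusion.

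I do not anticipate any genuine obstacle in this argument; the only point requiring a little care is the passage from the arbitrary family $S$ to a countable \emph{increasing} subfamily, which is why I take finite maxima $v_j$ rather than the original $u_j$ — this preserves both the pointwise bound on $W$ and the identity $(\sup_j v_j)^*=u_S$, and simultaneously legitimizes the use of the negligible-set theorem for a pointwise monotone limit.
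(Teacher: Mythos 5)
Your proof is correct and rests on the same essential fact as the paper's: the set where a supremum of plurisubharmonic functions differs from its upper semicontinuous regularization is negligible, hence pluripolar. The paper's proof is slightly more direct in that it skips the Choquet reduction entirely — it works with the raw pointwise supremum $v_S=\sup\{w:w\in S\}$ over the whole family, notes that $\{u_S>v_S\}$ is negligible hence pluripolar, and then (after detouring through Josefson's theorem and an $\epsilon\psi$-perturbation) concludes $u_S\leq H$ off a pluripolar set. Your Choquet step and passage to the increasing sequence $v_j=\max\{u_1,\dots,u_j\}$ are harmless but not needed, since the negligible-set theorem already applies to arbitrary families of plurisubharmonic functions locally bounded above; conversely, once one has $v_S\leq H$ on $W$ and knows $\{u_S>v_S\}$ is pluripolar, the Josefson/$\epsilon$-limit maneuver in the paper is also superfluous, as one can simply take $N=\{u_S>v_S\}$ and observe $u_S=v_S\leq H$ on $W\setminus N$. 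In short: same underlying idea, two slightly over-decorated write-ups of it, and yours is valid.
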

				\begin{proof}
					Set $v_S=\sup\{w: w\in S\}$. Since negligible sets are pluripolar, we have $\{u_S>v_S\}$ is pluripolar. 
					By Josefson's theorem, there exists $\psi \in \pshn$ such that $\{u_S>v_S\} \subset \{ \psi = -\infty\}$. 
					Therefore, for all $\epsilon > 0,$
					$$u_S + \epsilon \psi \leq v_S.$$
					Since $v_S\leq H$ on $W$, it follows that $u_S + \epsilon \psi \leq H$ on $W$. Letting $\epsilon\searrow 0$, we get
					$u_S\leq H$ on $W\setminus\{\psi=-\infty\}$.
					
					The proof is completed.
				\end{proof}
				\begin{lemma}\label{lemAhag}
					Let $u$ be a bounded, negative plurisubharmonic function on $\Om$ and let $D\Subset\Om$ be an open ball. Denote by $u_D$
					 the smallest maximal plurisubharmonic majorant of $u$ in $D$. Assume that  
					 $\mu$ is a non-pluripolar positive Radon measure on $D$ such that $\mu(D)<+\infty$. Then, there exists
					  $v\in\mathcal{F}(D, u_D)$ such that $(dd^c v)^n=\mu$. Here, $\mathcal{F}(D, u_D)$ is the set of plurisubharmonic functions $\varphi$ on $D$ satisfying $u_D+w\leq \varphi\leq u_D$ for some $w\in\mathcal{F}(D)$.
				\end{lemma}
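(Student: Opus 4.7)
The plan is to apply Cegrell's classical existence theorem in the class $\mathcal{F}(D)$ and then lift the resulting solution to $\mathcal{F}(D,u_D)$ by an envelope construction. Since $D$ is hyperconvex (being a ball) and $\mu$ is non-pluripolar with $\mu(D)<\infty$, \cite[Lemma 5.14]{Ceg04} furnishes $w\in\mathcal{F}(D)$ with $(dd^cw)^n=\mu$. Setting $\phi:=u_D+w$, the multilinear expansion
$$(dd^c\phi)^n=\sum_{k=0}^{n}\binom{n}{k}(dd^cu_D)^{k}\wedge(dd^cw)^{n-k}\geq (dd^cw)^n=\mu$$
shows that $\phi\in\mathcal{D}(D)$ is a subsolution bounded above by $u_D$, and bounded below by $u_D+w$ with $w\in\mathcal{F}(D)$.

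Next I would introduce the Perron family
$$\mathcal{S}:=\{\psi\in\text{PSH}^{-}(D):\ \phi\leq\psi\leq u_D,\ (dd^c\psi)^n\geq\mu\},$$
which contains $\phi$, and put $v:=(\sup\mathcal{S})^{*}$. The sandwich $\phi\leq v\leq u_D$ (the upper bound via Lemma~\ref{lem upper reg} applied with $W=D$ and $H=u_D$, the lower bound because $\phi\in\mathcal{S}$) places $v$ in $\mathcal{F}(D,u_D)$, with $w$ playing the role of the $\mathcal{F}(D)$-minorant, and guarantees $v\in\mathcal{D}(D)$. The inequality $(dd^cv)^n\geq\mu$ then follows from Choquet's lemma together with the classical analogues (for the full Monge-Amp\`ere operator in the $\mathcal{F}$-class) of Lemmas~\ref{lem: NP of max} and~\ref{cor envelop}, applied to an increasing sequence of finite maxima drawn from $\mathcal{S}$.

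The main obstacle is the reverse inequality $(dd^cv)^n\leq\mu$, which I would establish by a local balayage. Given a ball $B\Subset D$, I would construct $\tilde v\in\text{PSH}(B)$ solving $(dd^c\tilde v)^n=\mathbb{1}_B\mu$ in $B$ and matching $v$ on $\partial B$; the existence of such $\tilde v$ is obtained by approximating $v$ by $\max(v,-k)$ and $\mu$ by bounded measures, solving the classical Dirichlet problem at each level via Bedford-Taylor, and passing to the limit using the monotone convergence tools of Section~\ref{secsta} adapted to the full Monge-Amp\`ere operator. Two applications of the comparison principle on $B$ then give $v\leq\tilde v\leq u_D$: the first because $(dd^c\tilde v)^n=\mu\leq (dd^cv)^n$ on $B$ with matching boundary data, the second because $u_D$ is maximal so $(dd^cu_D)^n=0\leq (dd^c\tilde v)^n$. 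Extending $\tilde v$ by $v$ outside $B$ produces $\hat v\in\mathcal{S}$ with $\hat v\geq v$, hence $\hat v=v$ by the maximality of $v$, whence $(dd^cv)^n=\mu$ on $B$. Sweeping $B$ across $D$ yields the conclusion. The delicate point is the construction and limiting behavior of $\tilde v$ when $v|_{\partial B}$ is unbounded; that is where most of the technical care will be needed.
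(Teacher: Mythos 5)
Your strategy is genuinely different from the paper's: the paper approximates the boundary data by smooth $u_j\searrow u$, solves on $D$ with continuous boundary value $u_{j,D}$ via \AA hag's existence theorem \cite[Theorem 3.4]{Ahag07}, checks monotonicity via the comparison principle, and passes to the decreasing limit. You instead build a subsolution $\phi=u_D+w$ from Cegrell's lemma, form the Perron envelope over the obstacle $u_D$, and then try to show equality $(dd^cv)^n=\mu$ by balayage on balls. The first two thirds of your argument are fine; the gap is in the balayage step.

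To carry out the balayage you must produce $\tilde v$ on a ball $B\Subset D$ with $(dd^c\tilde v)^n=\mathbb{1}_B\mu$ and boundary data $v|_{\partial B}$, where $\mu$ is an arbitrary non-pluripolar Radon measure of finite mass. That is precisely the Dirichlet problem whose solvability is the content of the lemma you are proving (applied on $B$ instead of $D$), i.e.\ of \AA hag's \cite[Theorem 3.4]{Ahag07}. Your proposed construction ``approximate $v$ by $\max(v,-k)$ and $\mu$ by bounded measures, solve via Bedford--Taylor, pass to the limit'' does not close this circle: Bedford--Taylor's classical Dirichlet theory handles right-hand sides with $L^\infty$ (or, with Kolodziej's extension, $L^p$) density, but a general non-pluripolar $\mu$ with $\mu(D)<\infty$ can be mutually singular with Lebesgue measure, and truncating/smoothing $\mu$ to $\mu_m$ only gives weak convergence of measures. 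Without a Cegrell-type decomposition $\mu=f\,(dd^c\psi)^n$ providing uniform control (which you do not invoke and which is the engine behind both \cite[Lemma 5.14]{Ceg04} and \cite[Theorem 3.4]{Ahag07}), there is no reason the solutions $\tilde v_{k,m}$ converge to a solution for $\mu$ with the prescribed boundary values. You correctly flag the unboundedness of $v|_{\partial B}$ as delicate, but the more fundamental problem is that even for bounded boundary data this step already requires exactly the lemma at hand; this is also how the paper itself uses Lemma \ref{lemAhag}, namely as the balayage input in the proof of Theorem \ref{the: NPMA eq on plurifine open}. The paper's route via boundary-data approximation and \AA hag's theorem avoids the circularity entirely and is considerably shorter.
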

				\begin{proof}
					This lemma is an immediate corollary of \cite[Theorem 3.7]{ACCP}.  Here we will give a proof that does not use
					 \cite[Theorem 3.7]{ACCP}.
					 
					 Let $u_j$ be a sequence of smooth plurisubharmonic functions decreasing to $u$ on a neighborhood $V$ of $\overline{D}$.
					 It is classical that for every $j$, there exists a unique maximal plurisubharmonic function $u_{j, D}$ on $D$ 
					 such that  $\lim_{D\ni z\to z_0}u_{j, D}(z)=u_j(z_0)$ for every $z_0\in\partial D$. It is easy to check that
					 $u_{j, D}$ is decreasing to $u_D$ as $j\rightarrow\infty$.
		
					  By \cite[Theorem 3.4]{Ahag07}, there exists a unique $v_j\in \mathcal{F}(D, u_{j, D})$ such that $(dd^c v_j)^n=\mu$.
					 Moreover, it follows from the comparison principle \cite[Theorem 3.2]{Ahag07} that $v_j$ is a decreasing sequence and
					 $$v_0+ u_{j, D}\leq v_j\leq  u_{j, D},$$
					 where $v_0$ is the unique function in $\mathcal{F}(D)$ satisfying $(dd^cv_0)^n=\mu$.
					 Denote $v=\lim_{j\to\infty}v_j$. We have $v_0\leq u_D\leq v\leq u_D$ and $(dd^c v)^n=\mu$.
					 
					This finishes the proof.
					\end{proof}
			Now we begin to prove Theorem \ref{the NPMA of envelope}.
			We first consider the case where $H$ is bounded and $\mu(\Om)<+\infty$.
				\begin{theorem}\label{the: NPMA eq on plurifine open}
					Let $\mu$ be a non-pluripolar positive Radon measure on $\Omega$ such that $\mu(\Om)<+\infty$. Let $U$ be a plurifine open subset of $\Om$ and $H \in L^\infty (\Omega)$. Denote
					\[
					u = \Big( \sup \{ w\in \pshn \colon w \leq H \text{ on } \Omega \setminus U, \NP{w} \geq \mathbb{1}_U \mu \} \Big)^*.
					\]
					Then $u\in\mathcal{D}(\Om)$ and $\mathbb{1}_U (dd^c u)^n = \mathbb{1}_U \mu. $
				\end{theorem}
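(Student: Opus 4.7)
The plan is to prove both inequalities $\mathbb{1}_U \NP{u} \geq \mathbb{1}_U \mu$ and $\mathbb{1}_U \NP{u} \leq \mathbb{1}_U \mu$, from which $u \in \mathcal{D}(\Omega)$ will follow as a byproduct. The lower bound is immediate: every competitor $w$ in the defining family satisfies $\NP{w} \geq \mathbb{1}_U \mu$, so Lemma~\ref{cor envelop} gives $\NP{u} \geq \mathbb{1}_U \mu$ and hence $\mathbb{1}_U \NP{u} \geq \mathbb{1}_U \mu$. Moreover Lemma~\ref{lem upper reg}, applied with $W = \Omega \setminus U$ and bound $\|H\|_\infty$, shows $u \leq \|H\|_\infty$ off a pluripolar set, so $u$ is bounded above globally.

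For the reverse inequality I would localize: fix an arbitrary open ball $B \Subset \Omega$ and aim for $\mathbb{1}_{U \cap B}(\ddc u)^n = \mathbb{1}_{U \cap B}\mu$. For each $k \in \Z^+$ set $u_k = \max\{u, -k\}$ and let $(u_k)_B$ be its smallest maximal plurisubharmonic majorant on $B$. Lemma~\ref{lemAhag} produces $v_{k,B} \in \mathcal{F}(B, (u_k)_B)$ with $(\ddc v_{k,B})^n = \mathbb{1}_U \mu|_B$. Since $(\ddc u)^n \geq \mathbb{1}_U \mu$ on $\{u > -\infty\}$ by the first step, and since $v_{k,B} \to (u_k)_B \geq u$ at $\partial B$, the comparison principle gives $u \leq v_{k,B}$ on $B$. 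Letting $k \to \infty$, the bounded functions $v_{k,B}$ converge to a plurisubharmonic $v_B$ with $(\ddc v_B)^n = \mathbb{1}_U \mu|_B$ and $v_B \geq u$ on $B$. The crucial step is to promote $v_B$ into a genuine global competitor $\tilde u$ in the envelope family, with $\tilde u \geq u$ and $\tilde u = v_B$ on $B$; the envelope property then forces $\tilde u \leq u$, so $u = v_B$ on $B$ outside a pluripolar set, and by upper semicontinuity $u \equiv v_B$ on $B$. This simultaneously yields $\mathbb{1}_U(\ddc u)^n = \mathbb{1}_U \mu$ on $B$ and local boundedness giving $u \in \mathcal{D}(\Omega)$. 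Covering $\Omega$ by such balls finishes the proof.

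The main obstacle is constructing the competitor $\tilde u$. Because $v_B$ is asymptotic to a majorant of $u$ at $\partial B$ which may strictly exceed $u$ at boundary points, the naive paste of $v_B$ on $B$ with $u$ on $\Omega \setminus B$ need not be upper semicontinuous, and even its regularization may fail plurisubharmonicity across $\partial B$. I expect to resolve this by approximating $U$ from outside via a decreasing sequence of open neighborhoods (Lemma~\ref{open subsets decreasing to plurifine open set}) and performing the gluing on a slightly enlarged open set where the boundary behavior of $v_B$ is better controlled, before passing to the limit with the help of the monotone stability results of Section~\ref{secsta}, namely Lemmas~\ref{lem: NP of max}, \ref{lem: NP of limsup} and \ref{thm NPddcu leq u>-infty mu}, in order to preserve the Monge--Amp\`ere bound $\NP{\tilde u} \geq \mathbb{1}_U \mu$ throughout the limit and to control the contribution of the pluripolar discrepancy $\bigcap_l V_l \setminus U$.
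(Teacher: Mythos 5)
Your overall strategy is the same as the paper's (Lemma~\ref{cor envelop} for $\NP{u}\geq\mathbb{1}_U\mu$, Lemma~\ref{lem upper reg} for the upper bound by $H$, Lemma~\ref{lemAhag} to solve a local Dirichlet problem, a comparison principle, a gluing step, the envelope property, and the outer approximation of $U$ by open sets), but there is a genuine gap exactly at the step you flagged, and your diagnosis of why the gluing is delicate is off. The obstruction is not primarily upper semicontinuity across $\partial B$: it is that your glued candidate $\tilde u$ equals $v_B$ on $B\cap(\Omega\setminus U)$, and on that set $\mathbb{1}_U\mu$ vanishes, so $v_B$ is \emph{maximal} there and is pulled up toward its values near $\partial B$ (which are comparable to $(u_k)_B\geq u$). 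Nothing forces $v_B\leq H$ on $B\cap(\Omega\setminus U)$, so $\tilde u$ need not belong to the defining family, and the envelope inequality $\tilde u\leq u$ is not available. With an arbitrary ball $B\Subset\Omega$ this cannot be repaired.

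The fix is to change the order of operations. First prove the statement when $U$ is open in the usual topology, and there choose the ball $B\Subset U$. Then $\Omega\setminus U\subset\Omega\setminus\overline B$, the glued function equals $u$ on all of $\Omega\setminus U$, and the constraint $\leq H$ is inherited from $u$ (up to a pluripolar set, handled by Josefson's theorem and a small $\epsilon\psi$ perturbation). The measure condition also becomes clean: on $B\Subset U$ one solves $(\ddc w_B)^n=\mathbb{1}_B\mu$, and with $\mu(\partial B)=0$ one gets $(\ddc\overline u_B)^n\geq\mathbb{1}_U\mu$ globally. Only \emph{after} this does one treat general plurifine open $U$ by taking the decreasing open sets $U_j\supset U$ from Lemma~\ref{open subsets decreasing to plurifine open set}, applying the open case to each $U_j$, and passing to the limit via Lemmas~\ref{lem: NP of limsup} and~\ref{thm NPddcu leq u>-infty mu}; the identification of the limit with $u$ again uses Lemma~\ref{lem upper reg}, Josefson's theorem, and Lemma~\ref{NP u+v > NP u} to absorb the pluripolar discrepancy $\bigcap_j U_j\setminus U$. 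Two smaller omissions: you never verify the defining family is nonempty (the paper produces a subsolution on a larger ball via Cegrell's solvability theorem, which also gives $u\in\mathcal{D}(\Omega)$), and the truncation $u_k=\max\{u,-k\}$ before forming the maximal majorant is unnecessary once $B\Subset U$ is chosen, since the reduction $u_B$ already serves as the boundary datum in Lemma~\ref{lemAhag}.
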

		\begin{proof}
			We first show that $u$ is well-defined, i.e., the family
			$$S:= \{ w\in \pshn \colon w \leq H \text{ on } \Omega \setminus U, \NP{w} \geq \mathbb{1}_U \mu\}$$
			 is non-empty. 
			 
			 Let $D$ be an open ball containing $\overline{\Omega}$. By \cite[Theorem 5.14]{Ceg04}, there exists $\varphi\in\F(D)$ such that
		  $(\ddc \varphi)^n = \mathbb{1}_U\mu.$ Put $M=-\inf_{\Om}H$. We have $\varphi|_{\Om}-M\in S$. Hence,
		 $u$ is well-defined. Moreover, since $\varphi\in\F(D)$ and  $\varphi|_{\Om}-M\leq u$, we have $u\in\mathcal{D}(\Om)$.

			 It remains to show that $\mathbb{1}_U (dd^c u)^n = \mathbb{1}_U \mu$. We first consider the case where $U$ is open in
			 the usual topology. In this case, we only need to show that $\mathbb{1}_{B} (\ddc u)^n = \mathbb{1}_{{B}} \mu$ for any open ball ${B} \Subset U$.
			
			 Since $\mu(\Om)<\infty$, without loss of generality, we can assume that $\mu (\partial B)=0$. Set
			\[
			u_B = \Big(\sup \{w \in \pshn \colon w \leq u \text{ on } \Omega \setminus B \} \Big)^*.
			\]
			Then, $u_B$ is maximal on $B$. In particular, $\mathbb{1}_B(\ddc u_B)^n = 0 \leq \mathbb{1}_B\mu.$ By  
			Lemma \ref{lemAhag}, there exists  $w_B \in \mathcal{N}(B,u_B)$ such that 
			\[(\ddc w_B)^n = \mathbb{1}_B \mu.\]
			Here, the notation $w_B \in \mathcal{N}(B,u_B)$ means that there exists a function $\psi \in \mathcal{N}(B)$
			( $\mathcal{N}(B)$ is the set of functions belonging in $\mathcal{D}(B)$ with smallest maximal plurisubharmonic majorant identically zero) such that 
			\[u_B + \psi \leq w_B \leq u_B\quad\mbox{on } B.\]
			
			By Lemma \ref{cor envelop}, we have $(dd^cu)^n\geq \mathbb{1}_U\mu$. Then, it follows from the comparison principle 
			\cite[Theorem 3.1]{ACCP} (see also \cite[Corollary 3.2]{ACCP}) that $u|_B\leq w_B$. On the other hand, for every $z_0\in\partial B$, we have
			$$\limsup\limits_{B\ni z\to z_0}w_B(z)\leq \limsup\limits_{B\ni z\to z_0}u_B(z)
			=\limsup\limits_{U\setminus B\ni z\to z_0}u_B(z)=\limsup\limits_{U\setminus B\ni z\to z_0}u(z)
				=u(z_0).$$
			Hence, the  function
			\[
			\overline{u}_B := \begin{cases}
			u \text{ on } \Omega \setminus B,\\
			w_B \text{ on } B,
			\end{cases}
			\]
			is plurisubharmonic on $\Om$. Moreover,
			$$(dd^c \overline{u}_B)^n\geq \mathbb{1}_B (dd^c w_B)^n+\mathbb{1}_{\Om\setminus\bar{B}} (dd^c u)^n\geq 
			 \mathbb{1}_{U\setminus\partial B}\mu= \mathbb{1}_{U}\mu.$$
			Hence, $\overline{u}_B\in S$. Consequently, $\overline{u}_B \leq u$ on $\Om$. Recall that $w_B \geq u$ on $B$, thus $\overline{u}_B \geq u$ on $\Om$. Then $\overline{u}_B = u$ on $\Om$ and it follows that
			$$(dd^c u)^n|_B=(dd^c \overline{u}_B)^n|_B=(dd^c w_B)^n=\mathbb{1}_{B}\mu.$$
			
			Now, we consider the general case where $U$ is plurifine open. 	By Lemma \ref{open subsets decreasing to plurifine open set}, there exists a decreasing sequence of open subset $U_j$ of $\Om$ such that $U\subset  \bigcap\limits_{j \geq 1} U_j$ and
			$\bigcap\limits_{j \geq 1} U_j\setminus U$ is  pluripolar. For every $j$, we denote
				$$S_j= \{ w\in \pshn \colon w \leq H \text{ on } \Omega \setminus U_j, \NP{w} \geq \mathbb{1}_U \mu\},$$
			and
			$$u_j=\left(\sup\{w: w\in S_j\}\right)^*.$$
			By using the case where $U$ is open, we have $u_j\in\mathcal{D}(\Om)$ and
			\begin{equation}\label{eq1thefineopen}
			\mathbb{1}_{U_j} (dd^c u_j)^n = \mathbb{1}_U \mu.
			\end{equation}
		Since $U\subset U_{j+1}\subset U_j$ for every $j$, we have $u_j$ is a decreasing sequence and $u_j\geq u$ for every $j$. Hence
			\begin{equation}\label{eq2thefineopen}
			\bar{u}:=\lim_{j\to\infty}u_j\geq u.
			\end{equation}
			Moreover, using \eqref{eq1thefineopen} and applying Lemmas \ref{lem: NP of limsup} and \ref{thm NPddcu leq u>-infty mu}
			(replace $\mu$ by $\mathbb{1}_U\mu$), we get
			\begin{equation}\label{eq3thefineopen}
			\mathbb{1}_{U} (dd^c \bar{u})^n = \mathbb{1}_U \mu.
			\end{equation}
			It remains to show that $u=\bar{u}$. By Lemma \ref{lem upper reg}, for every $j$, there exists a pluripolar set $N_j$ such that
			$u_j\leq H$ on $\Om\setminus (U_j\cup N_j)$. Denote $N=\cup_{j=1}^{\infty}N_j$. We have $N$ is pluripolar and 
			$\bar{u}\leq H$ on $\Om\setminus (U\cup N)$. By Josefson's theorem, there exists a negative plurisubharmonic function $\psi$
			 on $\Om$ such that $N\subset\{\psi=-\infty\}$. Then, we have
			 $\bar{u}+\epsilon\psi\leq H$ on $\Om\setminus U$ and, by Lemma \ref{NP u+v > NP u}, 
			 $$\NP{(\bar{u}+\epsilon\psi)}\geq (dd^c\bar{u})^n\geq \mathbb{1}_U\mu.$$
			 Hence $\bar{u}+\epsilon\psi\in S$ for every $\epsilon>0$. As a consequence, we have
			 \begin{equation}\label{eq4thefineopen}
			  u\geq\left(\limsup_{\epsilon\to 0}(\bar{u}+\epsilon\psi)\right)^*=\bar{u}
			 \end{equation}
			Combining \eqref{eq2thefineopen} and \eqref{eq4thefineopen}, we get $\bar{u}=u$. Therefore, by \eqref{eq3thefineopen},
			we obtain $\mathbb{1}_{U} (dd^c u)^n = \mathbb{1}_U \mu$.
			
			The proof is completed.
		\end{proof}

	\begin{proof}[End of the proof of Theorem \ref{the NPMA of envelope}]
	For every $j, k\in\Z^+$, we denote
	$$U_j=\{z\in U: d(z, \partial\Omega)>2^{-j}, \varphi (z)>-2^j \},$$
	and
	$$H_k=\max\{H, -k\}.$$
	We also define
	$$u_{j, k}=\left(\sup\{w\in\pshn: w\leq H_k \mbox{ on } \Om\setminus U, \NP{w}\geq \mathbb{1}_{U_j}\mu \}\right)^*,$$
	and
		$$u_{j}=\left(\sup\{w\in\pshn: w\leq H \mbox{ on } \Om\setminus U, \NP{w}\geq \mathbb{1}_{U_j}\mu \}\right)^*.$$
	It is clear that the sequence $\{u_{j, k}\}_{k\in\Z^+}$ is decreasing for every $j$ and 
	\begin{equation}\label{eq1theNPMAenvelope}
	u_{j, k}\geq u_j,
	\end{equation}
 for every $j, k$. The assumption $\NP{\phi} \geq \mathbb{1}_{U} \mu$ implies that $\int_{U_j}d\mu<\infty$. It follows from Theorem \ref{the: NPMA eq on plurifine open} that
 $$\mathbb{1}_U\NP{u_{j, k}}=\mathbb{1}_{U_j}\mu,$$
for every $j, k$. Letting $k\to\infty$ and using Lemmas \ref{lem: NP of limsup} and \ref{thm NPddcu leq u>-infty mu}, we get
\begin{equation}\label{eq2theNPMAenvelope}
\mathbb{1}_U\NP{\bar{u}_j}=\mathbb{1}_{U_j}\mu,
\end{equation}
where $\bar{u}_j=\lim_{k\to\infty}u_{j, k}$. 

Now we will prove $\bar{u}_j=u_j$. By Lemma \ref{lem upper reg}, for every $k$, there exists a pluripolar set $N_{j, k}$ such that
$u_{j, k}\leq H_k$ on $\Om\setminus (U\cup N_{j, k})$. Denote $N_j=\cup_{k=1}^{\infty}N_{j, k}$. We have $N_j$ is pluripolar and 
$\bar{u}_j\leq H$ on $\Om\setminus (U\cup N_j)$. By Josefson's theorem, there exists a negative plurisubharmonic function $\psi_j$
on $\Om$ such that $N_j\subset\{\psi_j=-\infty\}$. Then, we have
$\bar{u}+\epsilon\psi_j\leq H$ on $\Om\setminus U$ and, by Lemma \ref{NP u+v > NP u}, 
$$\NP{(\bar{u}_j+\epsilon\psi_j)}\geq \NP{\bar{u}_j}\geq \mathbb{1}_{U_j}\mu.$$
By the definition of $u_j$, we have $\bar{u}_j+\epsilon\psi_j\leq u_j$ for every $\epsilon>0$. Hence
\begin{equation}\label{eq3theNPMAenvelope}
\bar{u}_j=\left(\lim_{\epsilon\to 0}(\bar{u}_j+\epsilon\psi_j)\right)^*\leq u_j.
\end{equation}
Combining \eqref{eq1theNPMAenvelope} and \eqref{eq3theNPMAenvelope}, we get $\bar{u}_j=u_j$. Then, by \eqref{eq2theNPMAenvelope}, we have
$$\mathbb{1}_U\NP{u_j}=\mathbb{1}_{U_j}\mu.$$
 Letting $j\to\infty$ and using Lemmas \ref{lem: NP of limsup} and \ref{thm NPddcu leq u>-infty mu}, we get
 $\mathbb{1}_{U_{j_0}}\NP{\bar{u}}=\mathbb{1}_{U_{j_0}}\mu$
 for every $j_0\in\Z^+$, where $\bar{u}=\lim_{j\to\infty}u_j$. By the same argument as above, we also have $\bar{u}=u$.  Hence
 \begin{equation}\label{eq4theNPMAenvelope}
 \mathbb{1}_{U_{j_0}}\NP{u}=\mathbb{1}_{U_{j_0}}\mu,
 \end{equation}
 for every $j_0\in\Z^+$.
 Observe that $\cup_{j_0=1}^{\infty}U_{j_0}=U\setminus\{\phi=-\infty\}$ and $\mu(\{\phi=-\infty\})=0$.  Hence, by using \eqref{eq4theNPMAenvelope} and letting $j_0\rightarrow\infty$, we have
 $$\mathbb{1}_{U}\NP{u}=\mathbb{1}_{U}\mu.$$
 	This finishes the proof.
		\end{proof}
	The following result is a corollary of Theorem \ref{the NPMA of envelope} and Lemma \ref{thm NPddcu leq u>-infty mu}:
	\begin{proposition}\label{propmodel}
		Let $u\in\pshn$. Then there exists $\bar{u}\in\pshn$ such that $u\leq\bar{u}\leq P[u]$ and $\NP{\bar{u}}=0$.
		In particular, if $u$ is model then $\NP{u}=0$.
	\end{proposition}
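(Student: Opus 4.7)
The plan is to construct $\bar u$ as a monotone limit of plurifine envelopes, applying Theorem \ref{the NPMA of envelope} at each finite stage and passing to the limit via Lemma \ref{thm NPddcu leq u>-infty mu}. For each $n \in \mathbb{Z}^+$ set $V_n := \{u > -n\}$, which is plurifine open since $u$ is plurifine continuous, and define
\[
\bar u_n := \Big(\sup\{w \in \pshn : w \leq P[u] \text{ on } \Omega \setminus V_n\}\Big)^*.
\]
Applying Theorem \ref{the NPMA of envelope} with $\mu = 0$, $U = V_n$, $H = P[u]$ and $\varphi = P[u]$ (so the hypothesis $\NP{\varphi} \geq \mathbb{1}_{V_n}\mu = 0$ is automatic) yields $\mathbb{1}_{V_n}\NP{\bar u_n} = 0$. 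Since $\Omega \setminus V_n$ shrinks as $n$ grows, the family expands and $\{\bar u_n\}_n$ is increasing.

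Set $\bar u := (\lim_n \bar u_n)^* \in \pshn$. For each fixed $n_0$, the inclusion $V_{n_0} \subset V_n$ for $n \geq n_0$ gives $\mathbb{1}_{V_{n_0}}\NP{\bar u_n} = 0$ for all such $n$. Lemma \ref{thm NPddcu leq u>-infty mu} applied to the increasing sequence $\{\bar u_n\}_{n \geq n_0}$ with $\mu = 0$ and $U = V_{n_0}$ then gives $\mathbb{1}_{V_{n_0}}\NP{\bar u} = 0$. Letting $n_0 \to \infty$, the sets $V_{n_0}$ increase to $\{u > -\infty\}$, and since $\NP{\bar u}$ vanishes on the pluripolar set $\{u = -\infty\}$ (Remark \ref{remk NPMA}), we conclude $\NP{\bar u} = 0$. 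The inequality $u \leq \bar u$ follows because $u \leq P[u]$ means $u$ itself belongs to each family.

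The hard step I expect to be the main obstacle is the upper bound $\bar u \leq P[u]$: the construction only constrains $w$ on $\Omega \setminus V_n$, so a priori $\bar u_n$ could exceed $P[u]$ on $V_n$ (it already satisfies $\bar u_n \geq P[u]$ since $P[u]$ lies in each family). The natural remedy is to refine the family by also imposing the global bound $w \leq P[u]$; the key point is then to verify that Theorem \ref{the NPMA of envelope}'s conclusion $\mathbb{1}_{V_n}\NP{\bar u_n} = 0$ survives this additional restriction, in the spirit of the proof of the main Theorem \ref{main}. Once $\bar u \leq P[u]$ is available (by Lemma \ref{lem upper reg}), the ``in particular'' consequence is immediate: if $u$ is model then $u = P[u]$, forcing $\bar u = u$ and therefore $\NP{u} = 0$.
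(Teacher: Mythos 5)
Your overall architecture (plurifine envelopes on increasing sets, Theorem \ref{the NPMA of envelope} with $\mu=0$ at each stage, Lemma \ref{thm NPddcu leq u>-infty mu} to pass to the limit, $\NP{\bar u}$ vanishing on the pluripolar set $\{u=-\infty\}$) is exactly the paper's, and the "in particular" clause is handled correctly. But there is a genuine gap in the upper bound $\bar u \leq P[u]$, which you correctly flag as the hard step but do not close, and the remedy you sketch does not work.

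Two specific problems. First, your $V_n = \{u > -n\}$ is not bounded away from $\partial\Omega$: if, say, $u$ is bounded near the boundary, $V_n$ eventually contains a whole neighbourhood of $\partial\Omega$, and then the constraint $w\leq P[u]$ on $\Omega\setminus V_n$ gives no control near $\partial\Omega$ at all, so $\bar u_n$ can be much larger than $P[u]$. Second, and more fundamentally, you constrain against $H = P[u]$ rather than $H = u$. Even with the compactness fix, a function $w\leq P[u]$ on $\Omega\setminus V_n$ only lands in the defining family for $P[P[u]]$, so the needed bound $\bar u_n\leq P[u]$ amounts to $P[P[u]]\leq P[u]$, i.e., that $P[u]$ is model — which is Theorem \ref{the: model}(i), proved downstream of this very proposition. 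Your suggested "refinement" of imposing the global bound $w\leq P[u]$ collapses the family to $\{w\in\pshn: w\leq P[u]\}$ whose envelope is just $P[u]$, so $\NP{\bar u_n}=0$ would reduce to $\NP{P[u]}=0$, again circular; it also steps outside the exact family treated by Theorem \ref{the NPMA of envelope}, so the conclusion does not transfer for free.

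The paper's construction dissolves both problems: it takes $V_j=\{z\in\Omega: d(z,\partial\Omega)>2^{-j},\ u(z)>-2^j\}$ (so $V_j\Subset\Omega$) and, crucially, constrains against $u$ itself, $u_j=\big(\sup\{v\in\pshn: v\leq u\text{ on }\Omega\setminus V_j\}\big)^*$. Then any competitor $v$ satisfies $v\leq u+2^j$ globally (use $u\geq -2^j$ on $\overline{V_j}$ and $v\leq 0$) and $\liminf_{z\to\partial\Omega}(u-v)\geq 0$ (since $v\leq u$ near $\partial\Omega\subset\Omega\setminus V_j$), so $v$ lies in the defining family of $P[u]$ and hence $v\leq P[u]$ directly from the definition of $P[\cdot]$, with no appeal to $P[u]$ being model. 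Replacing $V_n$ and $H$ in your argument by the paper's $V_j$ and $u$ repairs the proof; as written, it does not go through.
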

	\begin{proof} For every $j\in\Z^+$, we denote
			$$V_j=\{z\in\Omega:\ d(z,\partial\Omega)>2^{-j},\ u(z)>-2^j\},$$
		and
			$$u_j=\left(\sup\left\{v\in\psh:\ v\leq u \text{ on }\Omega\setminus V_j\right\}\right)^*.$$
	By using Theorem \ref{the NPMA of envelope}, we have $\mathbb{1}_{V_j}\NP{u_j}=0$ for every $j$. Then, it follows from 
	Lemma \ref{thm NPddcu leq u>-infty mu} that $\mathbb{1}_{V_j}\NP{\bar{u}}=0$ for every $j$, where $\bar{u}=(\lim_{j\to\infty}u_j)^*$.
	Since $\cup_{j=1}^{\infty}V_j=\Om\setminus\{u=-\infty\}$ and $\NP{\bar{u}}$ vanishes on pluripolar sets, it follows that 
	$$\NP{\bar{u}}=0.$$
	Moreover, since $u\leq u_j\leq P[u]$ for every $j$, we also have $u\leq\bar{u}\leq P[u]$.
	
	The proof is completed.
	\end{proof}
	\section{Xing-type comparison principles}\label{sec: comparison}
	In \cite{Xin}, Xing provided a strong comparison principle for bounded plurisubharmonic functions. Xing's theorem then has been generalized	by Nguyen-Pham \cite{NP09} and by \AA hag-Cegrell-Czy\.z-Pham \cite{ACCP}. In this section, we introduce two new  Xing-type theorems
	(Theorems \ref{lem: compa 1} and  \ref{strong CP}) and some applications.
		\begin{theorem}\label{lem: compa 1}
			Let $u,v\in\pshn$ such that
			\begin{itemize}
				\item[i,]$\liminf\limits_{\Omega\setminus N\ni z\rightarrow\xi_0}(u(z)-v(z))\geq0$ for every $\xi_0\in\partial\Omega$,
				where $N=\{v=-\infty\}$;
				\item[ii,] $v\leq u+O(1)$ on $\Omega$.
			\end{itemize}
			Let  $w_j\in\text{PSH}(\Omega, [-1, 0]),$ $j=1,...,n,$ and denote $T=dd^cw_1\wedge...\wedge dd^c w_n$. Then
			\begin{equation}\label{eq01lemcompa}
			\frac{1}{n!}\int\limits_{\{u<v\}}(v-u)^nT +\int\limits_{\{u<v\}}(-w_1)\NP{v}	\leq
			\int\limits_{\{u<v\}}(-w_1)\NP{u}.
			\end{equation}
				Moreover, if $\NP{u} \leq \NP{v} + \mu$ for some positive Borel measure $\mu$  then
				\begin{equation}\label{eq02lemcompa}
				\frac{1}{n!}\int\limits_{\{u<v\}}(v-u)^n T \leq \int\limits_{\{u<v\}}(-w_1)d\mu.
				\end{equation}
		\end{theorem}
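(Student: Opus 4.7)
I prove~(\ref{eq01lemcompa}) by truncation and passage to the limit, then deduce~(\ref{eq02lemcompa}) directly. For $k\in\Z^+$ set $u_k=\max\{u,-k\}$ and $v_k=\max\{v,-k\}$, which are bounded plurisubharmonic functions on $\Om$. The first task is to check that $\liminf_{\Om\ni z\to\xi_0}(u_k-v_k)(z)\geq 0$ for every $\xi_0\in\partial\Om$, now \emph{without} any exceptional set: on $\Om\setminus N$ use~(i) together with a case split on whether $v$ stays above $-k$ or dips below $-k$ near $\xi_0$ (with~(ii) preventing $u_k-v_k$ from drifting away from $u-v$ in the latter case), while on $N$ we have $v_k\equiv -k\leq u_k$. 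Applying the classical Xing-type inequality for bounded plurisubharmonic functions (e.g., \cite{Xin,NP09}) to the pair $u_k,v_k$ yields
\[\tfrac{1}{n!}\int_{\{u_k<v_k\}}(v_k-u_k)^nT+\int_{\{u_k<v_k\}}(-w_1)(\ddc v_k)^n\leq\int_{\{u_k<v_k\}}(-w_1)(\ddc u_k)^n.\]
A direct computation gives $\{u_k<v_k\}=\{u<v\}\cap\{v>-k\}$, which increases to $\{u<v\}$ as $k\to\infty$ since $v>-\infty$ on $\{u<v\}$ (and, by~(ii), $u>-\infty$ there as well).

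Passage to the limit uses Remark~\ref{remk NPMA}(i) and the stability lemmas of Section~\ref{secsta}. The first LHS term tends to $\tfrac{1}{n!}\int_{\{u<v\}}(v-u)^nT$ by monotone convergence, since $(v_k-u_k)^n\mathbb{1}_{\{u_k<v_k\}}$ is eventually monotone increasing in $k$ with pointwise limit $(v-u)^n\mathbb{1}_{\{u<v\}}$. For the second LHS term, Remark~\ref{remk NPMA}(i) identifies $\mathbb{1}_{\{v>-k\}}(\ddc v_k)^n$ with $\mathbb{1}_{\{v>-k\}}\NP{v}$, and since $\NP{v}$ does not charge the pluripolar set $\{v=-\infty\}$ the integral converges to $\int_{\{u<v\}}(-w_1)\NP{v}$. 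For the RHS, split the integration set into $\{u>-k\}$ and $\{u\leq-k\}$: on the first piece $(\ddc u_k)^n$ agrees with $\NP{u}$ by Remark~\ref{remk NPMA}(i) and convergence is again by monotone convergence, while on the second piece the bounded function $u_k$ is constantly $-k$ on the plurifine open set $\{u<-k\}$, so $(\ddc u_k)^n$ vanishes there and the remaining mass sits on $\{u=-k\}$, along which~(ii) confines $v$ to the thin shell $(-k,-k+C]$.

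The central technical obstacle is to show that the residual $\int_{\{u=-k\}\cap\{v>-k\}}(-w_1)(\ddc u_k)^n$ has $\liminf$ zero as $k\to\infty$; this will not follow from a naive truncation bound, since the bounded measures $(\ddc u_k)^n$ can carry non-trivial mass on the level set $\{u=-k\}$. The plan is to combine the Bedford--Taylor comparison principle for the bounded pair $u_k,v_k$ with the uniform estimate $v_k-u_k\leq C$ on the shell and the pluripolarity of $\bigcap_k\{u\leq-k\}=\{u=-\infty\}$ (together with Lemma~\ref{thm NPddcu leq u>-infty mu}), so that the residual mass is squeezed between quantities tending to zero. Once~(\ref{eq01lemcompa}) is established, the second inequality~(\ref{eq02lemcompa}) is immediate: the assumption $\NP{u}\leq\NP{v}+\mu$ gives $\int_{\{u<v\}}(-w_1)\NP{u}\leq\int_{\{u<v\}}(-w_1)\NP{v}+\int_{\{u<v\}}(-w_1)d\mu$, and substituting into~(\ref{eq01lemcompa}) and cancelling the common $\NP{v}$ integral yields~(\ref{eq02lemcompa}).
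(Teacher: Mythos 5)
Your plan follows the same high-level strategy as the paper's proof---truncate $u$ and $v$ at level $-k$, apply a Xing-type inequality for bounded functions, and pass to the limit---and you have honestly flagged the point where this direct approach breaks: the truncated measure $(dd^c u_k)^n$ can place persistent mass on the level set $\{u=-k\}$, and nothing in the hypotheses forces the residual $\int_{\{u=-k\}\cap\{v>-k\}}(-w_1)(dd^c u_k)^n$ to tend to zero. The ingredients you propose to ``squeeze'' it (the bounded comparison principle, the bound $v_k-u_k\le C$ on the shell, and the pluripolarity of $\{u=-\infty\}$) do not control the level-set mass itself; as a warning, for $u=c\log|z|$ in the ball, $(dd^c u_k)^n$ is a sphere measure of constant total mass $c^n$ for every $k$. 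What makes the argument work in the paper is a perturbation you have not used: one compares $u_M=\max\{u,-M\}$ not with $v_M$ but with $(1+\epsilon)v_M-\epsilon$. Then $E_M:=\{u_M<(1+\epsilon)v_M-\epsilon\}$ is relatively compact in $\Omega$ by hypothesis~(i), and any $z\in E_M$ satisfies $v(z)>\frac{-M+\epsilon}{1+\epsilon}$, so hypothesis~(ii) (with $v\le u+K$) forces $u>-M$ on $E_M$ once $M\ge\frac{(1+\epsilon)K}{\epsilon}$. Hence $E_M\subset\{u>-M\}\cap\{v>-M\}$, the truncated Monge--Amp\`ere measures agree with the non-pluripolar ones on $E_M$, and there is no residual at all; one lets $M\to\infty$ by monotone convergence and finally $\epsilon\searrow0$. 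Without this $\epsilon$-trick your ``plan'' for the residual remains a genuine gap.

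A second issue: your deduction of~(\ref{eq02lemcompa}) from~(\ref{eq01lemcompa}) by substituting $\NP{u}\le\NP{v}+\mu$ and ``cancelling the common $\NP{v}$ integral'' is invalid when $\int_{\{u<v\}}(-w_1)\NP{v}=+\infty$, a case the hypotheses do not exclude (the non-pluripolar measure need not be locally finite). In that case~(\ref{eq01lemcompa}) reads $+\infty\le+\infty$ and gives no information, while~(\ref{eq02lemcompa}) remains a non-trivial finite-measure statement. The paper therefore proves~(\ref{eq02lemcompa}) by re-running the truncation argument and using $(\NP{u}-\NP{(1+\epsilon)v})|_{E_M}\le\mu|_{E_M}$ directly inside the Xing inequality, so that no subtraction of possibly infinite quantities is ever performed.
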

		\begin{proof} For each $M>0$, we denote
			$$u_M=\max\{u,-M\}\quad\mbox{and}\quad v_M=\max\{v,-M\}.$$ 
			By the assumption $(i)$, we have
			$$\liminf\limits_{\Omega\setminus N\ni z\rightarrow\xi_0}(u_M(z)-(1+\epsilon)v_M(z))
		\geq	\liminf\limits_{\Omega\setminus N\ni z\rightarrow\xi_0}(u_M(z)-v_M(z))\geq 0,$$ 
			for every $\xi_0\in\partial\Omega$ and $M\in\Z^+$.
			Hence, by using \cite[Theorem 4.9]{NP09} (observe that, in this theorem, the condition {\it $\Om$ is hyperconvex}
			 is not	necessary), we have
			 \begin{equation}\label{eq1lemcompa}
			 	\frac{1}{n!}\int\limits_{E_M}((1+\epsilon)v_{M}-u_M-\epsilon)^nT
			 	\leq\int\limits_{E_M}(-w_1)\left((dd^c u_M)^n-(dd^c ((1+\epsilon)v_{M}))^n\right),
			 \end{equation}
			 where $E_M=\{u_M<(1+\epsilon)v_{M}-\epsilon\}\Subset\Om$.
			 
			 Note that if $z\in E_M$ then $v(z)>-\frac{M}{1+\epsilon}$ and $v(z)> (1+\epsilon)v(z)>u_M(z)\geq u(z)$.
			 Moreover, by the assumption $(ii)$, there exists $K>1$ such that $v\leq u+K$. Hence, we have
			 $$E_M\subset  \left\{v>-\frac{M}{1+\epsilon}\right\}\cap \{u<v\} \subset \{u>-M\}\cap \{u<v\},$$
			 for every $M\geq \frac{(1+\epsilon)K}{\epsilon}$.
			  In particular,
			  $$\mathbb{1}_{E_M}(dd^c u_{M})^n=\mathbb{1}_{E_M}\NP{u}\leq \mathbb{1}_{\{u<v\}}\NP{u},$$
			  for every $M\geq \frac{(1+\epsilon)K}{\epsilon}$.
			  Hence, 
			 \begin{equation}\label{eq2lemcompa}
			 \int\limits_{E_M}(-w_1)(dd^c u_{M})^n\leq 
			  \int\limits_{\{u<v\}}(-w_1)\NP{u},
			 \end{equation}
			 for $M\gg 1$.
			 
			By the fact $E_M\subset\{v>-M\}$, we also have
			\begin{equation}\label{eq3lemcompa}
			\int\limits_{E_M}(-w_1)(dd^c ((1+\epsilon)v_{M}))^n
			= \int\limits_{E_M}(-w_1)\NP{(1+\epsilon)v}\geq \int\limits_{E_M}(-w_1)\NP{v}.
			\end{equation}
			Combining \eqref{eq1lemcompa}, \eqref{eq2lemcompa} and \eqref{eq3lemcompa}, we get
			
$$\frac{1}{n!}\int\limits_{E_M}((1+\epsilon)v_{M}-u_M-\epsilon)^nT+\int\limits_{E_M}(-w_1)\NP{v}\leq\int\limits_{\{u<v\}}(-w_1)\NP{u},$$
for every $M\gg 1$.
		
			Letting $M\rightarrow\infty$ and using the monotone convergence theorem (observer that 
			$\{\mathbb{1}_{E_M}((1+\epsilon)v_{M}-u_M-\epsilon)^n\}_{M\in\Z^+}$ is an increasing sequence), we have
			$$\frac{1}{n!}\int\limits_{\{u<(1+\epsilon)v-\epsilon\}}((1+\epsilon)v-u-\epsilon)^nT
			+\int\limits_{\{u<(1+\epsilon)v-\epsilon\}}(-w_1)\NP{v}
			\leq \int\limits_{\{u<v\}}(-w_1)\NP{u}.$$
			Letting $\epsilon\searrow 0$, we obtain the inequality \eqref{eq01lemcompa}.
			
			It remains to prove \eqref{eq02lemcompa}. For every $M\geq \frac{(1+\epsilon)K}{\epsilon}$, by the fact $E_M\subset\{u>-M\}\cap\{v>-M\}$, we have
			$$\left((dd^c u_M)^n-(dd^c ((1+\epsilon)v_{M}))^n\right)|_{E_M}=(\NP{u}-\NP{((1+\epsilon)v)})|_{E_M}\leq\mu|_{E_M}.$$
			Hence, it follows from \eqref{eq1lemcompa} that
			$$	\frac{1}{n!}\int\limits_{E_M}((1+\epsilon)v_{M}-u_M-\epsilon)^nT
			\leq\int\limits_{E_M}(-w_1) d\mu,$$
			for every $M\gg 1$. Letting $M\rightarrow\infty$, we get
				$$	\frac{1}{n!}\int\limits_{\{u<(1+\epsilon)v-\epsilon\}}((1+\epsilon)v-u-\epsilon)^nT
				\leq\int\limits_{\{u<(1+\epsilon)v-\epsilon\}}(-w_1) d\mu.$$
				Letting $\epsilon\searrow 0$, we obtain \eqref{eq02lemcompa}. This finishes the proof.
		\end{proof}
		 \begin{corollary}\label{cor: comparison 1}
		 		Let $u,v\in\pshn$ such that
		 		\begin{itemize}
		 			\item[i,]$\liminf\limits_{\Omega\setminus N\ni z\rightarrow\xi_0}(u(z)-v(z))\geq0$ for every $\xi_0\in\partial\Omega$,
		 			where $N=\{v=-\infty\}$;
		 			\item[ii,] $v\leq u+O(1)$ on $\Omega$.
		 		\end{itemize}
		 		 If $\NP{u} \leq \NP{v}$ then $u \geq v$. 
		 \end{corollary}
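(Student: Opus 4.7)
The plan is to apply inequality \eqref{eq02lemcompa} of Theorem \ref{lem: compa 1} with the trivial choice $\mu\equiv 0$, then specialize the test current $T$ so that $(v-u)^n T$ becomes a positive multiple of the standard volume form, and finally upgrade the resulting almost-everywhere inequality to a pointwise one via the submean-value property of plurisubharmonic functions.

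First, observe that hypotheses (i) and (ii) of the corollary coincide exactly with those of Theorem \ref{lem: compa 1}, and the assumption $\NP{u}\le\NP{v}$ is just $\NP{u}\le\NP{v}+\mu$ with $\mu\equiv 0$, which is a valid positive Borel measure. Hence \eqref{eq02lemcompa} yields
\[
\frac{1}{n!}\int\limits_{\{u<v\}}(v-u)^n T \le 0
\]
for every $T=\ddc w_1\wedge\cdots\wedge\ddc w_n$ with $w_j\in\text{PSH}(\Omega,[-1,0])$.

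Next, since $\Omega$ is a bounded domain in $\C^n$, I pick $R>0$ with $\Omega\subset B(0,R)$ and set $w_j(z):=(|z|^2-R^2)/R^2$ for $j=1,\dots,n$; then each $w_j$ is a smooth plurisubharmonic function on $\Om$ with values in $[-1,0]$, and $T$ becomes a strictly positive constant multiple of Lebesgue measure $dV$. Moreover, hypothesis (ii) forces $\{u=-\infty\}\subset\{v=-\infty\}$, so on $\{u<v\}$ one has $0<v-u\le C$ for a constant $C$, which makes $(v-u)^n T$ a genuine finite measure there. The displayed inequality thus becomes $\int_{\{u<v\}}(v-u)^n\,dV\le 0$; but the integrand is strictly positive on $\{u<v\}$, so this set has Lebesgue measure zero, i.e., $u\ge v$ almost everywhere on $\Om$.

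It remains to promote the a.e. inequality to a pointwise one, which is the only step where a conceptual (rather than bookkeeping) argument is required. Using that for a plurisubharmonic function $f$ the ball averages $|B(z_0,r)|^{-1}\int_{B(z_0,r)}f\,dV$ decrease to $f(z_0)$ as $r\searrow 0$, and that $u\ge v$ a.e. gives the same inequality for these averages, one obtains $u(z_0)\ge v(z_0)$ for every $z_0\in\Om$. No further obstacle appears.
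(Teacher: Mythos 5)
Your proof is correct and follows essentially the same route as the paper's: apply the second inequality of Theorem \ref{lem: compa 1} with $\mu\equiv 0$, conclude the set $\{u<v\}$ is Lebesgue-null, and upgrade to a pointwise inequality using that plurisubharmonic functions are determined by their almost-everywhere values. You are merely more explicit than the paper in two places — specifying the concrete choice $w_j(z)=(|z|^2-R^2)/R^2$ so that $T$ is a positive multiple of $dV$ (the paper just quantifies over all $w\in\text{PSH}(\Omega,[-1,0])$ and leaves this reduction implicit), and spelling out the ball-average argument for the a.e.-to-everywhere step — but there is no substantive difference in the argument.
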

		 \begin{proof} By the last assertion of Theorem \ref{lem: compa 1}, we have
		 	$$\int\limits_{\{u<v\}}(v-u)^n(dd^cw)^n=0,$$
		 	for every $w\in\text{PSH}(\Omega, [-1, 0])$. It follows that $v\leq u$ a.e., and thus $v\leq u$ everywhere in $\Om$.
		 \end{proof}
			\begin{corollary}\label{cor NP=0 implies model}
				If $u\in\pshn$ and $\NP{u}=0$ then $u$ is model.
			\end{corollary}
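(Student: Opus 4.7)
The plan is to establish both inequalities $u\le P[u]$ and $P[u]\le u$. The first is immediate from the definition: taking $v=u$ itself as a test function in the supremum defining $P[u]$, we have $u\le u+O(1)$ and $\liminf(u-u)\ge 0$ trivially, so $u\le P[u]$.

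For the reverse inequality $P[u]\le u$, my plan is to show that \emph{every} competitor $v\in\pshn$ appearing in the supremum defining $P[u]$ satisfies $v\le u$. Such a competitor $v$ satisfies, by definition, the two hypotheses $v\le u+O(1)$ on $\Omega$ and $\liminf_{\Omega\setminus\{u=-\infty\}\ni z\to\xi_0}(u(z)-v(z))\ge 0$ for every $\xi_0\in\partial\Omega$. The plan is to apply Corollary~\ref{cor: comparison 1} to $u,v$: since by assumption $\NP{u}=0\le\NP{v}$, the only thing to check is that the boundary condition of Corollary~\ref{cor: comparison 1} is met (note that in the Corollary $N=\{v=-\infty\}$, whereas in the definition of $P[u]$ the exceptional set is $\{u=-\infty\}$).

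The minor technical point here (and the only mild obstacle) is to reconcile these two versions of $N$. From $v\le u+O(1)$ we get $\{u=-\infty\}\subset\{v=-\infty\}$, hence $\Omega\setminus\{v=-\infty\}\subset\Omega\setminus\{u=-\infty\}$. Since the liminf along a subset is at least as large as the liminf along a larger set, the liminf condition with $N=\{v=-\infty\}$ follows automatically from the one in the definition of $P[u]$. Corollary~\ref{cor: comparison 1} then yields $v\le u$ on $\Omega$.

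Taking the supremum over all such competitors $v$ shows that $\sup\{v:v\text{ admissible}\}\le u$ pointwise, and since $u$ is plurisubharmonic (hence upper semicontinuous), the upper semicontinuous regularization of this supremum still satisfies $P[u]\le u$. Combined with $u\le P[u]$, we obtain $u=P[u]$, i.e.\ $u$ is model. The whole argument is essentially a direct application of the Xing-type comparison principle just proved, so no further machinery is needed.
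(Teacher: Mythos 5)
Your proof is correct and follows essentially the same route as the paper's: both reduce the claim to Corollary~\ref{cor: comparison 1}, applied to $u$ against each competitor $v$ in the envelope defining $P[u]$, using $\NP{u}=0\le\NP{v}$, and then taking the upper-semicontinuous regularization. Your treatment of the exceptional set is the right point to flag and you handle it correctly: $v\le u+O(1)$ forces $\{u=-\infty\}\subset\{v=-\infty\}$, so passing from the liminf over $\Omega\setminus\{u=-\infty\}$ to the (a priori larger) liminf over the smaller set $\Omega\setminus\{v=-\infty\}$ is automatic. The only cosmetic difference is that the paper inserts a perturbation $v_\epsilon(z)=v(z)+\epsilon(\|z\|^2-M)$ before applying Corollary~\ref{cor: comparison 1} and then lets $\epsilon\to 0$; in this particular application that extra step is not needed, since the hypotheses of the corollary are already met by $v$ itself, as you observe. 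Your version is therefore slightly more economical while reaching the same conclusion.
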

			\begin{proof}
				Let $v\in\pshn$ such that $v\leq u+ O(1) \text{ on }\Omega$ and $ \liminf\limits_{\Omega\setminus N\ni z\rightarrow\xi_0}(u(z) - v(z))\geq0$ for every  $\xi_0\in\partial\Omega$, where $N=\{u=-\infty\}\subset\{v=-\infty\}$.
				 For $\epsilon>0,$ we denote $$v_\epsilon(z)=v(z)+\epsilon(\|z\|^2-M),$$ where $M=\sup\limits_{\overline{\Omega}}\|z\|^2.$ Then $v_\epsilon\in\pshn$, $v_\epsilon\leq u+ O(1) \text{ on }\Omega$ and $ \liminf\limits_{\Omega\setminus N\ni z\rightarrow\xi_0}(u(z) - v_\epsilon(z)\geq0$ for every  $\xi_0\in\partial\Omega$.
				It follows from Corollary \ref{cor: comparison 1} that 
			 $u\geq v_{\epsilon}$ for every $\epsilon>0$. Letting $\epsilon\rightarrow 0$, we obtain $u\geq v$. Taking the supremum over all such $v$ and taking the upper semi-continuous regularization yields $u\geq P[u]$ almost everywhere in $\Omega$, hence $u\geq P[u]$ everywhere in $\Omega$. It is clear that $u\leq P[u]$. Therefore $u=P[u]$, which means $u$ is model, as desired.
			\end{proof}
		\begin{theorem}\label{the: model}
			Suppose $u\in\pshn$. Then 
			\begin{itemize}
				\item [(i)] $P[u]$ is a model plurisubharmonic function;
				\item [(ii)] $u$ is model iff $\NP{u}=0$.
			\end{itemize}
		\end{theorem}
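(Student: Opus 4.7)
The plan is to establish (ii) first, since it comes essentially for free from results already in the paper, and then to derive (i) from (ii) via the comparison principle. The ``only if'' direction of (ii), that $u$ model implies $\NP{u}=0$, is precisely the second assertion of Proposition~\ref{propmodel}; the ``if'' direction, that $\NP{u}=0$ implies $u$ is model, is precisely Corollary~\ref{cor NP=0 implies model}. So (ii) requires no further argument.

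For (i), by (ii) it suffices to show $\NP{P[u]}=0$. I would first apply Proposition~\ref{propmodel} to produce $\bar{u}\in\pshn$ satisfying $u\leq\bar{u}\leq P[u]$ and $\NP{\bar{u}}=0$. The key claim is then that $\bar{u}=P[u]$; granting this, $\NP{P[u]}=\NP{\bar{u}}=0$, and applying (ii) yields that $P[u]$ is model.

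To prove $P[u]\leq\bar{u}$, I would take an arbitrary candidate $v\in\pshn$ in the family defining $P[u]$, so that $v\leq u+C$ for some $C>0$ and $\liminf_{\Omega\setminus\{u=-\infty\}\ni z\to\xi_0}(u(z)-v(z))\geq 0$ for every $\xi_0\in\partial\Omega$, and then apply Corollary~\ref{cor: comparison 1} to the pair $(\bar{u},v)$. The growth condition $v\leq\bar{u}+O(1)$ follows from $v\leq u+C\leq\bar{u}+C$ since $u\leq\bar{u}$. The boundary condition $\liminf_{\Omega\setminus\{v=-\infty\}\ni z\to\xi_0}(\bar{u}(z)-v(z))\geq 0$ holds because $v\leq u+C$ forces $\{u=-\infty\}\subseteq\{v=-\infty\}$, so the liminf is taken over a smaller set than in the original candidate condition, and $\bar{u}\geq u$ only improves the integrand. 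Finally, $\NP{\bar{u}}=0\leq\NP{v}$ is trivial, so Corollary~\ref{cor: comparison 1} delivers $v\leq\bar{u}$. Taking the supremum over $v$ and passing to the upper semi-continuous regularization gives $P[u]\leq\bar{u}$, completing the argument.

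The only delicate point I anticipate is the boundary bookkeeping: the definition of $P[u]$ is phrased with exceptional set $\{u=-\infty\}$, while Corollary~\ref{cor: comparison 1} is phrased with $\{v=-\infty\}$. The inclusion $\{u=-\infty\}\subseteq\{v=-\infty\}$ induced by $v\leq u+C$ is exactly what bridges the two conventions. Beyond that, the proof is a direct assembly of the tools provided in the preceding sections, and I do not expect further obstructions.
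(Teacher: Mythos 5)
Your proposal is correct and follows essentially the same route as the paper: both prove (ii) by citing Proposition~\ref{propmodel} and Corollary~\ref{cor NP=0 implies model}, and both prove (i) by taking the $\bar{u}$ from Proposition~\ref{propmodel} and showing $\bar{u}=P[u]$. The only cosmetic difference is that the paper observes $\bar{u}$ is model and then invokes monotonicity of $P$ (which it leaves unproved), whereas you obtain $P[u]\leq\bar{u}$ by applying Corollary~\ref{cor: comparison 1} directly to each candidate $v$ in the defining family, which is arguably the cleaner bookkeeping.
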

	\begin{proof}
		(ii) is an immediate corollary of Proposition \ref{propmodel} and Corollary \ref{cor NP=0 implies model}.
		It remains to prove (i).
		
		 By Proposition \ref{propmodel}, there exists $\bar{u}\in\pshn$ such that $u\leq\bar{u}\leq P[u]$ and
		$\NP{\bar{u}}=0$. Then, by Corollary \ref{cor NP=0 implies model}, we have $\bar{u}$ is model and it 
		follows that
		$$u\leq\bar{u}=P[\bar{u}]\leq P[u].$$
		Moreover, the condition $u\leq \bar{u}$ implies that $P[u]\leq P[\bar{u}]$. Hence
		$$\bar{u}=P[\bar{u}]=P[u].$$
		Thus, $P[u]$ is model.
	\end{proof}
			\begin{remark}\label{rmkF}
				\begin{itemize}
					\item [(i)]	If $u$ is a negative maximal plurisubharmonic function then it follows directly from the definitions that $u$ is model. However, the converse is not true. For example, $u=\log |z|$ is a model plurisubharmonic function which is not maximal on the unit ball.
					\item [(ii)] Let $u\in\pshn\cap \mathcal{D}(\Om)$ such that $(dd^cu)^n$ vanishes on pluripolar sets. If
					$v\geq u$ is a model plurisubharmonic function then it follows
					from \cite[Theorem 1.2]{Blo06} and \cite[Lemma 4.1]{ACCP}  that $v\in\mathcal{D}(\Om)$ and
					 $(dd^cv)^n$ vanishes on pluripolar sets. Hence, by Theorem \ref{the: model}, we have $(dd^cv)^n=0$, i.e.,
					 $v$ is maximal. Consequently, if $\Om$ is a hyperconvex domain then
					 $$\{w\in \mathcal{F}(\Om): (dd^c w)^n \mbox{ vanishes on pluripolar sets}\}\subset\NNP(\Om).$$
				\end{itemize}
			\end{remark}
			\begin{theorem}\label{strong CP}
				Let $u,v,H\in\pshn$ such that $u\in\NNP(H)$ and $v\leq H$. 
					Assume $w_j\in\text{PSH}(\Omega,[-1, 0]),$ $j=1,...,n,$ and denote $T=dd^cw_1\wedge...\wedge dd^c w_n$. Then
				$$\frac{1}{n!}\int\limits_{\{u<v\}}(v-u)^n T+\int\limits_{\{u<v\}}(-w_1)\NP{v}	\leq
				\int\limits_{\{u<v\}}(-w_1)\NP{u}.$$
					Moreover, if $\NP{u} \leq \NP{v} + \mu$ for some positive Borel measure $\mu$  then
					$$\frac{1}{n!}\int\limits_{\{u<v\}}(v-u)^n T \leq \int\limits_{\{u<v\}}(-w_1)d\mu.$$
			\end{theorem}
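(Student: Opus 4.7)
To prove Theorem~\ref{strong CP} I would reduce to Theorem~\ref{lem: compa 1} by an approximation argument. Since $u \in \NNP(H)$, pick $\psi \in \NNP$ with $\psi + H \leq u \leq H$; combined with $v \leq H$ this yields the pointwise bound $u - v \geq \psi$ on $\Omega$. The obstruction to applying Theorem~\ref{lem: compa 1} directly is twofold: $\psi$ may be unbounded below, so condition (ii) of Theorem~\ref{lem: compa 1} ($v \leq u + O(1)$) is not automatic, and condition (i) (a boundary inequality) is not immediate from the data.

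To handle (ii), I would replace $u$ by the approximating sequence $u_j := \max\{u, v - j\}$ for $j \in \mathbb{Z}^+$. Then $v \leq u_j + j$, and a short calculation gives $\{u_j < v\} = \{u < v\}$ (on the latter set, $v > u \geq v - j$ for $j > 0$, so $u_j$ equals either $u$ or $v - j$, both strictly below $v$). The sequence $u_j$ decreases to $u$, still satisfies $u_j \geq H + \psi$, and so the bound $u_j - v \geq \psi$ survives. For condition (i) applied to $(u_j, v)$, the estimate $u_j - v \geq \psi$ reduces the problem to a boundary property of $\NNP$, namely that $P[\psi] = 0$ forces $\liminf_{\Omega \setminus \{\psi = -\infty\} \ni z \to \xi_0} \psi(z) \geq 0$ at every $\xi_0 \in \partial\Omega$. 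To reconcile the exclusion set $\{v = -\infty\}$ appearing in (i) with the pluripolar set $\{\psi = -\infty\}$, I would replace $v$ by $v_\epsilon := (1 - 2\epsilon) v + \epsilon(u + \psi)$, whose polar set contains the union $\{v = -\infty\} \cup \{u = -\infty\} \cup \{\psi = -\infty\}$; the bound $u_j - v_\epsilon \geq (1 - 2\epsilon)\psi$ then gives the required boundary estimate on the set where all three functions are finite.

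Granted the Xing-type inequality for $(u_j, v_\epsilon)$ furnished by Theorem~\ref{lem: compa 1}, I would pass $\epsilon \to 0^+$ first --- so $v_\epsilon \to v$ off the pluripolar union, and the discrepancies vanish because the measures $T$, $\NP{v}$ and $\NP{u_j}$ do not charge pluripolar sets --- and then $j \to \infty$. The right-hand side is handled by plurifine locality of the non-pluripolar Monge-Amp\`ere measure: on the plurifine open set $\{u > v - j\}$ one has $u_j = u$, hence $\NP{u_j} = \NP{u}$ there, while on $\{u \leq v - j\}$ one has $u_j = v - j$ and hence $\NP{u_j} = \NP{v}$; the latter contribution vanishes in the limit since $\{u \leq v - j\} \searrow \{u = -\infty\}$ is pluripolar and not charged by $\NP{v}$. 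The left-hand side converges by monotone convergence because $u_j \searrow u$ on $\{u < v\}$ and $\{u_j < v\} = \{u < v\}$. The ``moreover'' statement involving $\mu$ follows identically from the corresponding part of Theorem~\ref{lem: compa 1}. The principal obstacle is establishing the boundary behavior of $\NNP$ alluded to above; this should follow from the envelope definition of $P[\psi]$ by testing against concrete admissible competitors, but the argument is delicate since a naive application yields information only about the USC-regularization of the defining supremum rather than a pointwise boundary limit of $\psi$ itself.
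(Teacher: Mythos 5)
Your reduction to Theorem~\ref{lem: compa 1} is the right high-level strategy, but the claim you flag as the ``principal obstacle'' is a genuine gap, and the asserted boundary property of $\NNP$ is in fact false. The condition $P[\psi]=0$ does \emph{not} force $\liminf_{\Omega\setminus\{\psi=-\infty\}\ni z\to\xi_0}\psi(z)\geq 0$ at every $\xi_0\in\partial\Omega$: already for $\Omega=\mathbb{D}\subset\C$, a Green potential $\psi$ of an absolutely continuous measure $\mu$ with $\int_{\mathbb{D}}(1-|z|)\,d\mu<\infty$ lies in $\mathcal{N}^a(\mathbb{D})\subset\NNP$ (its least harmonic majorant is $0$, and $\Delta\psi$ does not charge polar sets), yet by concentrating $\mu$ near a boundary point $\xi_0$ one can make $\limsup_{z\to\xi_0}\psi(z)<0$, or even produce sequences $z_k\to\xi_0$ with $\psi(z_k)\to-\infty$, while $\{\psi=-\infty\}=\emptyset$. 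The relation $P[\psi]=0$ is an envelope statement: it controls $(\sup\{\text{competitors}\})^*$, which is a regularized supremum over a family, and this gives no pointwise lower bound on the boundary values of $\psi$ itself --- exactly the objection you raise in your last sentence. Consequently condition (i) of Theorem~\ref{lem: compa 1} cannot be verified for the pairs $(u_j,v_\epsilon)$ along the route you propose.

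The paper avoids this entirely by modifying $v$ rather than $u$, and in a way that makes the boundary hypothesis hold for trivial reasons. Taking $\varphi\in\NNP$ with $H+\varphi\leq u\leq H$, and setting $V_j=\{d(\cdot,\partial\Omega)>2^{-j},\ \varphi>-2^j,\ H>-2^j\}$ and $\varphi_j=\bigl(\sup\{\psi\in\pshn:\psi\leq\varphi\text{ on }\Omega\setminus V_j\}\bigr)^*$, one has $\varphi_j=\varphi$ on the open set $\Omega\setminus\overline{V_j}$. Hence near $\partial\Omega$
$$u\geq H+\varphi = H+\varphi_j\geq v+\varphi_j,$$
which gives the boundary hypothesis for the pair $(u,\varphi_j+v)$ directly, with no pointwise boundary estimate on $\varphi$ ever needed; the bounded-difference hypothesis follows from $\varphi_j+v\leq u+2^{j+1}$ on $\overline{V_j}$. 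After applying Theorem~\ref{lem: compa 1} and Lemma~\ref{NP u+v > NP u}, passing $j\to\infty$ only requires knowing that $(\lim_j\varphi_j)^*$ is a model majorant of $\varphi$ (Proposition~\ref{propmodel} and Corollary~\ref{cor NP=0 implies model}), hence equals $P[\varphi]=0$, so $\varphi_j\nearrow 0$ quasi-everywhere. This uses the envelope content of $P[\varphi]=0$ exactly as given. To repair your proof you would have to replace the false boundary lemma with such an envelope approximation of $\psi$; once you do that, the auxiliary truncations $u_j=\max\{u,v-j\}$ and $v_\epsilon$ become unnecessary and you essentially recover the paper's argument.
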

			\begin{proof}
				We will use the same idea as in the proof of \cite[Theorem 3.1]{ACCP}. Recall that
				$$\mathcal{N}_{NP}(H)=\{w\in\pshn: \mbox{there exists } v\in\mathcal{N}_{NP} \mbox{ such that } v+H\leq w\leq H \},$$
				where $\mathcal{N}_{NP}$ is the set of negative plurisubharmonic functions $u$ satisfying $P[u]=0$.
				
				Let $\varphi\in\NNP$ such that $H\geq u\geq H+\varphi$.
				For every $j\in\Z^+$, we denote
				$$V_j=\{z\in\Omega:\ d(z,\partial\Omega)>2^{-j},\ \varphi(z)>-2^j,\ H(z)>-2^j\},$$
				and
				$$\varphi_j=(\sup\{\psi\in\pshn:\ \psi\leq\varphi\text{ on }\Omega\setminus V_j\})^*.$$
				Since $v\leq H$, we have, for every $j\in\Z^+$,
				\begin{equation}\label{Omega-V_j}
				u\geq H+\varphi=H+\varphi_j\geq\varphi_j+v \text{ on } \Omega\setminus \overline{V_j},
				\end{equation}
				which implies
				\begin{equation}\label{liminf partial Omega}
				\liminf\limits_{\Om\setminus N\ni z\to\partial\Omega}[u-(\varphi_j+v)]\geq 0,
				\end{equation}
			where $N=\{u=-\infty\}\subset\{\varphi+v=-\infty\}$.
				We also have,
				\begin{equation}\label{V_j}
				u\geq H+\varphi\geq -2^{j+1}\geq (\varphi_j+v)-2^{j+1} \text{ on } \overline{V_j}.
				\end{equation}
				By the inequalities (\ref{Omega-V_j}) and (\ref{V_j}), we have
				\begin{equation}\label{same singular}
				\varphi_j+v\leq u+2^{j+1} \text{ on }\Omega.
				\end{equation}
				By using the inequalities (\ref{liminf partial Omega}) and (\ref{same singular}), and applying Theorem \ref{lem: compa 1},
				 we have
				\begin{align*}
				\frac{1}{n!}\int\limits_{\{u<\varphi_j+v\}}(\varphi_j+v-u)^nT+&\int\limits_{\{u<\varphi_j+v\}}(-w_1)\NP{(\varphi_j+v)}\\
				\leq
				&\int\limits_{\{u<\varphi_j+v\}}(-w_1)\NP{u}.
				\end{align*}
				Hence, by Lemma~\ref{NP u+v > NP u}, we obtain
				$$\frac{1}{n!}\int\limits_{\{u<\varphi_j+v\}}(\varphi_j+v-u)^nT
				+\int\limits_{\{u<\varphi_j+v\}}(-w_1)\NP{v}
				\leq
				\int\limits_{\{u<\varphi_j+v\}}(-w_1)\NP{u}.$$
				Then, by the monotone convergence theorem, we have
				\begin{align*}
				\frac{1}{n!}\int\limits_{\{u<\lim\limits_{j\rightarrow\infty}\varphi_j+v\}}(\lim\limits_{j\rightarrow\infty}\varphi_j+v-u)^nT+&\int\limits_{\{u<\lim\limits_{j\rightarrow\infty}\varphi_j+v\}}(-w_1)\NP{v}\\
				\leq
				&\int\limits_{\{u<\lim\limits_{j\rightarrow\infty}\varphi_j+v\}}(-w_1)\NP{u}.
				\end{align*}
				By the same argument as in the proof of Proposition \ref{propmodel}, we have $\NP{(\lim\limits_{j\rightarrow\infty}\varphi_j)^*}=0$, and then it follows from Corollary
				\ref{cor NP=0 implies model} that $(\lim\limits_{j\to\infty}\varphi_j)^*$ is model. 
				Hence, by the condition $\varphi\in\NNP$ and the fact $\varphi\leq\varphi_j$ for every $j$, we have $(\lim\limits_{j\rightarrow\infty}\varphi_j)^*=0$ and hence $\lim\limits_{j\rightarrow\infty}\varphi_j=0$ outside a pluripolar set.
				It thus follows that
				$$\frac{1}{n!}\int\limits_{\{u<v\}}(v-u)^nT+\int\limits_{\{u<v\}}(-w_1)\NP{v}
				\leq
				\int\limits_{\{u< v\}}(-w_1)\NP{u}.$$
				Now, assume that $\NP{u} \leq \NP{v} + \mu$. Since $\NP{v}\leq\NP{(\varphi_j+v)}$, we have 
				$\NP{u} \leq \NP{(\varphi_j+v)} + \mu$. 
					By using the inequalities (\ref{liminf partial Omega}) and (\ref{same singular}),
					 and applying Theorem \ref{lem: compa 1},
					we have
					$$\frac{1}{n!}\int\limits_{\{u<\varphi_j+v\}}(\varphi_j+v-u)^nT
					\leq \int\limits_{\{u<\varphi_j+v\}}(-w_1)d\mu.$$
					Letting $j\rightarrow\infty$ and using the fact $(\lim\limits_{j\rightarrow\infty}\varphi_j)^*=0$, we obtain
					$$\frac{1}{n!}\int\limits_{\{u<v\}}(v-u)^nT\leq \int\limits_{\{u<v\}}(-w_1)d\mu.$$
					The proof is completed.
			\end{proof}
  Similar to Corollary \ref{cor: comparison 1}, we have the following result:
			\begin{corollary}\label{cor: comparison wrt NPMA}
				Let $H\in\pshn$ and $u,v\in\NNP(H)$. Assume that $\NP{u}\geq\NP{v}$. Then $u\leq v.$
			\end{corollary}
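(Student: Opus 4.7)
The plan is to obtain this as a direct corollary of Theorem \ref{strong CP}, argued in the same way as Corollary \ref{cor: comparison 1}. I would apply Theorem \ref{strong CP} with the roles of $u$ and $v$ exchanged: the theorem's function in class $\NNP(H)$ becomes our $v$, while the theorem's dominated function (the one merely satisfying $v\leq H$) becomes our $u$. Both hypotheses hold immediately, since $v\in\NNP(H)$ by assumption and the definition of $\NNP(H)$ forces $u\leq H$. Our hypothesis $\NP{u}\geq\NP{v}$ reads $\NP{v}\leq\NP{u}+\mu$ with $\mu=0$, so the second assertion of Theorem \ref{strong CP} yields
$$\frac{1}{n!}\int_{\{v<u\}}(u-v)^n\,dd^cw_1\wedge\cdots\wedge dd^cw_n\leq 0$$
for every choice of $w_j\in\text{PSH}(\Omega,[-1,0])$.

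Next, I would specialize the weights in order to extract pointwise information from this integral inequality. Setting $M=\sup_{\overline{\Omega}}\|z\|^2$ and $w(z)=(\|z\|^2-M)/M$, the function $w$ lies in $\text{PSH}(\Omega,[-1,0])$ and $(dd^cw)^n$ is a positive constant multiple of Lebesgue measure. Taking $w_1=\cdots=w_n=w$ in the displayed inequality then forces $(u-v)^n=0$ Lebesgue almost everywhere on $\{v<u\}$, so this set has Lebesgue measure zero.

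Finally, I would promote this almost-everywhere inequality to an everywhere inequality using the mean value property of plurisubharmonic functions: for every $z_0\in\Omega$,
$$u(z_0)=\lim_{r\to 0^+}\frac{1}{|B_r|}\int_{B_r(z_0)}u\,dV\leq\lim_{r\to 0^+}\frac{1}{|B_r|}\int_{B_r(z_0)}v\,dV=v(z_0),$$
so $u\leq v$ throughout $\Omega$. I do not foresee a serious obstacle; the only point requiring attention is verifying that the hypothesis $v\leq H$ of Theorem \ref{strong CP} is preserved after swapping $u$ and $v$, which is immediate from the definition of $\NNP(H)$.
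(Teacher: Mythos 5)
Your proposal is correct and follows essentially the same route the paper indicates (it states that this corollary follows ``similar to Corollary \ref{cor: comparison 1}''): swap the roles of $u$ and $v$ in Theorem \ref{strong CP}, note the swapped hypotheses hold because $\NNP(H)\subset\{w:w\leq H\}$, take $\mu=0$, and then upgrade the resulting almost-everywhere inequality $u\leq v$ to an everywhere inequality via the mean value property of plurisubharmonic functions. The specific choice $w(z)=(\|z\|^2-M)/M$ is a valid way to realize a multiple of Lebesgue measure as $(dd^c w)^n$ with $w\in\text{PSH}(\Omega,[-1,0])$, and the final limit-of-averages argument is the standard way to pass from almost-everywhere to pointwise.
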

	\section{Proofs of Theorem \ref{main} and Corollary \ref{cormain}}\label{secproof}
\subsection{Proof of Theorem \ref{main}} For the reader's convenience, we recall the statement of Theorem \ref{main}.
\begin{theorem}\label{MA w prescribed singularity}
	Assume that there exists $v\in\pshn$ such that $\NP{v}\geq\mu$ and $P[v]=\phi$. Denote 
	$$S=\{w\in\pshn: w\leq  \phi, \NP{w}\geq\mu\}.$$
	Then $u_S:=(\sup\{w: w\in S\})^*$ is a solution of the problem 
		\begin{equation}\label{NPMA1}
		\begin{cases}
		\NP{u}=\mu,\\
		P[u]=\phi,
		\end{cases}
		\end{equation}
	 Moreover, if there exists $\psi\in\NNP$ such that 
	$\NP{\psi}\geq\mu$ then $u_S$ is the unique solution of \eqref{NPMA1}.
\end{theorem}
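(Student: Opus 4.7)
The plan is to verify $u_S$ solves \eqref{NPMA1} in four stages: (i) show $u_S$ is well-defined with $\NP{u_S} \geq \mu$ and $u_S \leq \phi$, (ii) identify $P[u_S] = \phi$, (iii) establish the reverse mass bound $\NP{u_S} \leq \mu$---the main technical step---and (iv) deduce uniqueness from the Xing-type comparison Corollary \ref{cor: comparison wrt NPMA}.

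For (i) and (ii): the hypothesis $P[v] = \phi$ forces $v \leq \phi$, so $v \in S$, making $u_S$ well-defined with $v \leq u_S$. Choquet's lemma presents $u_S$ as the upper regularization of an increasing limit of finite maxima of elements of $S$; Lemma \ref{lem: NP of max} keeps such maxima in $S$, and Lemma \ref{lem: NP of limsup} then gives $\NP{u_S} \geq \mu$. Lemma \ref{lem upper reg} and Josefson's theorem upgrade $u_S \leq \phi$ from ``outside a pluripolar set'' to everywhere on $\Omega$. For the model envelope identification, monotonicity of $P[\cdot]$ combined with $v \leq u_S$ yields $\phi = P[v] \leq P[u_S]$, while $u_S \leq \phi$ together with $\phi$ being model by Theorem \ref{the: model}(i) gives $P[u_S] \leq P[\phi] = \phi$.

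The main obstacle is (iii). I would exhaust $\Omega \setminus \{\phi = -\infty\}$ by the open sets $U_j := \{z \in \Omega : d(z, \partial\Omega) > 2^{-j},\ \phi(z) > -2^j\}$, and for each $j$ apply Theorem \ref{the NPMA of envelope} with $U = U_j$, $H = \phi$, and target measure $\mathbb{1}_{U_j}\mu$ (the compatibility $\NP{v} \geq \mu \geq \mathbb{1}_{U_j}\mu$ being immediate) to produce
\[
\tilde u_j := \bigl(\sup\{w \in \pshn : w \leq \phi \text{ on } \Omega \setminus U_j,\ \NP{w} \geq \mathbb{1}_{U_j}\mu\}\bigr)^*
\]
with $\mathbb{1}_{U_j}\NP{\tilde u_j} = \mathbb{1}_{U_j}\mu$. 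Since $u_S$ (after Josefson absorption) belongs to the defining family, $u_S \leq \tilde u_j$. The crux is the reverse inequality $\tilde u_j \leq u_S$: my plan is to bootstrap candidates by replacing every $w$ in the envelope for $\tilde u_j$ with $\max(w, v)$, which leaves the sup unchanged because Lemma \ref{lem: NP of max} preserves the mass constraint (both $w$ and $v$ have non-pluripolar mass $\geq \mathbb{1}_{U_j}\mu$) and $v \leq \phi$ preserves the sign constraint on $\Omega \setminus U_j$. Hence the candidates all dominate $v$ and inherit good boundary decay, after which a Xing comparison argument via Theorem \ref{lem: compa 1} applied between $\tilde u_j$ and perturbations $u_S + \epsilon \rho$ (with $\rho$ a Josefson potential absorbing the pluripolar gap) forces $\tilde u_j \leq u_S$ on $U_j$. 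Letting $j \to \infty$, invoking Lemmas \ref{lem: NP of limsup} and \ref{thm NPddcu leq u>-infty mu}, and using that $\mu$ does not charge $\{\phi = -\infty\}$ (non-pluripolarity) then yields the global equality $\NP{u_S} = \mu$.

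For (iv), given $\psi \in \NNP$ with $\NP{\psi} \geq \mu$, any solution $u'$ of \eqref{NPMA1} satisfies $u' \leq P[u'] = \phi$, and Theorem \ref{the: model}(ii) combined with Lemma \ref{NP u+v > NP u} gives $\NP{\phi + \psi} \geq \NP{\phi} + \NP{\psi} \geq \mu = \NP{u'}$. The Xing comparison Theorem \ref{lem: compa 1} then forces $\phi + \psi \leq u'$, placing $u'$ (and symmetrically $u_S$) in $\NNP(\phi)$. Corollary \ref{cor: comparison wrt NPMA} applied to the pair $(u', u_S)$ with equal non-pluripolar measures finally yields $u' = u_S$.
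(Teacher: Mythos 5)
Your overall architecture matches the paper's: exhaust by the sets where $\phi$ (or $v+\phi$) is bounded, apply Theorem \ref{the NPMA of envelope} on each piece, pass to the limit, and close the loop with the Xing-type comparison. Stages (i) and (ii) are sound. But stages (iii) and (iv) each have a genuine gap.

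For (iii): your single-index family $\tilde u_j$ is not monotone. Increasing $j$ \emph{weakens} the boundary constraint ``$w\leq\phi$ on $\Omega\setminus U_j$'' (pushing $\tilde u_j$ up) while simultaneously \emph{strengthening} the mass constraint ``$\NP{w}\geq\mathbb{1}_{U_j}\mu$'' (pushing $\tilde u_j$ down), so neither inclusion between the defining families holds, and Lemmas \ref{lem: NP of limsup} and \ref{thm NPddcu leq u>-infty mu} cannot be invoked on the limit. This is precisely why the paper introduces the doubly-indexed family $u_{j,k}$: freezing the mass index $j$ and letting $k\to\infty$ gives a monotone family, then letting $j\to\infty$ gives a second monotone family. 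Your proposed escape route --- proving $\tilde u_j\leq u_S$ on $U_j$ via a Xing comparison --- is not supported by the available tools: every comparison statement in the paper (Theorem \ref{lem: compa 1}, Corollary \ref{cor: comparison 1}, Theorem \ref{strong CP}) is global on $\Omega$ and requires hypotheses such as $\liminf_{\partial\Omega}(u_S-\tilde u_j)\geq0$ and $\tilde u_j\leq u_S+O(1)$, which are the wrong way around (you already know $u_S\leq\tilde u_j$), or would need $\NP{u_S}\leq\NP{\tilde u_j}$, which is what you are trying to prove. In fact the paper never proves any pointwise inequality $u_{j,k}\leq u_S$: it takes the monotone limit $\overline{u}$, shows $\overline{u}\leq\phi$ and $\NP{\overline{u}}=\mu$, and then observes $\overline{u}\in S$, so $\overline{u}\leq u_S$ is automatic. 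That ``free'' mechanism is what your single-index scheme is missing.

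For (iv): your direct application of Theorem \ref{lem: compa 1} to the pair $(u',\phi+\psi)$ is circular. The hypotheses of that theorem are exactly $\phi+\psi\leq u'+O(1)$ (condition ii) and $\liminf_{z\to\partial\Omega}\bigl(u'(z)-(\phi+\psi)(z)\bigr)\geq0$ (condition i); but these encode precisely that $u'$ lies above $\phi+\psi$ up to a constant with good boundary behaviour, which is the conclusion you want. Nothing in ``$P[u']=\phi$'' gives a lower bound $u'\geq\phi+O(1)$, only the upper bound $u'\leq\phi$. The paper repairs this by regularizing the arbitrary solution $\mathfrak{u}$ to $\mathfrak{u}_j$ via the envelope that keeps $\mathfrak{u}$ outside the exhausting open set $V_j$ and is maximal inside; then $\mathfrak{u}_j+\psi\leq\mathfrak{u}+2^j$ (condition ii) and $\mathfrak{u}_j+\psi\leq\mathfrak{u}$ near $\partial\Omega$ (condition i) hold by construction, Corollary \ref{cor: comparison 1} yields $\mathfrak{u}_j+\psi\leq\mathfrak{u}$, and passing $j\to\infty$ (using $(\lim_j\mathfrak{u}_j)^*=P[\mathfrak{u}]=\phi$) gives $\phi+\psi\leq\mathfrak{u}$. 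You need some version of this intermediate regularization before you can legitimately place $u'$ in $\NNP(\phi)$ and invoke Corollary \ref{cor: comparison wrt NPMA}.
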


\begin{proof}  By the assumption, we have $v\leq u_S\leq \phi$ and $P[v]=\phi$. Therefore, $P[u_S]=\phi$.
	We need to show that 	$\NP{u_S}=\mu$.
	
	For every $j \geq 1,$ we denote 
\[
\Omega_j = \{ z \in \Om \colon d(z,\partial \Om) > 2^{-j}\},
\]
and
\[U_j = \{ z \in \Om_j \colon v + \phi > -2^j\}.
\]
We also define
\[
S_{j,k} =\{ w \in \pshn \colon w \leq \phi \text{ on } \Omega \setminus U_k, \NP{w} \geq \mathbb{1}_{U_j} \mu\} ,
\]
for all $k, j \geq 1$.
 It is easy to see that $v \in S_{j,k}$, hence $u_{j,k} := (\sup \{ w\in S_{j,k} \})^*$ is well-defined. Since $S\subset S_{j,k}$, we also
 have
 \begin{equation}\label{eq0.1main}
 u_S \leq u_{j,k}
 \end{equation}
Recall that 
\[
P[\phi] = \Big( \sup \{w \in \pshn \colon w  \leq \phi + O(1) \text{ on } \Om, \liminf\limits_{\Om \setminus \{ \phi = -\infty\} \ni z \to \xi} (\phi(z) - w(z)) \geq 0 \forall \xi \in \partial \Om\} \Big)^*.
\]
By the definition of $S_{j,k}$, we have $u_{j,k} \leq \phi$ on $\Om \setminus \overline{U_k}$ and  $ \phi \geq v + \phi \geq -2^k$
on $\overline{U_k}$. Hence,  $\phi + O(1) \geq u_{j,k}$ on $\Om$ and $\liminf\limits_{\Om \setminus  \{ \phi = -\infty\}\ni z \to \xi} (\phi(z) - u_{j,k}(z)) \geq 0$ for all $ \xi \in \partial \Om$.
 Consequently, we have, $u_{j,k} \leq P[\phi]$. Since $\phi$ is model, it follows that
\begin{equation}\label{eq: u_{j,k}}
 \quad u_{j,k} \leq \phi, \forall k, j \geq 1.
\end{equation}
Moreover, it follows from Theorem \ref{the NPMA of envelope} that
\begin{equation}\label{eq1main}
 \mathbb{1}_{U_k} \NP{u_{j,k}} = \mathbb{1}_{U_k}(\mathbb{1}_{U_j} \mu) = \mathbb{1}_{U_j} \mu,
\end{equation}
for every $k\geq j\geq 1$.

Note that if $j_1\leq j_2$ and $k_1\geq k_2$ then $S_{j_1, k_1}\leq S_{j_2, k_2}$. Hence
\begin{equation}\label{eq2main}
u_{j_1, k_1}\leq u_{j_2, k_2}, \, \forall j_1\leq j_2, k_1\geq k_2.
\end{equation}
Put
\[
u_j = (\lim\limits_{k \to \infty} u_{j,k})^*.
\]
It follows from \eqref{eq0.1main} and \eqref{eq: u_{j,k}} that 
\begin{equation}\label{eq3main}
u_S \leq u_{j} \leq \phi.
\end{equation}
In particular, $u_j\neq -\infty$.
By using \eqref{eq1main} and applying Lemmas \ref{lem: NP of limsup} and 
\ref{thm NPddcu leq u>-infty mu}, we get
\begin{equation}\label{eq: NPu_j = 1_U_j mu}
\quad \mathbb{1}_{U_j} \NP{u_j} = \mathbb{1}_{U_j}\mu, \forall j \geq 1.
\end{equation}

 It follows from \eqref{eq2main} that $(u_j)_{j \geq 1}$ is a decreasing sequence.
Set 
\[
\overline{u} = \lim\limits_{j \to \infty} u_{j}.
\]
By \eqref{eq3main}, we have
\begin{equation}\label{eq4main}
u_S \leq \bar{u} \leq \phi.
\end{equation}
 By using \eqref{eq: NPu_j = 1_U_j mu} and applying Lemmas \ref{lem: NP of limsup} and \ref{thm NPddcu leq u>-infty mu}, we deduce that 
 $$\mathbb{1}_{U_{j_0}} \NP{\overline{u}} = \mathbb{1}_{U_{j_0}} \mu,$$
 for every $j_0\geq 0$.
  Letting $j_0 \to \infty$, we obtain
\begin{equation}\label{eq: NP u ngang}
\mathbb{1}_{\bigcup\limits_{j \geq 1} U_j} \NP{\overline{u}} =\mathbb{1}_{ \bigcup\limits_{j \geq 1} U_j} \mu.
    \end{equation}
By definition, $\Om \setminus \bigcup\limits_{j \geq 1} U_j = \{ v + \phi = -\infty\}$ is a pluripolar set. Therefore, 
 \eqref{eq: NP u ngang} implies that 
\[
\NP{\overline{u}}= \mu.
\]
This combined with  \eqref{eq4main} gives
$$u_S \leq \bar{u} \leq (\sup\{w\in\pshn: w\leq  \phi, \NP{w}\geq\mu\})^*=u_S.$$
Hence, $u_S=\bar{u}$ and $\NP{u_S}= \mu$.
Thus, $u_S$ is a solution of \eqref{NPMA1}.

Now, assume that there exists $\psi \in \mathcal{N}_{NP}$ with $\NP{\psi} \geq \mu.$ We need to show that $u_S$ is the unique solution
of the problem \eqref{NPMA1}. Note that 
$v: = \psi + \phi$ satisfies the conditions $\NP{v} \geq \NP{\psi} \geq \mu$ and $P[v] = \phi.$ Hence  $u_S$ is a solution \eqref{NPMA1} satisfying $$\phi+\psi \leq u_S \leq \phi.$$ 
In particular $u_S \in \mathcal{N}_{NP}(\phi)$.

Let $\mathfrak{u}$ be an arbitrary solution of \eqref{NPMA1}. We will show that   $\mathfrak{u} \in \mathcal{N}_{NP}(\phi)$.

Denote
\[
V_j = \{ z \in \Om_j, \mathfrak{u} > -2^j \},
\]
and 
\[
\mathfrak{u}_j = \Big( \sup \{w \in \pshn \colon w \leq \mathfrak{u} \text{ on } \Om \setminus V_j \} \Big)^*.
\]
 By the same argument as in the proof of Proposition \ref{propmodel}, we have 
\begin{equation}\label{eq5main}
\left(\lim\limits_{j\to\infty}\mathfrak{u}_j\right)^*=P[u]=\phi.
\end{equation}
It is easy to see that
\begin{equation}\label{eq6main}
\mathfrak{u} \leq \mathfrak{u}_j,
\end{equation}
on $\Om$. Moreover, $\mathfrak{u}_j+\psi$ satisfying the conditions
\begin{itemize}
	\item $\mathfrak{u}_j+\psi\leq \mathfrak{u}_j=\mathfrak{u}\quad\mbox{on}\quad \Om\setminus\overline{V_j}$;
	\item $\mathfrak{u}_j+\psi\leq \mathfrak{u}_j\leq \mathfrak{u}+2^j\quad\mbox{on}\quad \overline{V_j}$;
	\item $\NP{(\mathfrak{u}_j+\psi)}\geq\NP{\psi}\geq\mu.$
\end{itemize}
 Then, it follows from Corollary \ref{cor: comparison 1} that
 \begin{equation}\label{eq7main}
 \mathfrak{u}_j+\psi\leq \mathfrak{u}.
 \end{equation}
 Combining \eqref{eq5main}, \eqref{eq6main} and \eqref{eq7main}, we get
 $$\phi+\psi=\left(\lim\limits_{j\to\infty}(\mathfrak{u}_j+\psi)\right)^*\leq \mathfrak{u}\leq 
 \left(\lim\limits_{j\to\infty}\mathfrak{u}_j\right)^*=\phi.$$
 In particular, $\mathfrak{u}\in\mathcal{N}_{NP}(\phi)$. By Corollary \ref{cor: comparison wrt NPMA}, we have $\mathfrak{u}=u_S$.
 Thus, $u_S$ is the unique solution of \eqref{NPMA1}.
 
 This finishes the proof.
\end{proof}
\subsection{Proof of Corollary \ref{cormain}}
In order to prove Corollary \ref{cormain}, we need the following lemma:
\begin{lemma}\label{lemDa}
	Let $u, v, h\in\mathcal{D}(\Om)$ such that $u+v\leq h$. Assume that $(dd^c u)^n$ and $(dd^c v)^n$ vanish on pluripolar sets. 
	Then $(dd^c h)^n$ vanishes on pluripolar set.
\end{lemma}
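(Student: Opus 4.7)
The plan is to reduce the statement to a heredity principle within $\mathcal{D}(\Om)$: if $f\leq g$ with $f,g\in\mathcal{D}(\Om)$ and $(\ddc f)^n$ puts no mass on pluripolar sets, then neither does $(\ddc g)^n$. This is essentially the content of \cite[Lemma 4.1]{ACCP} combined with Blocki's characterization \cite[Theorem 1.2]{Blo06}, and is exactly the mechanism already invoked in Remark~\ref{rmkF}(ii). Accepting this principle, the lemma follows by applying it to the pair $u+v\leq h$ in $\mathcal{D}(\Om)$, provided I first check that $u+v\in\mathcal{D}(\Om)$ and that $(\ddc(u+v))^n$ charges no pluripolar set. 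After subtracting suitable constants (which does not affect the Monge--Amp\`ere operator) I may freely assume $u,v,h\leq 0$.

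For the reduction, I would first truncate by setting $u_j=\max\{u,-j\}$ and $v_j=\max\{v,-j\}$, both locally bounded plurisubharmonic functions. Expanding by multilinearity,
\[
(\ddc(u_j+v_j))^n=\sum_{k=0}^n\binom{n}{k}(\ddc u_j)^k\wedge(\ddc v_j)^{n-k},
\]
and by the classical Bedford--Taylor theorem each mixed current on the right-hand side puts no mass on pluripolar sets, since it is a Monge--Amp\`ere product of locally bounded plurisubharmonic functions. Local boundedness in $j$ of the total mass $\int_K(\ddc(u_j+v_j))^n$ on any compact $K\Subset\Om$ follows from the Chern--Levine--Nirenberg inequality, using the extremal masses $\int_K(\ddc u_j)^n$ and $\int_K(\ddc v_j)^n$, which are locally bounded since $u,v\in\mathcal{D}(\Om)$. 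Blocki's characterization \cite[Theorem 1.2]{Blo06} then gives $u+v\in\mathcal{D}(\Om)$.

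To pass to the limit for the non-pluripolar property, I would fix $M>0$ and work on the plurifine open set $\{u+v>-M\}$, on which $u_j+v_j$ agrees with $u+v$ for every $j>M$. By the plurifine continuity of Bedford--Taylor products (\cite[Corollary 4.3]{BT87}, already used in the proof of Lemma~\ref{NP u+v > NP u}), the measures $\mathbb{1}_{\{u+v>-M\}}(\ddc(u_j+v_j))^n$ stabilize to $\mathbb{1}_{\{u+v>-M\}}(\ddc(u+v))^n$; each stabilized measure puts no mass on pluripolar sets, hence neither does the limit. Letting $M\to\infty$ and noting that $\{u+v=-\infty\}$ is itself pluripolar gives the claim for $u+v$, after which the heredity principle above transports the property to $h$.

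The main obstacle will be the careful bookkeeping in the truncation step: justifying that the mixed currents $(\ddc u)^k\wedge(\ddc v)^{n-k}$ for $0<k<n$ stabilize on the plurifine open sets $\{u+v>-M\}$ to a well-defined global Radon measure on $\Om$ that inherits the non-pluripolar property. The combination of Chern--Levine--Nirenberg, Blocki's characterization, and plurifine continuity is standard in this circle of ideas but needs to be executed with care; once accomplished, the final invocation of \cite[Lemma 4.1]{ACCP} to upgrade from $u+v$ to $h$ is immediate.
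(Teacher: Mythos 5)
Your final step — passing from $u+v$ to $h$ via \cite[Lemma 4.1]{ACCP} — matches the paper exactly, and your idea of reducing to the statement for $u+v$ is also what the paper does. Where you diverge is in how you certify that $(\ddc(u+v))^n$ charges no pluripolar set, and there the truncation argument has a genuine gap.

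The issue is that your plurifine-stabilization step only controls $(\ddc(u+v))^n$ on the set $\{u+v>-\infty\}$. Fixing $M$, working on $\{u+v>-M\}$, and letting $M\to\infty$ shows that $\mathbb{1}_{\{u+v>-\infty\}}(\ddc(u+v))^n$ puts no mass on pluripolar sets. But the statement to prove includes the pluripolar set $\{u+v=-\infty\}$ itself, and nothing in the truncation argument rules out $(\ddc(u+v))^n$ concentrating mass there. When you write ``letting $M\to\infty$ and noting that $\{u+v=-\infty\}$ is itself pluripolar gives the claim,'' you are assuming exactly the conclusion you need: that $(\ddc(u+v))^n$ assigns zero mass to that pluripolar set. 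Moreover, since $u+v\in\mathcal{D}(\Omega)$, the identity $\NP{(u+v)}=\mathbb{1}_{\{u+v>-\infty\}}(\ddc(u+v))^n$ already holds and by definition $\NP{(u+v)}$ never charges pluripolar sets; so the portion your argument does handle is automatic, and the real content of the claim is precisely the part that is left unaddressed. (There is also a secondary soft spot: the proposed proof that $u+v\in\mathcal{D}(\Omega)$ via Chern--Levine--Nirenberg runs into the problem that $\|u_j+v_j\|_{L^\infty}\to\infty$, so CLN does not directly give uniform local mass bounds; this has to be handled with more care, or bypassed as the paper does.)

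The paper closes both gaps at once by localizing to a hyperconvex domain so that $u,v$ fall into Cegrell's class $\mathcal{E}$ (where sums are automatically in the domain of the operator), and then invoking the H\"older-type inequality \cite[Lemma 4.4]{ACCP}
\[
\int_A (\ddc u)^k\wedge(\ddc v)^{n-k}\ \leq\ \Bigl(\int_A(\ddc u)^n\Bigr)^{k/n}\,\Bigl(\int_A(\ddc v)^n\Bigr)^{(n-k)/n}
\]
for an arbitrary Borel set $A$. Taking $A$ pluripolar (including subsets of $\{u+v=-\infty\}$), the right side vanishes by hypothesis, so every mixed term vanishes, hence $(\ddc(u+v))^n(A)=0$ by multilinearity. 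This is the ingredient that your argument is missing; without some quantitative control of mixed Monge--Amp\`ere masses on pluripolar sets, the truncation route does not reach the conclusion.
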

\begin{proof}
Since the problem is local, we can assume that $\Om$ is hyperconvex and $u, v, h$ are negative. 
In particular, $u, v, h\in\mathcal{E}(\Om)$ (see \cite[Theorem 4.5]{Ceg04} and \cite[Theorem 1.2]{Blo06}). 
Replacing $\Omega$ by a relative compact subset of $\Om$, 
we can also assume that $\int_{\Om} (dd^c w)^n<\infty$ for $w=u, v, h$. 

Let $A\subset\Om$ be a pluripolar set. By \cite[Lemma 4.4]{ACCP} and by the assumption $\int_A(dd^cu)^n=\int_A(dd^cv)^n=0$, we have
$$\int_A (dd^cu)^k\wedge(dd^c v)^{n-k}\leq \left(\int_A (dd^cu)^n\right)^{k/n}\wedge \left(\int_A (dd^cv)^n\right)^{(n-k)/n}=0,$$
for every $k=0, 1, ..., n$. Therefore,
$$\int_A(dd^c(u+v))^n=\sum_{k=0}^{n}\begin{pmatrix}
n\\
k
\end{pmatrix}\int_A(dd^cu)^k\wedge(dd^c v)^{n-k}=0.$$
Since $A$ is arbitrary, we have $(dd^c(u+v))^n$ vanishes on every pluripolar set. Thus, it follows from \cite[Lemma 4.1]{ACCP}
that $(dd^ch)^n$ vanishes on pluripolar sets.
\end{proof}
Now we begin to prove Corollary \ref{cormain}. We recall its statement for the reader's convenience.
	\begin{corollary}
		Assume that $\mu$ is a non-negative measure defined on $\Om$ by $\mu=(dd^c\varphi)^n$ for some $\varphi\in\mathcal{N}^a(\Om)$.
		Then, for every $H\in\mathcal{D}(\Om)$ with $(dd^c H)^n\leq \mu$, there exists a unique function $u\in\mathcal{N}^a(H)$ such that
		$(dd^cu)^n=\mu$ on $\Om$.
	\end{corollary}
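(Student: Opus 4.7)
The plan is to construct the solution as the Perron-type envelope
\[
u := \Big(\sup\{w \in \pshn : w \leq H,\ \NP{w} \geq \mu\}\Big)^*,
\]
and to verify its properties using the machinery of the paper. Since $\varphi \in \mathcal{N}^a$, the measure $\mu = (dd^c\varphi)^n$ vanishes on pluripolar sets, so $\NP{\varphi} = \mu$. By Lemma \ref{NP u+v > NP u}, $\NP{(H + \varphi)} \geq \NP{\varphi} = \mu$, and since $H + \varphi \leq H$ the function $H + \varphi$ belongs to the defining family, so $u \geq H + \varphi$. Upper semicontinuity of $H$ propagates through the upper regularization to give $u \leq H$. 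Thus $H + \varphi \leq u \leq H$, with witness $\varphi \in \mathcal{N}^a$, showing $u \in \mathcal{N}^a(H)$; Lemma \ref{lemDa} applied to this sandwich further yields $u \in \mathcal{D}(\Omega)$ with $(dd^c u)^n$ vanishing on pluripolar sets, so $(dd^c u)^n = \NP{u}$.

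By Lemma \ref{cor envelop}, $\NP{u} \geq \mu$. The essential remaining step is the reverse inequality $\NP{u} \leq \mu$, which I would prove by mimicking the envelope argument from the proof of Theorem \ref{main}. Introduce the plurifine exhaustion $U_j = \{z \in \Omega_j : \varphi(z) > -2^j\}$ and, for $k \geq j$, the constrained envelopes
\[
u_{j,k} := \Big(\sup\{w \in \pshn : w \leq \max\{H,-k\},\ \NP{w} \geq \mathbb{1}_{U_j}\mu\}\Big)^*.
\]
A variant of Theorem \ref{the NPMA of envelope} in which the upper bound on $w$ holds throughout $\Omega$ (not just outside a plurifine open set) should yield $\mathbb{1}_{U_j}\NP{u_{j,k}} = \mathbb{1}_{U_j}\mu$. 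Passing $k \to \infty$ and then $j \to \infty$, controlled by Lemmas \ref{lem: NP of limsup} and \ref{thm NPddcu leq u>-infty mu} together with the upper-regularization step of Lemma \ref{lem upper reg} and Josefson's theorem, exactly as in the proof of Theorem \ref{main}, one identifies the resulting limit with $u$. Since $\mu$ does not charge the pluripolar set $\{\varphi = -\infty\}$, this gives $\NP{u} = \mu$ on all of $\Omega$.

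Uniqueness is an application of Corollary \ref{cor: comparison wrt NPMA}: any other $u' \in \mathcal{N}^a(H)$ with $(dd^c u')^n = \mu$ lies with $u$ in $\NNP(H)$ (since $\mathcal{N}^a \subset \NNP$), and applying the corollary to the pairs $(u,u')$ and $(u',u)$ forces $u = u'$. The main obstacle is the adaptation of Theorem \ref{the NPMA of envelope} mentioned above, where the upper constraint must be imposed globally rather than only outside a plurifine open set; this requires reworking the balayage on small open balls inside $U$ while keeping the bound $w \leq H$ active. An alternative route is to apply Theorem \ref{main} with $\phi := P[H]$ and $v := H + \varphi$, using the identity $P[H+\varphi] = P[H]$ for $\varphi \in \NNP$ (analogous to the identity $P[\psi+\phi] = \phi$ invoked in the uniqueness proof of Theorem \ref{main}), to produce $u_S \in \NNP(P[H])$ with $\NP{u_S} = \mu$, and then to compare $u$ with $u_S$ via the Xing-type principles Corollary \ref{cor: comparison 1} and Theorem \ref{strong CP} to obtain $\NP{u} = \NP{u_S} = \mu$.
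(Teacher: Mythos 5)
Your primary construction, the envelope over $\{w\in\pshn:\ w\le H,\ \NP{w}\ge\mu\}$, has the gap you yourself flag: Theorem \ref{the NPMA of envelope} only imposes $w\le H$ \emph{outside} the plurifine open set $U$, and this is not a cosmetic detail. The balayage step in Theorem \ref{the: NPMA eq on plurifine open} replaces $w$ on small balls $B\Subset U$ by a lift $\overline{u}_B$ built from a Dirichlet problem on $B$, and this lift is controlled by the maximal majorant $u_B$ of $u$ on $B$, not by $H$; nothing forces $\overline{u}_B\le H$ on $B$. So a ``global constraint'' version of the envelope theorem is not a routine adaptation, and without it you cannot prove $\NP{u}\le\mu$ along this route.

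Your alternative route is much closer to what the paper actually does, but it still has a missing step, and the comparison you propose is against the wrong pair. Applying Theorem \ref{main} with $\phi=P[H]$ and $v=H+\varphi$ (using $P[H+\varphi]=P[H]$, which is fine) gives a unique $u_S$ with $\NP{u_S}=\mu$, $P[u_S]=\phi$ and $\phi+\varphi\le u_S\le\phi$. Comparing $u$ with $u_S$ only reconfirms $u\le u_S$, since your defining family is contained in the one from Theorem \ref{main}. What is needed, and what the paper supplies, is the inequality $u_S\le H$: the paper observes that $H$ itself solves the Theorem \ref{main} problem with right-hand side $\nu=(dd^cH)^n$ and the same $\phi$ (because $(dd^cH)^n\le\mu$ forces $\nu$ to be non-pluripolar and $P[H]=\phi$ by definition), and then uses $\mu\ge\nu$ to get $u_S\le H$, either by monotonicity of the Perron envelope in the measure or by Corollary \ref{cor: comparison wrt NPMA} applied to the pair $(u_S,H)$ in $\NNP(\phi)$. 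Once $u_S\le H$ is in hand, $u_S$ lies in your family, so $u=u_S$, and then the sandwich $H+\varphi\le u\le H$, membership in $\mathcal{N}^a(H)$, and your uniqueness argument via Corollary \ref{cor: comparison wrt NPMA} all go through exactly as you wrote. In short: your uniqueness part and the $\mathcal{N}^a(H)$-sandwich are correct, but the existence part is missing the one nontrivial comparison, $u_S\le H$, which must involve $H$ (not just $P[H]$) and its own Monge--Amp\`ere measure $\nu$.
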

\begin{proof}
	Put $\phi=P[H]$ and $u=\left(\sup\{w: w\in S\}\right)^*$, where
	$$S=\{w\in\pshn: P[w]=\phi, (dd^cw)^n\geq\mu\}.$$
	Since $\phi+\varphi\in S$, we have
	\begin{equation}\label{eq1cormain}
	u\geq \phi+\varphi\geq H+\varphi.
	\end{equation}
		By the definitions of $\mathcal{N}^a$ and $\NNP$, we have $\mathcal{N}^a\subset\NNP$. In particular, $\varphi\in\NNP$.
	Then, it follows from Theorem \ref{main} that $u$ is the unique solution to the problem
	\begin{equation}\label{eq1.1cormain}
	\begin{cases}
	\NP{w}=\mu,\\
	P[w]=\phi.
	\end{cases}
	\end{equation}
	Moreover, it follows from \cite[Theorem 1.2]{Blo06} and Lemma \ref{lemDa} that $u\in\mathcal{D}(\Om)$ and $(dd^cu)^n$ vanishes on
	pluripolar sets. Hence, we have
	\begin{equation}\label{eq2cormain}
	(dd^cu)^n=\mu.
	\end{equation}
	Denote $\nu=(dd^cH)^n$. Then $H$ is a solution of the problem
	\begin{equation}\label{eq3cormain}
	\begin{cases}
	\NP{w}=\nu,\\
	P[w]=\phi.
	\end{cases}
	\end{equation}
	Moreover, by Theorem \ref{main}, the problem \eqref{eq3cormain} has a unique solution. Hence
	\begin{equation}\label{eq4cormain}
	H=\left(\sup\{w\in\pshn: P[w]=\phi, (dd^cw)^n\geq\nu \}\right)^*\geq u.
	\end{equation}
	Combining \eqref{eq1cormain} and \eqref{eq4cormain}, we get $u\in\mathcal{N}^a(H)$. This combined with \eqref{eq2cormain} gives
	that $u$ is a solution of the problem
	\begin{equation}\label{eq5cormain}
	\begin{cases}
	w\in\mathcal{N}^a(H),\\
	(dd^cw)^n=\mu.
	\end{cases}
	\end{equation}
	It remains to show the uniqueness of solution of the problem \eqref{eq5cormain}. Assume that $v$ is a solution of
	 \eqref{eq5cormain}. Then there exists $\psi\in\mathcal{N}^a$ such that 
	 $$H+\psi\leq v\leq H.$$
	 Since $\mathcal{N}^a\subset\NNP$, it follows that
	 $$P[H]=P[H]+P[\psi]\leq P[H+\psi]\leq P[v]\leq P[H].$$
	 Then $P[v]=P[H]=\phi$. Moreover, since $\mu=(dd^c\varphi)^n$ vanishes on pluripolar sets,
	  the condition $(dd^cv)^n=\mu$ implies that $\NP{v}=\mu$. Hence, $v$ is a solution of the problem \eqref{eq1.1cormain}.
	  By the uniqueness of solution of \eqref{eq1.1cormain}, we have $v=u$. Thus, $u$ is the unique solution of \eqref{eq5cormain}.
	  
	  The proof is completed.
\end{proof}

	\noindent
	\textbf{Acknowledgments.}
Thai Duong Do was supported by the MOE grant (Singapore) under grant number MOE-T2EP20120-0010. Hoang-Son Do was supported in part by the Vietnam National Foundation for Science and Technology Development (NAFOSTED) under grant number 101.02-2021.16.\\

\noindent
\textbf{Conflict of interest.} The authors declare that there is no conflict of interest.\\
	
\end{document}